\DeclareMathOperator\DD{\textrm{\foreignlanguage{russian}{D}}}
\theoremstyle{plain}
\newtheorem{main}{Theorem}
\newtheorem{assumption}{Assumption}
\newtheorem{theorem}{Theorem}[section]
\newtheorem{lemma}[theorem]{Lemma}
\newtheorem{proposition}[theorem]{Proposition}
\newtheorem{corollary}[theorem]{Corollary}
\theoremstyle{remark}
\newtheorem{remark}[theorem]{Remark}
\newtheorem{definition}{Definition}
\newcommand\numberthis{\addtocounter{equation}{1}\tag{\theequation}}
\providecommand{\dotminus}{\mathbin{\mathpalette\xdotminus\relax}}
\newcommand{\xdotminus}[2]{%
  \ooalign{\hidewidth$\vcenter{\hbox{$#1\dot{}$}}$\hidewidth\cr$#1-$\cr}%
}
\newcommand{\diam}{\operatorname{diam}}
\newcommand{\ess}{\operatorname{ess}}
               \def\cal{\mathcal}
           \def\ea{\end{array}}
          \def\ec{\end{center}}
     \def\ed{\end{description}}
        \def\ee{\end{equation}}
       \def\eea{\end{eqnarray}}
     \def\eeaa{\end{eqnarray*}}
 \def\et{\end{thebibliography}}
\def\bib{\bibitem}
\def\U{{\cal U}}
\def\q{\quad}
\def\Orb{{\rm Orb}}
\def\Lip{\operatorname{Lip}}
\def\cG{{\mathcal G}}
\def\cA{{\mathcal A}}
\def\cC{{\mathcal C}}
\def\cI{{\mathcal I}}
\def\cR{{\mathcal R}}
\def\cB{{\mathcal B}}
\def\cR{{\mathcal R}}
\def\II{{\mathbb{I}}}
\def\RR{{\mathbb R}}
\def\PP{{\mathbb P}}
\def\NN{{\mathbb N}}
\def\EE{{\mathbb E}}
\def\vp{{\varphi}}
\def\bM{{\bf{M}}}
\title[EVL and entry time]{Rare event process and entry times distribution for arbitrary null sets on compact manifolds}
\author{Fan Yang}
\date{\today}
\address{Department of Mathematics, University of Oklahoma, Norman, Oklahoma, USA.}
\email{fan.yang-2@ou.edu}
\begin{document}

\begin{abstract} We establish the general equivalence between rare event process for arbitrary continuous functions whose maximal values are achieved on non-trivial sets, and the entry times distribution for arbitrary measure zero sets. We then use it to show that the for differentiable maps on a compact Riemannian manifold that can be modeled by Young's towers, the rare event process  and the limiting entry times distribution both converge to compound Poisson distributions. A similar result is also obtained on Gibbs-Markov systems, for both cylinders and open sets. We also give explicit expressions for the parameters of the limiting distribution, and a simple criterion for the limiting distribution to be Poisson. This can be applied to a large family of continuous observables that achieve their maximum on a non-trivial set with zero measure. 
\end{abstract}

\maketitle

\tableofcontents

\section{Introduction}
The extreme value theory and its relation with entry/return times statistics have been a hot topic for the last decade. For a given potential function, one observes the occurrence of extreme phenomenon, when the observation of the potential along the underlying dynamical systems achieves very high value. When the maximal value of the potential is achieved at a generic point, the extreme value distribution is known to converge to one of the three limiting laws (Gumbel, Fr\'echet, or Weibull distribution, all of which are of the form $e^{-\tau}$), which agrees with the classical extreme value theory. We invite the reader to the book~\cite{FFFH} for more details. However, when the maximal value is achieved on a periodic point, one will pick up a point mass at the origin. This is because the periodic behavior will generate a cluster of exceedances, which will prevent generic points from entering its neighborhoods. It is then shown in~\cite{FFT13} that for non-periodic points, the total number of exceedances within a time scale suggested by Kac's theorem is Poissonian in limit, while for periodic points the limiting distribution is {\em compound Poisson}. In particular, the compound part is a geometric distribution, with parameter $\theta$ given by the portion of points that remains in the neighborhood under the iteration of $f^m$ where $m$ is the period. This is generally known as the P\'olya-Aeppli distribution, and the parameter $\theta$ is sometimes called the {\em extremal index}. 

For the limiting distribution of entry/return times, Pitskel~\cite{P91} proved that for Markov chains, the number of entries to cylinder neighborhoods around a generic point is Poissonian. This result is later generalized to systems with various kind of mixing properties, see for example~\cite{A06}. The same result holds for geometric balls when the map is modeled by Young's towers, which is proven by Collet, Chazottes~\cite{CC} for towers with exponential tails, and Haydn, Wasilewska~\cite{HW15}, Pene, Saussol~\cite{PS} for polynomial tails. In the case of periodic points, it is shown in~\cite{HV09} that the number of returns is close to a P\'olya-Aeppli distribution. 

It is not a coincidence that extreme value distributions and entry times distributions agree for  both non-periodic and periodic points. This is proven in~\cite{FFT10}, where the authors show that these distributions are equivalent if one considers potential functions that have certain symmetry and regularity near the maximal value. 

An important yet very difficult step forward is to study the entry/exceedance distribution for the neighborhoods of any measure zero set. One of the key motivations lies in the {\em shortest distance between different orbits}, which is studied in~\cite{BLR}. If one defines $\phi(x,y) = -\log \min_{0\le k\le n-1} \{d(f^kx, f^ky)\}$ which is the shortest distance between the orbits segments of $x$ and $y$ before time $n$, then $\phi$ can be seen as a potential function on the product system $f\times f:\bM\times\bM \to \bM\times\bM$ which achieves its maximal value (infinity) along the diagonal $\{(x,x):x\in\bM\}$. Then to study the distribution property of $\phi$, one is forced to look at the entry/return times to the diagonal under the product system. Another motivation is given in~\cite{FFRS}, where the authors study the extreme value distribution near a Cantor set. 

One of the most important advances in this direction is  in~\cite{FFM}, where it is shown that the {\em marked rare event point process} (i.e., one considers not only the number of exceedances, but also the spatial position where such exceedances happen) will converge to the compound Poisson process with intensity $\theta$ and multiplicity d.f. $\pi$, under the assumption that: 
\begin{enumerate}
\item the thresholds $\{u_n\}$ are taken such that the measure of $\{X_0> u_n\}$ is of order $1/n$; here $X_0 = \varphi$ is the potential function (below we will refer to it as the observable);
\item there is a normalizing sequence $\{a_n\}$ and $\theta\in [0,1]$, a probability distribution $\pi(x)$ such that 
$$
\lim_n\frac{\PP(R_{p,0}(u_n, x/a_n))}{\PP(U_mn)} = \theta(1-\pi(x));
$$ 
\item two technical conditions $\DD_p(u_n)^*$ and $\DD_p'(u_n)^*$ hold;
\item the Condition $ULC_p(u_n)$ (Unlikely Long Clusters).
\end{enumerate}
More importantly, the conditions $\DD_p(u_n)^*$ and $\DD_p'(u_n)^*$ can be verified if one assumes that the system has decay of correlations against $L^1$ functions. However, this is known to be a very strong assumption as it implies the decay of correlations against $L^\infty$ functions at exponential speed. In the meantime, it is unclear when the intensity, $\theta$  and the distribution, $\pi(x)$ exist.

A more recent breakthrough is obtained in~\cite{HV19}. In this paper, the authors establish the existence of the parameters $\theta$ and $\{\lambda_\ell\}$ for the compound Poisson distribution using the short return probabilities $\{\hat{\alpha}_\ell\}$; they also prove the convergence of the entry times distribution using a{\em compound binomial} approximation theorem. One of the key ingredients in the proof is the desynchronization of the neighborhoods $U_n$ with the cut-off of the short return time $K$ (previously, the short return time depends on $n$. See equations~\eqref{e.3} and~\eqref{e.beta}). This allows them to easily show convergence without worrying about the meaning of a `short' return. Then, they consider a family of systems that are `mostly' hyperbolic\footnote{It is likely that such maps are, in fact, modeled by Young's towers with polynomial tails; see~\cite{AA} and~\cite{AL}.}, in the sense that:
\begin{enumerate}[label=(\alph*)]
\item the stable and unstable disks are globally defined;
\item the contraction/expansion/distortion on such disks are `good' except on a set with small measure; 
\item the measure can be globally decomposed into conditional measures along the unstable disks;
\item the system has polynomial decay of correlations.
\end{enumerate}

In this paper, we will consider both cylinders and open neighborhoods around an arbitrary null set. The main goal is to establish the convergence of the (unmarked) rare event process\footnote{Using the word `process' may be a slight exaggeration, as we will only show the convergence of the limiting distribution instead of the convergence of entire process. However, we believe that such convergence can be obtained by modifying the compound binomial approximation theorem in~\cite{HV19} (i.e., show the approximation by the compound binomial {\bf process}) following the work of~\cite{FFT13,FFM}, which will probably require a standalone paper.} to the compound Poisson distribution, for maps that are either Gibbs-Markov or  modeled by Young's towers. Note that we do not assume how the measure of such neighborhoods approach zero, nor do we impose any condition such as  $ULC(u_n)$.
Following the work of~\cite{HV19}, the parameters $\{\lambda_\ell\}$ will be determined explicitly by the short return probabilities of such neighborhoods. We will demonstrate how to control the error term in the compound binomially approximation theorem (which are, unsurprisingly, very similar to the conditions $\DD_p^*$ and $\DD_p'^*$ in~\cite{FFM}), using either $\phi$-mixing or decay of correlations against $L^\infty$ functions, both at polynomial speed. 

We also obtain several approximation results under general settings (Lemma~\ref{l.5.5}, \ref{l.5.6}) along the way, which allows one to approximate open neighborhoods with cylinders.
As a corollary, we provide an easy-to-check criterion for the limiting distribution to be Poisson. We also show that for potential functions achieving their maximal value on a null set, the extreme value distribution converges to $e^{-\alpha_1\tau}$ (with $\alpha_1$ being the extremal index). 

The secondary products of our proof are Proposition~\ref{p.1} and Proposition~\ref{p.2}, where we show that whether or not one synchronizes $K$ with $n$ will not affect the parameter of the limiting distribution. This, in particular, proves that the $\alpha_1$ defined by~\eqref{e.4} below is indeed the extremal index studied in~\cite{FFRS}.

We do not aim to provide specific examples in this paper, as it has been shown in~\cite{AFLV} that every system with sufficient decay of correlations must admits Young's tower. On the other hand, computing the parameters are usually lengthy (see, for instance, those examples in~\cite{GHV,HV19,FFM,FFRS}), and will be carried out in a standalone paper.

\section{Statement of results}\label{s.2}

A random variable $W$ is {\em compound Poisson distributed}, if there exists i.i.d. random variables $Z_j,j=1,2,\ldots$ taking value in positive integers, and an independent Poisson distributed random variable $P$, such that $W=\sum_{j=1}^{P}Z_j$. In other words, the number of occurrences within each time interval can be partitioned into independent clusters, whose total number follows a Poisson distribution while the number of occurrences within each cluster is distributed according to $Z_j's$. If we set $\lambda_\ell=\PP(Z_j=\ell),\ell=1,2,\ldots$, then we say that $W$ is a compound Poisson distributed for the parameters $\{\lambda_\ell\}$. More details on the compound Poisson distribution will be given in Section~\ref{s.3}.

Throughout this paper, unless otherwise specified, we will assume that $(\bM, \cB, \mu, f)$ is a measure preserving systems with $\bM$ a compact Riemann manifold, $f:\bM\to \bM$ a differentiable map, $\cB$ the Borel $\sigma$-algebra and $\mu$ an $f$-invariant probability measure. We will frequently write $\PP=\mu$ when we interpret $\mu(A)$ as the probability of the event $A$. 

We take a continuous observable (potential function)
$$\vp:\bM\to\RR \cup \{\pm\infty\},$$
such that the maximal value of $f$ (which could be positive infinite) is achieved on a $\mu$  measure zero closed set $\Lambda$, and consider the process generated by the dynamics of $f$ and the observable $\varphi:$
$$
X_0=\varphi,\,X_1=\varphi\circ f,\,\ldots, X_k = \varphi\circ f^k
,\ldots.$$
Let $\{u_n\}$ be a non-decreasing sequence of real numbers and $\{w_n\}$ a non-decreasing sequence of integers with $u_n\to\sup f$ and $w_n\to\infty$, such that 
\begin{equation}\label{e.1}
w_n\PP(X_0>u_n)\to \tau \in \RR^+ \mbox{ as }n\to\infty
\end{equation}
for some positive real number $\tau$. We will think of $u_n$ as a sequence of thresholds, and the event $\{X_k>u_n\}$
marks an exceedance above the threshold $u_n$. Also denote by $U_n$ the open set
$$
U_n: = \{X_0>u_n\}.
$$

We are interested in the total number of such exceedances before time $N$. To this end, we define, for integers $n$ and $N$,
$$
\xi^N_{u_n}(x) = \sum_{k=0}^{N-1} \II_{\{X_k>u_n\}}(x),
$$
where $\II_U$ is the indicator function of the set $U$.
This is known as the {\em rare event process} in~\cite{FFT13}, under the special case $w_n = n$.

To characterize the limiting distribution of $\xi^N_{u_n}$ as $n\to\infty$ we first observe that since $\{u_n\}$ is non-decreasing and $f$ is continuous, we have $U_n\subset U_{n-1}$, and
$$
\cap_n U_n = \Lambda.
$$
It then follows that $\mu(U_n)\searrow 0=\mu(\Lambda).$ Furthermore, \eqref{e.1} means that the measure of $U_n$ is of the order $\tau/w_n$.  

To state the parameters of the compound Poisson distribution, we assume that the following limits  exist for $K$ large enough and every $\ell\ge 1:$
\begin{equation}\label{e.3}
\hat\alpha_\ell = \lim_{K\to\infty}\lim_{n\to\infty}\mu_{U_n}(\tau^{\ell-1}_{U_n}\le K),
\end{equation}
where $\mu_{U_n}(\tau^{\ell-1}_{U_n}\le K)$ is the conditional probability of having at least $(\ell-1)$ returns to $U_n$ before time $K$.  We will see later that one only need to assume that the limit in $n$ exists, since $\hat\alpha(K) := \lim_{n\to\infty}\mu_{U_n}(\tau^{\ell-1}_{U_n}\le K)$ is monotonic in $K$. See the discussion in Section~\ref{ss.3.1}.

Then we put for every integer $\ell >0$ and $K>0$, 
\begin{equation}\label{e.2}
\lambda_\ell(K,U_n) = \frac{\PP(\sum_{i=0}^{2K}\II_{U_n}\circ f^i=\ell)}{\PP(\sum_{i=0}^{2K}\II_{U_n}\circ f^i\ge 1)}.
\end{equation}
In other words, $\lambda_\ell(K,U_n)$ is, conditioned on having an entry to the set $U_n$, the probability to have precisely $\ell$ entries in a cluster with length $2K+1$.

We will see later that the existence of the limits defining $\hat\alpha_{\ell}$ implies the existence of the following limits:
\begin{equation}\label{e.3'}
\lambda_\ell= \lim\limits_{K\to+\infty}\lim\limits_{n\to\infty}\lambda_\ell(K,U_n),
\end{equation}
and
\begin{equation}\label{e.4}
\alpha_1 = \lim_{K\to\infty}\lim_{n\to\infty}\mu_{U_n}(\tau_{U_n}>K).
\end{equation}
The real number $\alpha_1\in(0,1)$ is generally known as the {\em extremal index} (EI). See Freitas et al~\cite{FFT13}.

More importantly, assuming the existence of $\{\hat\alpha_{\ell}\}$, we will see that $\{\lambda_\ell\}$ satisfies $\sum_{\ell}\lambda_\ell=1$ (thus can be realized as the distribution of some random variable $X_0$), and can be explicitly determined using $\{\hat{\alpha}_{\ell}\}$. The relation between these sequences can be found in Section~\ref{ss.3.1}, in particular, Theorem~\ref{t.1}. 

Next, we turn our attention to the nested sequence $\{U_n\}$. In the most general setting, the geometry of the set $U_n$ can be quite bizarre. To deal with this issue, we will make the following assumption on the shape of $U_n$. For each $r_n>0$, we approximate $U_n$ by two open sets (`o' and `i' stand for `outer' and `inner'):
$$
U^o_n = \bigcup_{x\in U_n} B_{r_n}(x), \mbox{\,\, and \,\,} U^i_n = U_n\setminus \left(\bigcup_{x\in \partial U_n}\overline{B_{r_n}(x)}\right).
$$
It is easy to see that 
$$
\overline{U^i_n}\subset U_n\mbox{ and } \overline{U_n}\subset  U^o_n,
$$
with 
$$
d(\overline{U^i_n},(U_n)^c)\ge r_n, \mbox{ and \,}d(\overline{U_n},(U^o_n)^c)\ge r_n. 
$$
The following assumption requires $U_n$ to be {\em well approximable} by $U^{i/o}_n$. 
\begin{assumption}\label{a.1}
There exists a positive, decreasing sequence of real numbers $\{r_n\}$ with $r_n\to0$ (whose rate will be specified later, see Theorem~\ref{m.5}), such that
\begin{equation}\label{e.5}
\mu\left(U^o_n\setminus U^i_n\right) = o(1)\mu(U_n).
\end{equation}
\end{assumption}
Here $o(1)$ means the term goes to zero under the limit $n\to\infty$. This also applies to the rest of the paper.

We will also impose an assumption on the topological boundary of $U_n$. 
\begin{assumption}\label{a.2}
The sets $U_n$ have `small boundaries', in the sense that for $r$ small enough (but doesn't need to be too small, depending on $n$), $\mu(B_r(U_n)) = \mu(U_n)+F(r)$ where $B_r(U_n) = \bigcup_{x\in U_n} B_{r}(x)$, and $F(r)$ is a function of $r$ with $F(r)\to 0 $ as $r\to 0$ (with certain rate that will be specified later, see Theorem~\ref{m.4} and~\ref{m.5}).
\end{assumption}

Next, we have to assume that the set $\{U_n\}$ consists mainly of `good points', in the sense that the tail of the tower has small measure in $U_n$. This assumption is more technical and as a result, the precise statement will be postponed to Section~\ref{s.5} (See the statement of Theorem~\ref{m.5}).

\begin{assumption}\label{a.3}\footnote{A similar condition is verified for geometric balls in~\cite{CC}, see in particular the appendix there. The proof uses the Besicovitch covering lemma, which clearly does not hold for arbitrary open sets. Therefore we state it as a technical assumption. }
There exists $K_0>0$ and $p''> 1$, such that for every $n$ large enough and every $K_0<k<w_n$, there is $0\le s(k)\ll  k/2$, such that the set (for the precise definition, see~\eqref{e.tilde}):
\begin{align*}
\tilde{\Omega}_i:= \{x\in\Omega_{0,i}: \mbox{ the last visit to }\Omega_0\mbox{ before time }k \mbox{ is in } \Omega_{0,m} \mbox{ with } R_m<s(k)\}
\end{align*}
satisfies
$$G(k):=\frac{\sum_i\sum\limits_{j=0}^{R_i}\mu_0(f^{-j}U_n\cap (\Omega_{0,i}\setminus \tilde{\Omega}_i))}{\mu(U_n)}\le C k^{-p''}.$$  
\end{assumption}

Finally, if $f$ is invertible, we will make the following additional assumption on the conditional measure of $U_n$.

\begin{assumption}\label{a.4}\footnote{This assumption can be weakened so that $\mu_{\gamma^u}(f^{-b}U_n\cap\Omega_0)\le C\mu(U_n)$ holds for all $\gamma$ except on a sequence of sets whose measures (with respect to the transversal measure on $\Gamma^s$) are small comparing to the measure of $U_n$. One only need to slightly modify the proof in Section~\ref{s.6.2}.}
There exists $C>0$, such that for each $0\le b\le s(1/\mu(U_n))$ and $\gamma^u\in\Gamma^u$, we have 
$$
\mu_{\gamma^u}(f^{-b}U_n\cap\Omega_0)\le C\mu(U_n).
$$
for $n$ large enough. Here $\Omega_0$ is the base of the tower, and $\mu_{\gamma^u}$ are the conditional measures of $\mu_0 = \mu|_{\Omega_0}$ along leaves in $\Gamma^u$ (the precise definition of $\mu_0$ and $\Gamma^u$ are in Section~\ref{ss.3.3}).
\end{assumption}

Note that the assumption holds trivially for those $\gamma^u\in\Gamma^u$ that do not intersect with $f^{-b}(U_n)$.

With that we are ready to state the main theorem of this article.

\begin{main}\label{m.2}
Assume that $f:\bM\to \bM$ is a $C^{1+\alpha}$ non-invertible map that can be modeled by Young's towers with summable tail. Let $\varphi:\bM\to\RR\cup\{\pm\infty\}$ be a continuous observable, achieving its maximum on a closed set $\Lambda$ with $\mu(\Lambda)=0$. Assume that there exists a sequence of thresholds $\{u_n\}$ such that~\eqref{e.1} is satisfied, and the corresponding sets $U_n$ satisfy Assumption 1 and 2, with $\sum_{\ell=1}^{\infty}\ell\hat\alpha_{\ell}<\infty$.

Suppose one of the following two assumptions holds:\\
\noindent (1) either the tower is defined using the first return map, and $U_n\subset \Omega_0$ for $n$ large enough;\\
\noindent (2) or Assumption~\ref{a.3} holds, and the decay rate satisfies  $\cC(k)=o(k^{-1})$.

Then we have
$$
\PP(\xi^{w_n}_{u_n} = k)\to m(\{k\})
$$
as $n\to\infty$, where $m$ is the compound Poisson distribution for the parameters $\{\lambda_\ell\}$.
\end{main}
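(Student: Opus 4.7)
The plan is to approximate $\xi^{w_n}_{u_n}$ by a compound binomial random variable and then pass to the compound Poisson limit, adapting the scheme of~\cite{HV19} to the Young tower setting. The proof has three moving pieces: a block decomposition that separates within-cluster correlation from between-cluster independence; a decoupling estimate that makes distinct blocks approximately independent, obtained by combining the smoothing permitted by Assumptions~\ref{a.1}--\ref{a.2} with the polynomial decay of correlations; and a short-return tail estimate, supplied either by Assumption~\ref{a.3} in case (2), or by the first-return-map structure in case (1).

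First I would partition $\{0,1,\ldots,w_n-1\}$ into $N_n\sim w_n/(2K+1+\Delta_n)$ consecutive clusters of length $2K+1$ separated by gaps of length $\Delta_n$. Counting the number of clusters that contain at least one exceedance of $u_n$, together with the conditional cluster-size distribution $\lambda_\ell(K,U_n)$ defined in~\eqref{e.2}, produces a compound binomial approximant $\tilde{\xi}$ of $\xi^{w_n}_{u_n}$. The convergences $\lambda_\ell(K,U_n)\to\lambda_\ell(K)\to\lambda_\ell$ and the identity $\sum_\ell\lambda_\ell=1$ are furnished by Theorem~\ref{t.1} from the hypothesis on $\{\hat\alpha_\ell\}$; together with the calibration $N_n\cdot\PP(\sum_{i=0}^{2K}\II_{U_n}\circ f^i\ge 1)\to \alpha_1(K)\tau$, this forces $\tilde{\xi}$ to converge to the desired compound Poisson law as soon as the errors between $\xi^{w_n}_{u_n}$ and $\tilde{\xi}$ vanish.

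For the between-block decoupling, I would replace the singular indicator $\II_{U_n}$ by a Lipschitz function sandwiched between $\II_{U_n^i}$ and $\II_{U_n^o}$ via Lemmas~\ref{l.5.5} and~\ref{l.5.6}; Assumption~\ref{a.1} makes the replacement error $o(\mu(U_n))$ per block, while Assumption~\ref{a.2} controls the Lipschitz norm by $O(1/r_n)$. Polynomial decay of correlations against $L^\infty$ at rate $\cC(k)=o(k^{-1})$ then yields a decoupling error per pair of blocks that is summable in $n$ for an appropriate polynomial choice of $\Delta_n$ and $r_n$. Short-range returns of length between $K$ and $\Delta_n$, which may cross block boundaries, are treated by Assumption~\ref{a.3}: the mass in $U_n$ whose orbit up to time $k$ visits a high tower level contributes at most $O(k^{-p''})\mu(U_n)$ to $\mu(U_n\cap f^{-j}U_n)$ and so is summable; the remaining short returns factor through the Gibbs--Markov base $\Omega_0$, where the limits defining $\hat\alpha_\ell$ in~\eqref{e.3} are controlled directly. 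In case (1) this last step is automatic because the first return map to $\Omega_0$ is Gibbs--Markov and $U_n\subset\Omega_0$, so no tail correction is needed.

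The main difficulty is the simultaneous choice of $K$, $\Delta_n$, and $r_n$ compatible with the mild decay assumption $\cC(k)=o(k^{-1})$: the Lipschitz norm $1/r_n$ of the smoothed indicator and the residual boundary mass $F(r_n)$ pull in opposite directions, while the cluster length $2K+1$ and the gap $\Delta_n$ must both diverge while preserving $N_n\cdot(2K+1)\mu(U_n)\to \tau$. Threading this needle, rather than any individual estimate, is the delicate heart of the argument, and is exactly the content of the rate conditions deferred to Theorem~\ref{m.5}.
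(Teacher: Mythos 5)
Your proposal captures the overall scheme of the paper's treatment of Case (2) — block decomposition, compound binomial approximation via Theorem~\ref{t.2}, smoothing of indicators sandwiched between $U_n^i$ and $U_n^o$ with Lipschitz norm $O(1/r_n)$, decay of correlations against $L^\infty$ for the between-block decoupling $\cR_1$, and Assumption~\ref{a.3} for the short-return error $\cR_2$. That part is sound in spirit, though I would note that the paper does not smooth $\II_{U_n}$ directly but rather decomposes $\{Z_0=u\}$ into the atoms $Z_{\vec v}$ and smooths each of those, which is what makes the Lipschitz norm controllable by $C_K/r_n$.

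The genuine gap is in Case (1). You write that ``in case (1) this last step is automatic because the first return map to $\Omega_0$ is Gibbs--Markov and $U_n\subset\Omega_0$, so no tail correction is needed,'' and then implicitly keep running the same decay-of-correlations argument for $f$. But Case (1) of Theorem~\ref{m.2} assumes only a \emph{summable} tail, which yields decay $\cC(k)$ that may fail $\cC(k)=o(k^{-1})$; the extra assumption $\cC(k)=o(k^{-1})$ is imposed only in Case (2). So the decoupling estimate you need for $\cR_1$ is unavailable in Case (1) if you try to work with $f$ directly. The paper's proof of Case (1) circumvents this entirely by changing systems: it proves the compound Poisson limit for the entry times under the induced Gibbs--Markov map $T=f^R$, using its \emph{exponential} $\phi$-mixing (Theorem~\ref{t.3}, Theorem~\ref{m.4}, and the cylinder approximation of open sets in Theorem~\ref{t.4}), and then invokes the inducing transfer theorem of Freitas--Freitas--Magalh\~aes (quoted as Theorem~\ref{t.5}) to carry the distributional limit back to $(\bM,f,\mu)$. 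This inducing step is not optional decoration — it is what allows Case (1) to get away with only summable tail — and it is absent from your proposal. Without it, your argument proves Case (1) only under the stronger hypothesis $\cC(k)=o(k^{-1})$, which collapses Case (1) into a special case of Case (2) rather than establishing the stated theorem.
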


The previous theorem has a similar formulation in the invertible case:

\begin{main}\label{m.1}
Assume that $f:\bM\to \bM$ is a $C^{1+\alpha}$  (local) diffeomorphism that can be modeled by Young's towers, with decay rate $\cC(k)=o(k^{-1})$. Let $\varphi:\bM\to\RR\cup\{\pm\infty\}$ be a continuous observable, achieving its maximum on a closed set $\Lambda$ with $\mu(\Lambda)=0$. Assume that there exists a sequence of thresholds $\{u_n\}$ such that~\eqref{e.1} is satisfied, and the corresponding sets $U_n$ satisfy Assumption 1 to 4, with $\sum_{\ell=1}^{\infty}\ell\hat\alpha_{\ell}<\infty$.

Then the rare event process $\xi^N_{u_n} = \sum_{k=0}^{N-1} \II_{\{X_k>u_n\}}$ satisfies
$$
\PP(\xi^{w_n}_{u_n} = k)\to m(\{k\})
$$
as $n\to\infty$, where $m$ is the compound Poisson distribution for the parameters $\{\alpha_1\tau\lambda_\ell\}$.
\end{main}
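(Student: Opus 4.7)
The plan is to follow the same architecture as Theorem~\ref{m.2} but with the additional step of handling the two-sided structure of the invertible tower via Assumption~\ref{a.4}. The backbone is the compound binomial approximation theorem of Haydn--Vaienti (used with $K$ desynchronized from $n$), applied not directly to $U_n$ but to a cylinder approximation inside the tower base $\Omega_0$. First I would use Assumption~\ref{a.1} to sandwich $U_n$ between $U_n^i$ and $U_n^o$ with $\mu(U_n^o\setminus U_n^i)=o(\mu(U_n))$, and then pull these through the tower projection to approximate by a finite union of rectangles (products of a subset of $\Omega_0$ with their floors), invoking Lemmas~\ref{l.5.5} and~\ref{l.5.6} together with Assumption~\ref{a.2} to absorb the boundary error. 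The point of this reduction is that on the rectangles we can exploit the product structure along $\Gamma^u\times\Gamma^s$, which is exactly what the invertible setting forces on us.

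Next I would verify the analogues of conditions $\DD_p^*$ and $\DD_p'^*$ from~\cite{FFM} using the polynomial decay of correlations $\cC(k)=o(k^{-1})$. For $\DD_p^*$ (long-range decorrelation) the standard move is to replace the indicator $\II_{U_n}$ by a Lipschitz approximation whose Lipschitz constant is controlled by $1/r_n$ from Assumption~\ref{a.2}; the rate $\cC(k)=o(k^{-1})$ combined with the time window of length $w_n\sim\tau/\mu(U_n)$ gives the required $o(\mu(U_n))$ estimate as long as the gap $g(n)$ between blocks is chosen to grow slightly faster than $1/\mu(U_n)$ but negligibly compared to $w_n$. For $\DD_p'^*$ (control of short returns in the middle range $K\le k\le g(n)$) this is where Assumption~\ref{a.3} is used in the non-invertible case; in the invertible setting Assumption~\ref{a.4} plays the parallel role, because for $b\le s(1/\mu(U_n))$ we have the uniform bound $\mu_{\gamma^u}(f^{-b}U_n\cap\Omega_0)\le C\mu(U_n)$, which, when integrated against the transversal measure on $\Gamma^s$, gives $\mu_0(f^{-b}U_n\cap\Omega_0)\le C\mu(U_n)$ with a constant independent of $b$. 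Summing these bounds through the tower floors as in Section~\ref{s.6.2} and combining with the tail bound of the tower yields the required $\sum_{k=K}^{g(n)}\mu(U_n\cap f^{-k}U_n)=o(\mu(U_n))$ after sending $K\to\infty$.

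Once the two conditions are verified, the compound binomial approximation theorem yields convergence to a compound Poisson law whose parameters are read off from the short return probabilities $\hat\alpha_\ell$. I would then invoke Theorem~\ref{t.1} from Section~\ref{ss.3.1} to translate $\{\hat\alpha_\ell\}$ into the cluster-size distribution $\{\lambda_\ell\}$, with the extremal index $\alpha_1$ appearing as the Poisson rate multiplier; that $\sum_{\ell}\ell\hat\alpha_\ell<\infty$ guarantees that $\{\lambda_\ell\}$ is a genuine probability distribution (so the limit is not degenerate) and that Fubini-type exchanges in the cluster analysis are legitimate. The identification of the overall rate as $\alpha_1\tau$ then comes from \eqref{e.1} and \eqref{e.4}: the total Poisson intensity of events in the window $[0,w_n]$ is $w_n\mu(U_n)\to\tau$, so the rate of \emph{clusters} is $\alpha_1\tau$.

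The main obstacle I expect is the interaction between the short return analysis and the invertible geometry. Unlike the non-invertible case of Theorem~\ref{m.2}, we cannot simply push $U_n$ forward into the base under the tower projection since $f^{-b}$ spreads out along stable leaves; this is why one has to measure the preimage $f^{-b}U_n$ against the conditional measure $\mu_{\gamma^u}$ on each unstable leaf, and it is also why Assumption~\ref{a.4} is phrased at the leaf level rather than as a global bound. The delicate point is that $s(1/\mu(U_n))$, the cutoff for which the leafwise bound holds, is essentially the same scale that appears in Assumption~\ref{a.3}; one must show that the contribution of returns with $b>s(1/\mu(U_n))$ is controlled by a separate mechanism, either by the polynomial tail of the tower directly, or by summing $\cC(k)$ against a Lipschitz mollification of $\II_{U_n}$ with Lipschitz constant controlled via Assumption~\ref{a.2}. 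Matching these two estimates at the cutoff and ensuring that the composite error is still $o(\mu(U_n))$ is the step I expect to require the most care.
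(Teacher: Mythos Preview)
Your high-level architecture is right: reduce to entry times via Theorem~\ref{m.3}, apply the compound binomial approximation Theorem~\ref{t.2}, and control the two error terms $\cR_1,\cR_2$ using decay of correlations for the first and a leafwise short-return estimate for the second. Your description of the $\cR_2$ step is essentially what the paper does in Section~\ref{s.6.2}: decompose $\mu(U\cap f^{-j}U)$ through the tower floors, split along the `good' cylinders, and use Assumption~\ref{a.4} to bound $\mu_{\tilde\gamma}(f^{-b}U)\lesssim \mu(U)$ after pushing forward by $T^{l-1}$, then integrate over the transversal measure.

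There is, however, a genuine gap in your treatment of $\cR_1$. You say ``replace the indicator $\II_{U_n}$ by a Lipschitz approximation whose Lipschitz constant is controlled by $1/r_n$'', but in the invertible tower the decay of correlations~\eqref{e.decay2} is stated only for $\psi\in L^\infty$ that are \emph{constant on stable disks}. The second indicator $\II_{\{V_\Delta^M=q-u\}}$ is not constant on stable disks, and no Lipschitz smoothing of the first factor fixes that. The paper's actual move is to introduce an intermediate time shift $\Delta'<\Delta$ and excise from $\{V_{\Delta'}^{M+\Delta'-\Delta}=q-u\}$ the set $\tilde S=\bigcup_k S_k(U)$ of (forward-iterated) stable disks whose $k$-th image crosses $\partial U$; on the complement of $\tilde S$ the indicator \emph{is} constant on stable disks, and the polynomial contraction~\eqref{e.cont} together with Assumption~\ref{a.2} gives $\mu(\tilde S)\le \sum_{k\ge (2K+1)\Delta'}\mu(B_{C/k^\alpha}(\partial U))$, which is the extra error term in~\eqref{e.y}. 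This is precisely why the invertible case needs the stronger boundary hypothesis $p'>2/\alpha$ in Theorem~\ref{m.5}(2)(b), and your proposal does not account for it.

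A secondary point: your plan to ``pull these through the tower projection to approximate by a finite union of rectangles'' and then invoke Lemmas~\ref{l.5.5}--\ref{l.5.6} is not what the paper does here. That cylinder-reduction route is the Section~\ref{s.6} strategy for the Gibbs--Markov/non-invertible case; in Section~\ref{s.5} the paper works directly with $U_n$ via the function approximations $\phi^o_{\vec v}$ and $\psi$, with Assumption~\ref{a.1} entering only through the $o(\mu(U))$ bound on $\mu(Z^o_{\vec v}\setminus Z_{\vec v})$. Your rectangle reduction is not obviously wrong, but it adds a layer that the paper avoids and would still leave you facing the stable-disk constancy issue above.
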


\begin{remark}
In both theorems, the assumption on the continuity of $\varphi$ can be weakened. One only need $\varphi$ to be upper semi-continuous, and take $U_n$ to be the closed set $\{X_0 \ge u_n\}$ or its interior. The proof applies without any modification. In fact, the proof below does not depend on whether $U_n$ are open or not.
\end{remark}

As the first corollary, we give a simple criterion for the limiting distribution to be indeed Poisson. For any measurable set $U\subset \bM$, we define the {\em periodic} of $U$, denoted by $\pi(U)$, as:
$$
\pi(U) = \min\{k>0: f^{-k}U\cap U\ne\emptyset\}.
$$
This can be seen as the {\em first time} that some point in $U$ returns to $U$. We also define the {\em essential periodic}\footnote{The period $\pi(\cdot)$ has been studied extensively in a series of papers (see for example,~\cite{STV} for the asymptotic behavior,~\cite{A13} for the fluctuation and~\cite{HY} for its relation with the local escape rate. However, as far as the author is aware, the essential period $\pi_{\ess}(\cdot)$ has not been previously studied.)} for a positive measure set $U$ to be 
$$
\pi_{\ess}(U) = \min\{k>0: \mu(f^{-k}U\cap U)>0\}.
$$
Clearly one has $\pi(U)\le \pi_{\ess}(U)$. On the other hand, $\mu$ is supported on the entire manifold $\bM$ and $U$ is open, then we have  $\pi(U)= \pi_{\ess}(U)$, as the nonempty intersection picked up by $\pi(U)$ must be an open set with positive measure.

\begin{corollary}\label{c.1}
Assume that the nested sequence $\{U_n\}$ satisfies 
$\pi_{\ess}(U_n)\to\infty$. Then the parameters $\alpha_1$ and $\{\lambda_\ell\}$ exist and satisfy $\alpha_1 = 1 = \lambda_1$, $\lambda_\ell = 0$ for $\ell\ge 2$. Furthermore, if the assumptions of either Theorem~\ref{m.2} or~\ref{m.1} holds, then the rare even process $\xi_{u_n}$ converges to a Poisson distribution with parameter $\tau$.

In particular, if $\Lambda =\cap_n U_n$ is contained in a fundamental domain of $f$ and does not contain periodic point, then the rare event process $\xi_{u_n}$ converges to a Poisson distribution with parameter $\tau$.

\end{corollary}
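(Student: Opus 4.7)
The plan is to deduce the parameter values directly from the hypothesis $\pi_{\ess}(U_n)\to\infty$, after which Theorem~\ref{m.2} or~\ref{m.1} gives the Poisson limit, and then to verify separately that the geometric condition in the second statement implies $\pi_{\ess}(U_n)\to\infty$.

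Fix any $K\ge 1$ and take $n$ large enough that $\pi_{\ess}(U_n)>2K$. By definition, $\mu(U_n\cap f^{-k}U_n)=0$ for every $1\le k\le 2K$. Combined with $f$-invariance of $\mu$, this kills $\mu_{U_n}(\tau_{U_n}\le K)$, and since $\tau^{\ell-1}_{U_n}\ge\tau_{U_n}$ for $\ell\ge 2$, also $\mu_{U_n}(\tau^{\ell-1}_{U_n}\le K)$ for every $\ell\ge 2$. Taking the iterated limits in~\eqref{e.3} and~\eqref{e.4} yields $\alpha_1=\hat\alpha_1=1$ and $\hat\alpha_\ell=0$ for $\ell\ge 2$. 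The same measure-zero argument applied to the numerator of~\eqref{e.2} --- whose supporting event $\{\sum_{i=0}^{2K}\II_{U_n}\circ f^i\ge 2\}$ sits inside $\bigcup_{0\le i<j\le 2K}f^{-i}(U_n\cap f^{-(j-i)}U_n)$ --- gives $\lambda_1(K,U_n)=1$ and $\lambda_\ell(K,U_n)=0$ for $\ell\ge 2$. Substituting into Theorems~\ref{m.2} and~\ref{m.1} collapses each cluster to a singleton almost surely, so the compound Poisson reduces to a Poisson distribution whose intensity is pinned down to $\tau$ by the expected-value asymptotic $\EE[\xi^{w_n}_{u_n}]=w_n\mu(U_n)\to\tau$.

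For the ``in particular'' statement I argue by contrapositive. Suppose $\pi_{\ess}(U_n)\not\to\infty$; a pigeonhole argument then yields an integer $k^*\ge 1$ and an infinite subsequence $n_j$ along which $\mu(U_{n_j}\cap f^{-k^*}U_{n_j})>0$. Pick $x_j$ in each such intersection and extract a convergent subsequence $x_j\to x^*$ by compactness of $\bM$. The continuity of $\varphi$ forces $\bigcap_n\overline{U_n}\subset\{X_0\ge\sup\varphi\}=\Lambda$, so $x^*\in\Lambda$, and continuity of $f^{k^*}$ places $f^{k^*}(x^*)$ in $\Lambda$ as well. Both points therefore lie in the ambient fundamental domain, and its defining property forces $f^{k^*}(x^*)=x^*$, making $x^*$ a periodic point in $\Lambda$ --- contradicting the hypothesis.

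The only delicate point is the interpretation of \emph{fundamental domain} for a possibly non-invertible $f$ --- I read it as an open set $F$ with the property that if $x,f^k(x)\in F$ for some $k\ge 1$ then $x$ is periodic --- which is the natural reading under which the corollary makes sense. Everything else is routine bookkeeping once the short-return intersections $U_n\cap f^{-k}U_n$ are seen to vanish in measure.
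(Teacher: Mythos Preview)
Your argument is correct and follows essentially the same route as the paper. The paper packages the first step as Lemma~\ref{l.3.1} (which you reprove inline) and then reads off $\alpha_1,\lambda_\ell$ from Theorem~\ref{t.1}, whereas you compute $\lambda_\ell(K,U_n)$ directly from~\eqref{e.2}; both are fine. For the ``in particular'' clause the paper isolates a general Proposition~\ref{p.3} about $\pi(U_n)$ (not $\pi_{\ess}$) and then uses $\pi\le\pi_{\ess}$, but the compactness/continuity argument is identical to yours. The paper also clarifies that ``contained in a fundamental domain'' is to be read as ``$\Lambda$ meets every orbit at most once'', which makes your deduction $f^{k^*}(x^*)=x^*$ immediate without needing your alternative reading.

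One small wrinkle: your identification of the Poisson intensity via $\EE[\xi^{w_n}_{u_n}]=w_n\mu(U_n)\to\tau$ is not quite a proof, since convergence in distribution does not by itself give convergence of means. This is harmless, though, because Theorems~\ref{m.2} and~\ref{m.1} already hand you the limiting parameters $\{\alpha_1\tau\lambda_\ell\}$, and with $\alpha_1=\lambda_1=1$ the Poisson part has rate $\alpha_1\tau=\tau$ directly. You can simply drop the expected-value sentence.
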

In particular, if $x$ is a non-periodic point then it is easy to see that $\pi(B_r(x))\to\infty$ as $r\to 0$. We then recover the classical result on the Poisson distribution for metric balls at non-periodic points. We would also like to point out that a similar condition is observed by Freitas et al in~\cite[Theorem 3.2]{FFRS} for interval maps and Cantor sets, when ``the dynamics considered is not compatible with the self-similar structure of the maximal set''.

The second corollary deals with the rare event distribution for the process $\{X_k\}$.  A similar result is obtained in a recent work by Freitas et al in~\cite{FFRS}, assuming two technical conditions, namely $\DD$ and $\DD'$, hold.\footnote{Similar to the conditions $\DD_p(u_n)^*$ and $\DD_p'(u_n)^*$ mentioned earlier, such conditions can be checked if one has decay of correlations against all $L^1$ observables. However, this assumption does not hold for Young's towers with sub-exponential tails. See the discussion in Remark~\ref{r.3}.}
 
\begin{corollary}
Assume that the conditions of Theorem~\ref{m.2} or~\ref{m.1} hold. Then the extremal value process
$$
M_n=\max\{X_k,k=0,\ldots,n-1\}
$$
satisfies
$$
\PP(M_{w_n}\le u_n)\to e^{-\alpha_1\tau}
$$
as $n\to\infty$. In particular, if $\pi_{\ess}(U_n)\to\infty$ then the limiting distribution is $e^{-\tau}$.
 
\end{corollary}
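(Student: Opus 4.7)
The plan is to reduce the extremal value statement to the $k=0$ case of the rare event process convergence furnished by Theorem~\ref{m.2} or Theorem~\ref{m.1}. First I would use the tautological identity
$$
\{M_{w_n} \le u_n\} = \bigcap_{k=0}^{w_n-1}\{X_k \le u_n\} = \{\xi^{w_n}_{u_n} = 0\},
$$
which is immediate from the definition of $\xi^{w_n}_{u_n}$ as the count of exceedances of $u_n$ in time $w_n$. Depending on whether $f$ is invertible, I would then invoke Theorem~\ref{m.2} or Theorem~\ref{m.1} to conclude
$$
\PP(M_{w_n}\le u_n) = \PP(\xi^{w_n}_{u_n}=0) \longrightarrow m(\{0\}),
$$
where $m$ is the relevant limiting compound Poisson distribution.

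The second step is to evaluate $m(\{0\})$. Writing a compound Poisson random variable as $W = \sum_{j=1}^P Z_j$ with $P$ Poisson of rate $\Lambda$ and positive-integer-valued $Z_j$, one has $\{W=0\}=\{P=0\}$, and hence $m(\{0\}) = e^{-\Lambda}$. What I would then verify is that the total intensity of the limiting law coincides with $\Lambda = \alpha_1 \tau$ in both main theorems. For Theorem~\ref{m.1} this is transparent from the parameterization $\{\alpha_1\tau\lambda_\ell\}$ together with the normalization $\sum_\ell \lambda_\ell = 1$ promised after~\eqref{e.3'}; for Theorem~\ref{m.2} I would deduce the same identification from the explicit relation between $\{\lambda_\ell\}$ and $\{\hat\alpha_\ell\}$ (Theorem~\ref{t.1}) combined with the mean-exceedance heuristic $\EE[\xi^{w_n}_{u_n}]=w_n\PP(U_n)\to\tau$ and the interpretation of $\alpha_1$ as the extremal index (reciprocal of the mean cluster size). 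No further dynamical input is needed.

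For the second assertion, under the hypothesis $\pi_{\ess}(U_n)\to\infty$ I would apply Corollary~\ref{c.1}, which gives $\alpha_1 = 1$, so the limit collapses to $e^{-\tau}$. I do not anticipate any substantive obstacle; the one place requiring more than a sentence of bookkeeping is reconciling the two normalization conventions for the compound Poisson parameters across Theorems~\ref{m.2} and~\ref{m.1}, a purely administrative point rather than a mathematical one.
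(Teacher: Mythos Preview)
Your proposal is correct and follows essentially the same argument as the paper: the identity $\{M_{w_n}\le u_n\}=\{\xi^{w_n}_{u_n}=0\}$ reduces the statement to the $k=0$ case of Theorem~\ref{m.2} or~\ref{m.1}, and then $m(\{0\})=\PP(P=0)=e^{-\alpha_1\tau}$ since the Poisson part has rate $\alpha_1\tau$ (with the $\pi_{\ess}$ case following from Corollary~\ref{c.1}). Your extra remark about reconciling the parameter conventions across the two main theorems is a fair observation about the paper's notation (Theorem~\ref{m.2} writes $\{\lambda_\ell\}$ while Theorem~\ref{m.1} writes $\{\alpha_1\tau\lambda_\ell\}$), but as you say this is purely administrative and resolved by Remark~\ref{r.2}.
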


This corollary easily follows from the observation that $\{M_{w_n}\le u_n\}=\{\xi^{w_n}_{u_n} = 0\}$, and for a compound Poisson distribution $m$ with parameters $\{\alpha_{1}\tau\lambda_\ell\}$, $m(\{0\})=\PP(P=0)=e^{-\alpha_{1}\tau}$ where $P$ is the Poisson part of $m$. See the discussion on the properties of compound Poisson distribution in Section~\ref{ss.3.2}.

This paper is organized in the following way: in Section~\ref{s.3} we collect some existing results on the return and entry times to an arbitrary null set $\Lambda$, and establish the existence of the parameters $\{\lambda_\ell\}$ and $\alpha_1$. We will also introduce an abstract compound binomial approximation theorem which will be the main tool to show convergence to a compound Poisson process.

In Section~\ref{s.4}, we will establish the general equivalence between rare event process and entry times, thus converting the limiting distribution of rare event process for the observable $\vp$ to the limiting entry times distribution of the set $\Lambda$, on which $\vp$ achieves its maximum. The novelty here is that we do not assume the measure of the sets $\{X_0> u_n\}$ to be of order $\tau/n$. This is done in Theorem~\ref{m.3}.

Then in Section~\ref{s.6} and~\ref{s.5}, we prove the convergence of the entry times distribution to the compound Poisson process, for non-invertible and invertible systems respectively. To make the paper more interesting, we will use completely different techniques for these two cases: in the case of non-invertible maps, we will prove the convergence to the compound Poisson distribution for the induced system using $\phi$-mixing property, then apply an inducing argument to extend the result to the original map. This yields an interesting theorem by itself (Theorem~\ref{m.4}), and will allow us to get rid of the very technical Assumption~\ref{a.3};
in the case of invertible maps, we will use fast decay of correlations which is used by~\cite{CC} and~\cite{HW15}. 

We would like to point out that in all the theorems in this paper, we do not assume the measure $\mu$ to be the SRB measure (in the invertible case) or the absolutely continuous invariant probability (in the non-invertible case). As is shown in~\cite{PSZ} and several recent papers, Young's tower usually support many interesting measures other than the SRB measure.  Among them are the equilibrium states of geometric potentials, and sometimes the measure of maximal entropy, where our results can be applied.


\section{Preliminaries}\label{s.3}
In this section, we will introduce several notations that will be used through out the paper. Most importantly, we will introduce the short return and entry times on a sequence of nested sets, and deal with the existence of $\lambda_\ell$'s defined by~\eqref{e.3'}. Then we will state a compound binomial approximation theorem developed in~\cite{HV19}, which will enable us to show the convergence to the compound Poisson distribution. The last subsection contains the general definition of Young's towers for both invertible and non-invertible maps.

\subsection{Return and entry times on a sequence of nested sets}\label{ss.3.1}
In this section we recall the general results on the number of entries to an arbitrary null set $\Lambda$ within a cluster. For this purpose, we write, for any subset $U\subset \bM$, 
$$
\tau_U(x)=\min\{j\ge 1: f^j(x)\in U\}
$$
the first entry time to the set $U$. Then $\tau_U|_U$ is the first return time for points in $U$. Higher order entry times can be defined recursively:
$$
\tau^1_U=\tau_U, \mbox{ and }\tau^j_U(x) = \tau^{j-1}_U(x) + \tau_U(f^{\tau^{j-1}_U}(x)).
$$
For simplicity, we write $\tau^0_U = 0$ on $U$.

Given a sequence of nested sets $U_n,n=1,2,\ldots$ with $U_{n+1}\subset U_n$, $\cap_n U_n=\Lambda$ and $\mu(U_n)\to 0$, we will fix a large integer $K>0$ (which will be sent to infinity later), and assume that the limit 
$$
\hat\alpha_\ell(K) = \lim_{n\to\infty}\mu_{U_n}(\tau^{\ell-1}_{U_n}\le K)
$$
exists for $K$ sufficiently large and for every $\ell\in\NN$. By definition
$\hat\alpha_\ell(K)\ge \hat\alpha_{\ell+1}(K)$ for all $\ell$, and $\hat\alpha_1(K)=1$ due to our choice of $\tau^0$. Also note that $\hat\alpha_\ell(K)$ is non-decreasing in $K$ for every $\ell$. As a result, we have for every $\ell\ge 1$:
\begin{equation}\label{e.3.0}
\hat\alpha_\ell = \lim_{K\to\infty}\hat\alpha_\ell(K) \mbox{ exists for every }\ell, \mbox{ and } \hat\alpha_1=1,\hat\alpha_\ell\ge \hat\alpha_{\ell+1}.
\end{equation}

Note that in the definition of $\hat{\alpha}$, the cut-off for the short return time $K$ does not depend on the set $U_n$. 
Another way to study the short return properties for the nested sequence $U_n$ is to look at
\begin{equation}\label{e.beta}
\beta_\ell = \lim_{n\to\infty}\mu_{U_n}(\tau_{U_n}^{\ell-1}\le s_n)
\end{equation}
for some increasing sequence of integers $\{s_n\}$, with $s_n\mu(U_n)\to 0$ as $n\to\infty$. In other words, one can synchronize $K$ and $n$ in the same limit. This is the approach taken by Freitas et al in~\cite{FFRS}. However, we will see later in Proposition~\ref{p.1} and~\ref{p.2} that under our settings, we have $\beta_\ell=\hat{\alpha}_{\ell}$, while the latter is significantly easier to use (also potentially easier for numerical simulation). 

To demonstrate the power of desynchronizing $K$ from $n$, recall that for any set $U$, the  essential periodic of $U$ is given by:
$$
\pi_{\ess}(U) = \min\{k>0: \mu(f^{-k}U\cap U)>0\}.
$$
Then the following lemma can be easily verified using the definition of $\hat{\alpha}$:
\begin{lemma}\label{l.3.1}
Let $U_n$ be a sequence of nested sets. Assume that $\pi_{\ess}(U_n)\to \infty$ as $n\to\infty$, then $\hat{\alpha}_\ell$ exists and equals zero for all $\ell\ge 2$. 
\end{lemma}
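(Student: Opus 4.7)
The plan is to verify directly that $\hat{\alpha}_\ell(K) = 0$ for every fixed $K$ and every $\ell \ge 2$ once $n$ is sufficiently large, so that both iterated limits defining $\hat{\alpha}_\ell$ collapse to zero. First, I would use the monotonicity of the iterated entry times: from the recursive definition $\tau^{j}_{U_n} = \tau^{j-1}_{U_n} + \tau_{U_n}\circ f^{\tau^{j-1}_{U_n}}$ and $\tau_{U_n} \ge 1$, one has $\tau^{\ell-1}_{U_n} \ge \tau_{U_n}$ for $\ell \ge 2$, hence
$$
\{\tau^{\ell-1}_{U_n}\le K\} \subset \{\tau_{U_n}\le K\}.
$$
Therefore it is enough to treat the case $\ell = 2$.

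Next, I would bound the numerator using a union bound:
$$
\mu\bigl(U_n\cap\{\tau_{U_n}\le K\}\bigr)
= \mu\Bigl(\bigcup_{k=1}^{K} U_n\cap f^{-k}U_n\Bigr)
\le \sum_{k=1}^{K}\mu\bigl(U_n\cap f^{-k}U_n\bigr).
$$
Now fix $K$. By the hypothesis $\pi_{\ess}(U_n)\to\infty$, there exists $N=N(K)$ such that $\pi_{\ess}(U_n) > K$ for every $n\ge N$. By the very definition of $\pi_{\ess}$, this forces $\mu(U_n\cap f^{-k}U_n) = 0$ for every $1\le k\le K$ and every $n\ge N$. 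Consequently $\mu_{U_n}(\tau_{U_n}\le K) = 0$ for all $n\ge N$, so the limit in $n$ exists and equals zero, giving $\hat{\alpha}_2(K)=0$ for every $K$, and then $\hat{\alpha}_2 = \lim_{K\to\infty}\hat{\alpha}_2(K) = 0$. Combined with the inclusion above this yields $\hat{\alpha}_\ell = 0$ for every $\ell\ge 2$.

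There is essentially no obstacle here: the lemma is a direct unpacking of the definition of $\pi_{\ess}$, the key observation being that a positive essential period uniformly larger than $K$ kills every short-return event of length at most $K$, regardless of the geometry or size of $U_n$. The only delicate point is conceptual rather than technical, namely that desynchronizing the cut-off $K$ from the index $n$ (as in \eqref{e.3} rather than \eqref{e.beta}) is exactly what makes this argument immediate: one may first choose $n$ large enough that $\pi_{\ess}(U_n)>K$ before letting $K\to\infty$.
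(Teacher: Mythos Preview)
Your proof is correct and follows essentially the same approach as the paper: fix $K$, choose $n$ large enough that $\pi_{\ess}(U_n)>K$, and observe that all the intersections $U_n\cap f^{-k}U_n$ for $1\le k\le K$ then have zero measure, so the conditional probability vanishes. The only cosmetic difference is that you first reduce to $\ell=2$ via the monotonicity $\tau^{\ell-1}_{U_n}\ge\tau_{U_n}$, whereas the paper handles all $\ell\ge2$ at once with the same union bound.
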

\begin{proof}
For each $K$, one can take $n_0$ large enough such that $\pi_{\ess}(U_n)> K$ for all $n > n_0$. Then for $\ell \ge 2$, 
$$
\mu_{U_n}(\tau^{\ell-1}_{U_n}\le K)  \le \mu_{U_n} \left(\bigcup_{k=0}^K f^{-k}U_n\cap U_n\right) = 0
$$
since all the intersections have zero measure.
\end{proof}
Note that the converse of this lemma does not hold. Also note that the similar result for $\beta_\ell$ will require information on the rate at which $\pi(U_n)$ or $\pi_{\ess}(U_n)\to\infty$.  See~\cite{HY} for more detail.

Now let us come back to the properties of $\hat{\alpha}_{\ell}$. We assume that the limit
$$
p^\ell_i =\lim_{n\to\infty}\mu_{U_n}(\tau^{\ell-1}_{U_n}=i)
$$
exists for every $i\ge0,\ell\ge 1$. This is the limit of the conditional probability of the level sets of the $\ell$th return time $\tau^\ell_{U_n}$. Then it is shown in~\cite{HV19} that the following relation holds between $\{\hat\alpha_\ell\}$ and $\{p^\ell_i\}$.

\begin{lemma}\cite[Lemma 1]{HV19}\label{l.1}
	For every $\ell\ge 2$, we have
	$$
	\hat\alpha_\ell = \sum_i p^\ell_i.
	$$
\end{lemma}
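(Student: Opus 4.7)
The plan is to reduce the identity to a swap of a finite sum with the inner limit, together with a monotone convergence argument as $K\to\infty$.

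First, I would observe that for fixed $K$ and fixed $n$, the level sets $\{\tau^{\ell-1}_{U_n}=i\}$, $i=0,1,\ldots,K$, partition $\{\tau^{\ell-1}_{U_n}\le K\}\cap U_n$, so that
\begin{equation*}
\mu_{U_n}(\tau^{\ell-1}_{U_n}\le K) \;=\; \sum_{i=0}^{K}\mu_{U_n}(\tau^{\ell-1}_{U_n}=i).
\end{equation*}
Because this is a \emph{finite} sum in $i$, the hypothesized existence of each $p^\ell_i$ gives
\begin{equation*}
\lim_{n\to\infty}\mu_{U_n}(\tau^{\ell-1}_{U_n}\le K) \;=\; \sum_{i=0}^{K} p^\ell_i,
\end{equation*}
which also confirms that the inner limit in the definition of $\hat\alpha_\ell(K)$ indeed exists for every $K$.

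Next I would send $K\to\infty$. The left-hand side $\hat\alpha_\ell(K)$ is non-decreasing in $K$ and converges to $\hat\alpha_\ell$ by~\eqref{e.3.0}, while the right-hand side $\sum_{i=0}^{K} p^\ell_i$ is the partial sum of a series with non-negative terms. Monotone convergence then yields
\begin{equation*}
\hat\alpha_\ell \;=\; \lim_{K\to\infty}\sum_{i=0}^{K} p^\ell_i \;=\; \sum_{i\ge 0} p^\ell_i,
\end{equation*}
which is the claimed identity (the terms with $i<\ell-1$ vanish since $\tau^{\ell-1}_{U_n}\ge \ell-1$).

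The only delicate point is the legitimacy of swapping $\lim_n$ with the finite sum over $i\le K$, and this is immediate because $K$ is held fixed before $n\to\infty$. There is no genuine obstacle here; the lemma is essentially a bookkeeping statement saying that desynchronizing $K$ from $n$ lets us pass from cumulative probabilities to pointwise probabilities term by term. No assumption on uniform tail control of $\tau^{\ell-1}_{U_n}$ in $n$ is needed because $\hat\alpha_\ell$ is defined via the iterated limit $\lim_K\lim_n$ rather than as a double limit.
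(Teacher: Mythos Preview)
Your argument is correct: decomposing $\{\tau^{\ell-1}_{U_n}\le K\}$ into the finitely many level sets $\{\tau^{\ell-1}_{U_n}=i\}$ for $i\le K$, passing the limit in $n$ through the finite sum, and then letting $K\to\infty$ via monotonicity is exactly the right computation, and your remark that no uniform-in-$n$ tail control is needed because of the iterated (rather than joint) limit is precisely the point of the desynchronization.

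There is nothing in the present paper to compare against: the lemma is quoted verbatim from \cite[Lemma~1]{HV19} and no proof is reproduced here. Your argument is the natural one and almost certainly coincides with the proof in \cite{HV19}, since the lemma is, as you say, essentially bookkeeping once $K$ is held fixed before $n\to\infty$.
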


Note that $\hat\alpha_\ell(K)$ is the conditional probability to have at least  $\ell-1$ returns in a cluster with length $K$. If we consider the level set:
$$
\alpha_\ell(K)=\lim_{n\to\infty}\mu_{U_n}(\tau^{\ell-1}_{U_n}\le K<\tau^\ell_{U_n})
$$
and its limit
\begin{equation}\label{e.3.1}
\alpha_\ell = \lim_{K\to\infty}\alpha_\ell(K),
\end{equation}
then it is easy to see that $\alpha_\ell = \hat\alpha_\ell - \hat\alpha_{\ell+1}$ which, in particular, implies the existence of $\alpha_\ell$. It also follows from the previous lemma that
$$
\alpha_\ell = \sum_i(p^{\ell-1}_i - p^\ell_i)
$$
for $\ell \ge 2$. In the special case $\ell=1$, we have
\begin{equation}\label{e.3.2}
\alpha_1 = \lim_{K\to\infty}\lim_{n\to\infty}\mu_{U_n}(\tau_{U_n}>K) = 1-\sum_ip^2_i.
\end{equation}

To see the relation between $\{\alpha_\ell\}$ and $\{\lambda_k\}$ defined by~\eqref{e.2} and~\eqref{e.3'}, we put 
$$
Z^K_{n} = \sum_{i=0}^{2K}\II_{U_n}\circ f^i
$$
which counts the number of entries to $U_n$ in a cluster with length $2K$. Then $\alpha_\ell(2K) = \lim_n\mu_{U_n}(Z^K_{n}=\ell)$, and~\eqref{e.2} can be written as
$$
\lambda_\ell(K,U_n)=\PP(Z^K_{n}=\ell|Z^K_{n}>0)=\frac{\PP(Z^K_{n}=\ell)}{\PP(Z^K_{n}>0)}. 
$$
Let us also introduce the notation
$$
Z^{K,-}_n =  \sum_{i=0}^{K-1}\II_{U_n}\circ f^i,\,\mbox{ and\, } Z^{K,+}_n =  \sum_{i=K}^{2K}\II_{U_n}\circ f^i.
$$
Then $Z^K_n= Z^{K,-}_n+Z^{k,+}_n$. \eqref{e.3.1} then becomes
\begin{equation}\label{e.3.3}
\alpha_\ell=\lim_{K\to\infty}\lim_{n\to\infty}\PP(Z^{K,-}_n = \ell|U_n) = \lim_{K\to\infty}\lim_{n\to\infty}\PP(Z^{K,+}_n = \ell|f^K(U_n)),
\end{equation}
where the second equality follows from the invariance of $\mu$. Note that the same expression holds in the case $\ell=1$. 

Define $W^K_n=\sum_{i=0}^{K}\II_{U_n}\circ f^i=Z^{K,-}_n + \II_{U_n}\circ f^K$. Then it follows that 
$$
\alpha_\ell=\lim_{K\to\infty}\lim_{n\to\infty}\PP(W^{K}_n = \ell|U_n).
$$

The next lemma controls the probability to have a very long cluster of entries.
\begin{lemma}\cite[Lemma 2]{HV19}\label{l.2} Assume that the limits in~\eqref{e.3.0} and~\eqref{e.3.1} exist and satisfy $\sum_{\ell=1}^{\infty}\ell\hat\alpha_\ell<\infty$. Then for every $\eta>0$, there exists $K_0>0$ such that for all $K'>K\ge K_0$, we have
$$
\PP_{U_n}(W_n^{K'-K}\circ f^K>0)\le \eta,
$$
for all $n$ large enough (depending on $K$ and $K'$).
\end{lemma}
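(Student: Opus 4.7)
The plan is to dominate the probability by an expectation via Markov's inequality and then show this expectation becomes small once $K$ is large. Writing
$$W_n^{K'-K}\circ f^K \;=\; \sum_{j=K}^{K'}\II_{U_n}\circ f^j \;=\; Z_n^{K'+1,-}-Z_n^{K,-}$$
and applying Markov to a non-negative integer-valued random variable gives
$$
\PP_{U_n}(W_n^{K'-K}\circ f^K>0)\;\le\;\EE_{U_n}[Z_n^{K'+1,-}]-\EE_{U_n}[Z_n^{K,-}].
$$
For each fixed $M$, the random variable $Z_n^{M,-}$ is bounded by $M$ and its level sets have probabilities converging to $\alpha_\ell(M)$, so
$$
\lim_{n\to\infty}\EE_{U_n}[Z_n^{M,-}] \;=\; \sum_{\ell=1}^{M}\ell\,\alpha_\ell(M) \;=\; \sum_{\ell=1}^{M}\hat\alpha_\ell(M)\;=:\;S(M),
$$
where the second equality is Abel summation, using $\alpha_\ell(M)=\hat\alpha_\ell(M)-\hat\alpha_{\ell+1}(M)$ together with $\hat\alpha_{M+1}(M)=0$ (the bound $Z_n^{M,-}\le M$).

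The crux is to show $S(M)\nearrow\sum_{\ell=1}^{\infty}\hat\alpha_\ell<\infty$ as $M\to\infty$. Finiteness of the series is immediate from $\hat\alpha_\ell\le\ell\hat\alpha_\ell$ combined with the standing hypothesis $\sum_\ell\ell\hat\alpha_\ell<\infty$. For the convergence, I would set $T_M(\ell):=\hat\alpha_\ell(M)\II_{\{\ell\le M\}}$ and observe that for every fixed $\ell$ one has $T_M(\ell)\to\hat\alpha_\ell$ (since $\hat\alpha_\ell(M)$ is non-decreasing in $M$ with limit $\hat\alpha_\ell$), while $T_M(\ell)\le\hat\alpha_\ell$ provides a summable dominant; dominated convergence on counting measure then gives $\sum_\ell T_M(\ell)=S(M)\to\sum_\ell\hat\alpha_\ell$.

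Given this, for any $\eta>0$ I would choose $K_0$ so that $\sum_\ell\hat\alpha_\ell-S(M)<\eta/4$ for all $M\ge K_0$. For any $K'>K\ge K_0$ one then has $S(K'+1)-S(K)<\eta/4$, and a final choice of $n$ large enough (depending on $K,K'$) makes $\EE_{U_n}[Z_n^{K'+1,-}]-\EE_{U_n}[Z_n^{K,-}]$ within $\eta/2$ of $S(K'+1)-S(K)$, which combined with the Markov bound above yields the desired estimate $\le\eta$. The main delicacy is the passage through the infinite sum $\sum_\ell\hat\alpha_\ell$: its finiteness is the genuine content of the hypothesis, and one must also be careful that the threshold $K_0$ can be selected before committing to $n$, since the lemma allows $n$ to depend on both $K$ and $K'$. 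After that, the argument is bookkeeping of three nested limits in $n$, $K$, and $K'$.
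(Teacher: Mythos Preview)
The paper does not prove this lemma itself; it quotes it as \cite[Lemma~2]{HV19}. Your argument via Markov's inequality and Abel summation is correct and is essentially the standard route. One minor indexing slip: since $Z_n^{M,-}=\sum_{i=0}^{M-1}\II_{U_n}\circ f^i$ counts visits at times $0,\ldots,M-1$, on $U_n$ its level sets satisfy $\mu_{U_n}(Z_n^{M,-}=\ell)=\mu_{U_n}(\tau^{\ell-1}\le M-1<\tau^\ell)$, so the limiting probabilities are $\alpha_\ell(M-1)$ rather than $\alpha_\ell(M)$, and the boundary term that vanishes identically is $\hat\alpha_{M+1}(M-1)$ (one cannot fit $M$ distinct return times into $\{1,\ldots,M-1\}$), not $\hat\alpha_{M+1}(M)$. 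This does not affect anything: you still get $S(M)=\sum_{\ell\le M}\hat\alpha_\ell(M-1)$, and the dominated-convergence step with majorant $\hat\alpha_\ell$ and the Cauchy tail estimate go through exactly as written.
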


Finally, we give the relation between $\{\lambda_\ell\}$ and $\{\alpha_\ell\}$.

\begin{theorem}\cite[Theorem 2]{HV19}\label{t.1}
Assume that $U_n$ is a sequence of nested sets with $\mu(U_n)\to 0$. Assume that the limits in~\eqref{e.3.0} exist for $K$ large enough and every $\ell\ge 1$. Also assume that $\sum_{\ell=1}^{\infty}\ell\hat\alpha_\ell<\infty$.

Then
$$
\lambda_\ell=\frac{\alpha_\ell - \alpha_{\ell+1}}{\alpha_1},
$$ 
where $\alpha_\ell = \hat\alpha_\ell -\hat\alpha_{\ell+1}$. In particular, the limit defining $\lambda_k$ exists. Moreover, the average length of the cluster of entries satisfies
$$
\sum_{\ell=1}^{\infty}\ell\lambda_\ell = \frac{1}{\alpha_1}.
$$
\end{theorem}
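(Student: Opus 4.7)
My plan is to prove the core formula $\lambda_\ell = (\alpha_\ell - \alpha_{\ell+1})/\alpha_1$; the two sum identities are then formal consequences. From the formula, $\sum_\ell \lambda_\ell = \sum_\ell(\alpha_\ell-\alpha_{\ell+1})/\alpha_1 = 1$ telescopes (using $\alpha_\ell \le \hat\alpha_\ell \to 0$ under the summability hypothesis), and Abel summation yields $\sum_\ell \ell\lambda_\ell = (\sum_\ell \alpha_\ell)/\alpha_1 = \hat\alpha_1/\alpha_1 = 1/\alpha_1$. As an a priori cross-check, $\sum_\ell \ell\lambda_\ell(K,U_n) = (2K+1)\PP(U_n)/\PP(Z^K_n\ge 1)$ is immediate from $\EE[Z^K_n]=(2K+1)\PP(U_n)$ via $f$-invariance.

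The core formula follows from the two asymptotic estimates
\[
\PP(Z^K_n \ge 1) = (2K+1)\alpha_1\PP(U_n)(1+o(1)), \qquad \PP(Z^K_n = \ell) = (2K+1)(\alpha_\ell-\alpha_{\ell+1})\PP(U_n)(1+o(1))
\]
in the iterated limit $n\to\infty$, then $K\to\infty$, by taking the ratio. Both proceed from the same decomposition: writing
\[
\PP(Z^K_n = \ell) = \sum_{i_1=0}^{2K}\PP\bigl(\text{first visit to } U_n \text{ in } [0,2K] \text{ at time } i_1,\, W^{2K-i_1}_n(f^{i_1}x) = \ell\bigr),
\]
I would split the ``no earlier visit'' condition by the position $j$ of the last visit strictly before $i_1$ (if any), apply $f$-invariance (shift $j\to 0$) to rewrite each ``last-visit-at-$j$'' contribution as $\PP(U_n)\,\mu_{U_n}(\tau_{U_n} = i_1-j,\,\cdot)$, and then use $f^{\tau_{U_n}}$-invariance of the induced measure $\mu_{U_n}$ to collapse the resulting telescoping sum into the compact form
\[
\PP\bigl(\text{first visit at } i_1,\, W^{2K-i_1}_n(f^{i_1}x) = \ell\bigr) = \PP(U_n)\,\mu_{U_n}\bigl(\tau_{U_n} > i_1,\, W^{2K-i_1}_n(f^{\tau_{U_n}}y) = \ell\bigr).
\]
The denominator estimate follows by summing over $\ell$ (which reduces the right-hand side to $\mu_{U_n}(\tau_{U_n} > i_1)$) and then over $i_1$, using $\mu_{U_n}(\tau_{U_n} > i_1)\to\alpha_1$ as $i_1\to\infty$.

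For the numerator, the crucial limit is $\mu_{U_n}(\tau > i_1, W^m_n(f^\tau y) = \ell) \to \alpha_\ell - \alpha_{\ell+1}$ for $i_1, m \ge K_0$. Writing it as $\mu_{U_n}(W^m_n(f^\tau y)=\ell) - \mu_{U_n}(\tau\le i_1, W^m_n(f^\tau y)=\ell)$, the first term equals $\mu_{U_n}(W^m_n=\ell)\to\alpha_\ell$ by $f^{\tau_{U_n}}$-invariance. For the second, the exact identity $W^m_n(f^\tau y) = W^{m+\tau}_n(y) - 1$, valid since $y\in U_n$ has no return in $[1,\tau-1]$, converts it into $\mu_{U_n}(\tau\le i_1, W^{m+\tau}_n(y) = \ell+1)$. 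In the regime $\mu(U_n)\to 0$ the expected inter-cluster gap diverges, so at most one cluster fits any bounded-length window; this gives $\mu_{U_n}(\tau > K, W^{m+\tau}_n=\ell+1)\to 0$ for $\ell\ge 1$, and the second term converges to $\mu_{U_n}(W^m_n = \ell+1)\to\alpha_{\ell+1}$. Subtracting yields $\alpha_\ell - \alpha_{\ell+1}$. Summing over $i_1$ in the middle range $[K_0, 2K-K_0]$ gives the bulk $(2K+1)(\alpha_\ell-\alpha_{\ell+1})\PP(U_n)(1+o(1))$, while the boundary range contributes at most $O(K_0\PP(U_n))$, negligible after dividing by $(2K+1)\PP(U_n)$ once $K\gg K_0$.

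The main obstacle I foresee is rigorously justifying the ``at most one cluster per bounded window'' regime used to prove $\mu_{U_n}(\tau > K, W^{m+\tau}_n=\ell+1)\to 0$ and the effective equality of $W^{m+\tau}_n$ with $W^m_n$ on the relevant event. This is precisely where Lemma~\ref{l.2} (unlikely long clusters) and the summability $\sum_\ell \ell\hat\alpha_\ell < \infty$ are essential: the former bounds the contribution of clusters longer than any prescribed $K_0$ by an $\eta$ chosen arbitrarily small, while the latter provides the uniform summability needed to interchange limits over $\ell$. A secondary subtlety in the non-invertible case is neatly sidestepped by performing all decompositions via forward events involving the return time $\tau_{U_n}$ together with the $f^{\tau_{U_n}}$-invariance of $\mu_{U_n}$ on the induced system.
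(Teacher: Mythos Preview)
The paper does not prove this theorem; it is quoted verbatim from \cite[Theorem~2]{HV19}. The only trace of that proof here is the unnumbered ``byproduct'' lemma following Theorem~\ref{t.1}, which shows that the argument in \cite{HV19} anchors at the \emph{middle} index $K$ and controls the joint law of $(Z^{K,-}_n,Z^{K,+}_n)$ on $\{\II^K=1\}$, rather than anchoring at the first visit as you do. Your first-visit identity
\[
\PP\bigl(\text{first visit at } i_1,\, W^{2K-i_1}_n(f^{i_1}x) = \ell\bigr)=\mu(U_n)\,\mu_{U_n}\bigl(\tau>i_1,\,W^{2K-i_1}_n(f^\tau)=\ell\bigr)
\]
is correct and elegant (and genuinely forward-only, as you note), so the strategy is sound and comparable in spirit; the denominator reduction to Lemma~\ref{l.3} is also fine.

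There is, however, a concrete error in your treatment of the second term $B=\mu_{U_n}(\tau\le i_1,\,W^{m+\tau}_n=\ell{+}1)$. You claim $\mu_{U_n}(\tau>K,\,W^{m+\tau}_n=\ell{+}1)\to 0$; this is false. Since $W^{m+\tau}_n(y)=1+W^m_n(f^\tau y)$ always, that quantity equals $\mu_{U_n}(\tau>K,\,W^m_n(f^\tau)=\ell)$, which for $\ell=1$ is $\mu_{U_n}(\tau>K,\,\tau\circ f^\tau>m)$ and has a strictly positive limit (of order $\alpha_1^2$ in the periodic case), not zero. The ``one cluster per bounded window'' heuristic does not apply: on $\{\tau>K\}$ the window $[0,m+\tau]$ is not bounded.

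The fix uses exactly the tool you already cite, Lemma~\ref{l.2}, but in a different place. First compare $W^{m+\tau}_n$ with $W^{m}_n$ on $\{\tau\le i_1\}$: the difference is a visit in $(m,m+\tau]\subset(m,m+i_1]$, which Lemma~\ref{l.2} bounds by $\eta$ once $m\ge K_0$. This gives $|B-\mu_{U_n}(\tau\le i_1,\,W^m_n=\ell{+}1)|\le\eta$. Then observe that $\{W^m_n(y)=\ell{+}1\}$ forces $\tau(y)\le\tau^\ell(y)\le m$ when $\ell\ge1$, so $\mu_{U_n}(\tau>i_1,\,W^m_n=\ell{+}1)\le\mu_{U_n}(\tau\in(i_1,m])\le\eta$ for $i_1\ge K_0$, again by Lemma~\ref{l.2}. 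Combining, $B=\mu_{U_n}(W^m_n=\ell{+}1)+O(\eta)\to\alpha_{\ell+1}$, and your formula follows. So the gap is local and repairable, but the step as written does not go through.
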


Note that by~\eqref{e.3'}, $\lambda_\ell\ge 0$ as long as they exist. This in turn shows that $\{\alpha_{\ell}\}$ is a non-increasing sequence in $\ell$, which, surprisingly enough, cannot be easily seen from their definitions. We also have $\sum_\ell \lambda_\ell = 1$ due to the telescoping sum.

The following lemma is a byproduct of the proof of the previous theorem. Write $\II^i = \II_{U_n}\circ f^i$, we get:
\begin{lemma}
For every $\eta>0$, we have
\begin{align*}
&\left|\PP(Z^{K,-}_n=k,Z_n^{K,+}=\ell-k, \II^K=1)-\PP(Z_n^{K,-}=k',Z_n^{K,+}=\ell-k', \II^K=1)\right|\\
&\le \eta\mu(U_n)
\end{align*}
for all $0\le k,k'<\ell$, provided that $K$ and $n$ are large enough.

\end{lemma}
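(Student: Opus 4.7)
The plan is to exhibit $\mu(A_k):=\PP(Z^{K,-}_n=k,\,Z^{K,+}_n=\ell-k,\,\II^K=1)$, for different $k\in\{0,\ldots,\ell-1\}$, as different labelings of the same underlying family of short clusters, and to use $f$-invariance together with the short-cluster tail bound from Lemma~\ref{l.2} to show these labelings give essentially the same measure. The intuitive picture is that, conditional on $\II^K=1$, the remaining $\ell-1$ entries of the cluster concentrate in a bounded window around $K$; which of those entries is declared the ``pivot'' at time $K$ is then a cosmetic choice that should not change the measure by much.

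The first step is to decompose $\mu(A_k)$ as a sum over abstract cluster shapes $\mathcal{S}=\{s_1<\cdots<s_\ell\}\subset[0,2K]$ with $s_{k+1}=K$:
$$\mu(A_k)=\sum_{\mathcal{S}:\,s_{k+1}=K}\mu(E_{\mathcal{S}}),$$
where $E_{\mathcal{S}}$ is the event of $U_n$-entries occurring exactly at the times in $\mathcal{S}$. By $f$-invariance, $\mu(E_{\mathcal{S}})$ depends only on the gap vector $(s_{i+1}-s_i)_i$. Next, Lemma~\ref{l.2} applied at $\II^K=1$ (and symmetrically to the time-reversed gap vector, which carries equal measure by $f$-invariance) truncates the sum, up to an error $\eta\mu(U_n)$, to shapes with $s_\ell-s_1\le 2K_0$ for some $K_0=K_0(\eta)$. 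On such short shapes, the number of translations of $\mathcal{S}$ that land $s_{k+1}$ at $K$ equals $2K-(s_\ell-s_1)+1$ up to $O(K_0)$ boundary corrections, and crucially this count is independent of which internal entry of $\mathcal{S}$ is labeled the pivot. Consequently $\mu(A_k)$ and $\mu(A_{k'})$ coincide, modulo $O(\eta)\mu(U_n)$ boundary terms, with the same sum over short cluster shapes. The hypothesis $\sum_\ell \ell\hat\alpha_\ell<\infty$ then keeps the total truncation error below $\eta\mu(U_n)$ uniformly in $\ell$.

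The main obstacle is making the pivot-relabeling step quantitative while respecting that the window $[0,2K]$ is fixed. It is Lemma~\ref{l.2} that provides the decoupling: a cluster of diameter $\ll K$ fits into $[0,2K]$ with its pivot at time $K$ in essentially the same number of ways regardless of whether the pivot is the first, last, or a middle entry of $\mathcal{S}$. In the non-invertible setting one must be careful to argue purely at the level of measures and gap vectors, since a genuine backward shift of orbits is unavailable; fortunately $f$-invariance already determines $\mu(E_{\mathcal{S}})$ entirely from the gap vector, and the required bounds on backward clusters follow from Lemma~\ref{l.2} via this gap-vector representation.
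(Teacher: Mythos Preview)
Your overall strategy---decompose over cluster shapes, truncate to short clusters via Lemma~\ref{l.2}, and compare different pivot positions via stationarity---is the right one and is indeed how the argument in \cite{HV19} proceeds. However, the execution contains two genuine errors that prevent the proof from going through as written.

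First, the assertion that ``$\mu(E_{\mathcal{S}})$ depends only on the gap vector $(s_{i+1}-s_i)_i$'' is false. The event $E_{\mathcal{S}}$ specifies not only the entry times $\mathcal{S}$ but also the \emph{non-entry} times $[0,2K]\setminus\mathcal{S}$, and the latter depends on where $\mathcal{S}$ sits inside $[0,2K]$, not just on its internal gaps. For the same gap vector $g$, the events $E_{\mathcal{S}(g,k+1)}$ and $E_{\mathcal{S}(g,k'+1)}$ share the entry pattern (up to a shift by some $\delta\le d(g)$) but impose non-entry constraints on \emph{different} windows, namely $[0,2K]$ versus $[\delta,2K+\delta]$. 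Their measures therefore need not agree exactly; what one must show is that they agree \emph{approximately}, with an error controlled by the probability that $X_K=1$ and there is an additional entry in the boundary strip $[0,\delta)\cup(2K,2K+\delta]$.

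Second, the sentence about ``the number of translations of $\mathcal{S}$ that land $s_{k+1}$ at $K$ equals $2K-(s_\ell-s_1)+1$'' is incorrect: for a fixed gap vector and fixed $k$ there is exactly one such translate. This count plays no role in the comparison of $\mu(A_k)$ with $\mu(A_{k'})$.

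The repair is as follows. After truncating via Lemma~\ref{l.2} to clusters of diameter $\le 2K_0$ (this requires the forward bound from Lemma~\ref{l.2} and, for the backward direction, an expectation bound using $\sum_\ell\hat\alpha_\ell<\infty$), one uses $f$-invariance to shift $E_{\mathcal{S}(g,k'+1)}$ by $\delta(g)\le 2K_0$ so that its entry pattern coincides with that of $E_{\mathcal{S}(g,k+1)}$. The symmetric difference of the two events is then contained in $\{X_K=1\}\cap\{\text{entry in }[0,2K_0)\cup(2K,2K+2K_0]\}$. Since the events $E_{\mathcal{S}(g,k+1)}$ (and their shifted counterparts) are pairwise disjoint over $g$, the sum over $g$ of these errors is bounded by $\PP(X_K=1,\ \text{entry in }[0,2K_0)\cup(2K,2K+2K_0])$, which is $\le\eta\mu(U_n)$ by Lemma~\ref{l.2} (for the forward piece) and the expectation argument (for the backward piece), once $K$ is taken large relative to $K_0$. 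This is the step your sketch was missing.
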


To conclude this section, we introduce the next lemma on the entry times (note that the probability below is NOT conditioned on $U_n$), which will be used to show the convergence of the parameters of the compound Poisson distribution:

\begin{lemma}\cite[Lemma 3]{HV19}\label{l.3} Under the assumptions of Theorem~\ref{t.1}, we have
$$
\lim_{K\to\infty}\lim_{n\to\infty}\frac{\PP(\tau_{U_n}\le K)}{K\mu(U_n)}=\alpha_1.
$$
\end{lemma}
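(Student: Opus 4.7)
The plan is to use a Kac-style last-visit decomposition to rewrite the numerator $\PP(\tau_{U_n}\le K)$ as a Cesàro average over tail probabilities of the first entry time, and then pass to the double limit using the standing assumption on $\hat\alpha_2(m)$.

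First, by invariance of $\mu$,
$$\PP(\tau_{U_n}\le K) = \mu\Bigl(\bigcup_{i=1}^{K} f^{-i}(U_n)\Bigr) = \mu\Bigl(\bigcup_{i=0}^{K-1} f^{-i}(U_n)\Bigr).$$
I would decompose the right-hand union disjointly according to the largest index $i\in\{0,\ldots,K-1\}$ for which $f^i(x)\in U_n$. On the piece where this last index equals $i$ one has $f^i(x)\in U_n$ together with $f^j(x)\notin U_n$ for all $i<j\le K-1$, the latter condition being precisely $\tau_{U_n}(f^i(x))>K-1-i$. Invariance of $\mu$ then gives
$$\mu(\{\text{last index}=i\}) = \mu\bigl(U_n\cap\{\tau_{U_n}>K-1-i\}\bigr) = \mu(U_n)\,\mu_{U_n}(\tau_{U_n}>K-1-i),$$
and summing over $i$ produces the Kac-type identity
$$\PP(\tau_{U_n}\le K) = \mu(U_n)\sum_{m=0}^{K-1}\mu_{U_n}(\tau_{U_n}>m),$$
so that the ratio of interest becomes the Cesàro average
$$\frac{\PP(\tau_{U_n}\le K)}{K\,\mu(U_n)} = \frac{1}{K}\sum_{m=0}^{K-1}\mu_{U_n}(\tau_{U_n}>m).$$

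Next I would take $n\to\infty$ inside this finite sum. Under the standing hypothesis, $\hat\alpha_2(m)=\lim_n\mu_{U_n}(\tau_{U_n}\le m)$ exists for every $m$ above some fixed $K_0$, so $\mu_{U_n}(\tau_{U_n}>m)\to 1-\hat\alpha_2(m)$ for such $m$; the finitely many terms with $m<K_0$ are bounded by $1$ and contribute at most $K_0/K$ to the Cesàro average. Sending $K\to\infty$, the Cesàro averages of the convergent sequence $(1-\hat\alpha_2(m))_{m\ge K_0}$ tend to $1-\lim_m\hat\alpha_2(m)=1-\hat\alpha_2=\alpha_1$, using $\hat\alpha_1=1$ and $\alpha_\ell=\hat\alpha_\ell-\hat\alpha_{\ell+1}$ as recorded in~\eqref{e.3.2}.

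The only delicate point is that for small $m$ the inner limit in $n$ may fail to exist, but this is easily handled by bounding both $\limsup_n$ and $\liminf_n$ of the Cesàro average by the same expression up to a $K_0/K$ error term, and observing that their common $K\to\infty$ limit equals $\alpha_1$. Beyond this bookkeeping, the proof is just the disjoint last-visit decomposition combined with Cesàro convergence, and I do not foresee any substantial analytic obstacle.
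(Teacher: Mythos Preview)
The paper does not supply its own proof of this lemma; it is quoted verbatim from~\cite{HV19} and simply cited there as \cite[Lemma~3]{HV19}. So there is no in-paper argument to compare against. Your approach is correct and is in fact the natural one: the last-visit decomposition yielding the Kac-type identity
\[
\PP(\tau_{U_n}\le K)=\mu(U_n)\sum_{m=0}^{K-1}\mu_{U_n}(\tau_{U_n}>m)
\]
is standard, and the subsequent Ces\`aro argument combined with $1-\hat\alpha_2(m)\to 1-\hat\alpha_2=\alpha_1$ is exactly what is needed. Your handling of the finitely many terms $m<K_0$ via $\limsup/\liminf$ is the right way to deal with the fact that the paper only assumes $\hat\alpha_2(m)$ exists for $m$ large; this shows $\lim_K\limsup_n=\lim_K\liminf_n=\alpha_1$, which is all that is ever used downstream (see Remark~\ref{r.2}).
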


\subsection{Compound Poisson distribution and a compound binomial approximation theorem}\label{ss.3.2}
Here we review the general properties of compound Poisson distributions and state the approximation theorem that was proven in~\cite{HV19}.

A probability measure $m$ on $\NN_0=\NN\cup\{0\}$ is compound Poisson distributed with parameters $\{s\lambda_\ell:\ell\ge 1\}$,  if the probability generating function $\vp_m$ is given by
$$
\vp_m(z)=\exp(\int_0^\infty(z^x-1)\,d\rho(x)),
$$
where $\rho$ is the measure on $\NN$ defined by $\rho = \sum_{\ell}s\lambda_\ell\delta_\ell$; here $\delta_\ell$ is the point mass at $\ell$. If we write $L=\sum_{\ell}s\lambda_\ell$, then $L^{-1}\rho$ becomes a probability measure. Let $P$ be a Poisson random variable with parameter $L$, and $Z_j,j=1,\ldots$ an i.i.d. sequence of random variables with 
$$
\PP(Z_j=\ell)=\lambda_\ell = L^{-1}s\lambda_\ell.
$$
Then the random variable $W=\sum_{j=1}^{P}Z_j$ is a compound Poisson distribution. We will refer to $P$ as the {\em Poisson part}, and $Z_j$ as the {\em compound part} of $W$. If we have in addition that $\sum_{\ell\ge 1}\lambda_\ell=1$ (which is the case in this paper due to Theorem~\ref{t.1} and the remark afterward), then $L=s$, and $\EE(W)=s\EE(Z_1)$. Moreover, we will see later (Remark~\ref{r.2}) that $s=\tau\alpha_{1}$, which is the desired parameter for Theorem~\ref{m.2} and~\ref{m.1}.

Just like the classical Poisson distribution can be approximated by binomial distributions, compound Poisson distribution can be approximated by {\em compound binomial distributions} with the same compound part. For this purpose, we take a large integer $N$, a parameter $s>0$ and put $p=s/N$. Let $Q$ be a binomially distributed random variable with parameters $(N,p)$, and define 
$$
W'=\sum_{j=1}^{Q}Z_j,
$$
where $Z_j's$ are i.i.d. random variables as before. $W'$ has generating function $\vp_{W'}(z)= (p(\vp_{Z_1}-1)+1)^N$, where $\vp_{Z_1}(z)=\sum_{\ell}z^\ell\lambda_\ell$ is the generating function of $Z_1.$ Note that as $N$ tends to infinity, $Q$ converges to a Poisson distribution with parameter $s$, thus $W'$ will converge to a compound Poisson distribution $W$ with parameters $\{s\lambda_\ell\}$. This can be easily proven by checking the convergence of the generating function.

The following theorem gives the convergence of a dependent, stationary $\{0,1\}$-valued process to a compound binomial distribution:

\begin{theorem}\cite[Theorem~3]{HV19}\label{t.2}
Let $\{X_n\}_{n\in\NN}$ be a stationary $\{0,1\}$-valued process and $W^N=\sum_{i=0}^NX_i$ for some large integer $N$. Let $K,\Delta$ be positive integers such that $\Delta(2K+1) < N$ and define 
$Z=\sum_{i=0}^{2K}X_i$, $W_a^b=\sum_{i=a}^bX_i$. Let $\tilde{m}$ be the compound binomial distribution measure where the binomial distribution has values $p=\PP(Z\ge 1)$ and $N'=N/(2K+1)$, and the compound part has probabilities $\lambda_\ell=\PP(Z=\ell)/p$. 

Then there exists a constant $C$, independent of $K$ and $\Delta$, such that
$$
|\PP(W^N=k)-\tilde{m}(\{k\})|\le C(N'(\cR_1+\cR_2)) + \Delta\PP(X_0=1),
$$
where
\begin{flalign*}
\cR_1=\sup\limits_{\substack{M\in[\Delta,N']\\q\in(0,N'-\Delta)}}\Bigg|\sum_{u=1}^{q-1}\Big(\PP\Big(Z=u\,\wedge\,& W^{M(2K+1)}_{\Delta(2K+1)}=q-u\Big)-&\\
&\PP(Z=u)\PP\left(W^{M(2K+1)}_{\Delta(2K+1)}=q-u\right)\Big)\Bigg|,&
\end{flalign*}
and
\begin{flalign*}
\cR_2=\sum_{n=2}^{\Delta}\PP&\left(Z\ge 1 \,\wedge\,Z\circ f^{(2K+1)n}\ge 1\right).&
\end{flalign*}
\end{theorem}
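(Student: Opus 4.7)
The plan is to use a block decomposition of $W^N$ combined with a Lindeberg-style replacement argument that swaps the (dependent) block sums for i.i.d.\ copies one block at a time. First I would partition $\{0,1,\ldots,N\}$ into $N' = N/(2K+1)$ consecutive blocks of length $2K+1$, and define the block sums $Y_j = \sum_{i=(j-1)(2K+1)}^{j(2K+1)-1} X_i$. By stationarity each $Y_j$ has the same distribution as $Z$, and $W^N = \sum_{j=1}^{N'} Y_j$. A short generating-function computation identifies $\tilde m$ as the distribution of $\sum_{j=1}^{N'} Y_j'$ where $Y_1',\ldots,Y_{N'}'$ are i.i.d.\ with $Y_j' \stackrel{d}{=} Z$: indeed $\PP(Y_j'=0) = 1-p$ and $\PP(Y_j'=\ell) = p\lambda_\ell = \PP(Z=\ell)$ for $\ell\ge 1$. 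So the theorem reduces to bounding $|\PP(\sum_j Y_j = k) - \PP(\sum_j Y_j' = k)|$.

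For the replacement step, I would telescope through the $N'$ positions, swapping one $Y_j$ for an independent $Y_j'$ at a time. Fix $j$ and write $S_{<j} = Y_1' + \cdots + Y_{j-1}'$; conditioning on $\{Y_j = u\}$ and summing on $u$, the single-swap error is controlled by the defect in independence between $Y_j$ and $S_{<j}$. I would then split $S_{<j}$ into a far part $S^{\mathrm{far}}$ over blocks $1,\ldots,j-\Delta-1$ and a near part $S^{\mathrm{near}}$ over blocks $j-\Delta,\ldots,j-1$. The far part is separated from block $j$ by a gap of $\Delta$ blocks, and the quantity to be estimated is exactly $\bigl|\PP(Z=u \wedge W^{M(2K+1)}_{\Delta(2K+1)} = q-u) - \PP(Z=u)\PP(W^{M(2K+1)}_{\Delta(2K+1)} = q-u)\bigr|$, uniformly in the parameters, which is precisely $\cR_1$. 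Iterating over the $N'$ positions produces the $N'\cR_1$ term.

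The near part $S^{\mathrm{near}}$ is where $\cR_2$ enters. Observe that whenever $Y_j \ge 1$, the near part contributes zero unless at least one of $Y_{j-\Delta},\ldots,Y_{j-2}$ is also $\ge 1$; by stationarity and a union bound the probability of such a joint event is at most $\sum_{n=2}^{\Delta}\PP(Z\ge 1 \wedge Z\circ f^{(2K+1)n}\ge 1) = \cR_2$. Thus we may replace $S^{\mathrm{near}}$ by $0$ at the cost of $\cR_2$ per swap, yielding the $N'\cR_2$ contribution. The remaining $\Delta\,\PP(X_0=1)$ term absorbs the first $\Delta$ blocks (where no gap of size $\Delta$ exists on the left), since each $Y_j$ has expectation $(2K+1)\PP(X_0=1)$ but we only need the much cheaper bound $\PP(Y_j \ge 1) \le (2K+1)\PP(X_0=1)$ to discard those boundary blocks; after rescaling to count blocks rather than indices this gives the claimed $\Delta\,\PP(X_0=1)$.

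The main technical obstacle is that $Z$ is \emph{integer-valued} rather than $\{0,1\}$-valued, so swapping $Y_j$ for $Y_j'$ shifts the partial sum by a random integer $u$ and one must split on the level sets $\{Y_j = u\}$ and $\{Y_j' = u\}$ simultaneously. This is exactly why $\cR_1$ is formulated as a supremum over values of $u$ (and over the window parameters $M,q$) rather than the simpler event $\{Y_j\ge 1\}$. Propagating this uniform-in-$u$ bound through the Lindeberg swap while keeping the final error \emph{additive} (linear in $N'$) rather than multiplicative, and extracting a constant $C$ independent of $K$ and $\Delta$, is the delicate part of the bookkeeping; once done, collecting the three error sources gives the stated inequality.
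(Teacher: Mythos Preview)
The paper does not prove this theorem at all: it is quoted verbatim from \cite[Theorem~3]{HV19} and used as a black box (see the remarks following the statement). So there is no ``paper's own proof'' to compare against.

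Your Lindeberg-swap strategy is the standard route to such compound-binomial approximation theorems (it is, in essence, how \cite{HV19} and the earlier \cite{CC} proceed), and the identification of $\tilde m$ with the law of a sum of $N'$ i.i.d.\ copies of $Z$ is correct. However, your description has the orientation backwards. You write $S_{<j}=Y_1'+\cdots+Y_{j-1}'$ using the already-replaced primed variables and then speak of the ``defect in independence between $Y_j$ and $S_{<j}$''; but the $Y_i'$ are independent of everything by construction, so there is no defect there. What must be controlled at step $j$ is the dependence between $Y_j$ and the \emph{not-yet-replaced future} $Y_{j+1},\ldots,Y_{N'}$, and indeed $\cR_1$ is formulated with the window $W_{\Delta(2K+1)}^{M(2K+1)}$ lying to the \emph{right} of block~$0$. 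The same reversal appears in your treatment of the near part: the relevant nearby blocks are $Y_{j+1},\ldots,Y_{j+\Delta}$, not $Y_{j-\Delta},\ldots,Y_{j-2}$. Once you flip the direction, the sketch is coherent; the bookkeeping you flag (keeping the error additive in $N'$ and uniform in $K,\Delta$) is genuine but routine.
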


\begin{remark}\label{r.2}
If one takes a sequence of nested sets $\{U_n\}$ with $\mu(U_n)\to 0$, then the parameters of the binomial part are $p= \PP(\tau_{U_n}\le 2K)$ and $N' = \frac{\tau}{(2K+1)\mu(U_n)}$. Then as $n\to\infty$ then $K\to\infty$, the binomial part will converge to a Poisson distribution with parameter:
$$
s=\lim_K\lim_n pN' = \tau\lim_K\lim_n\frac{\PP(\tau_{U_n}\le 2K)}{(2K+1)\mu(U_n)}=\tau\alpha_{1},
$$
due to Lemma~\ref{l.3}. As a result, the parameters of the compound part will converge to $s\lambda_\ell = \tau\alpha_{1}\lambda_\ell$, as desired. 
\end{remark}

\begin{remark}
This theorem and its proof are similar to the abstract Poisson approximation theorem by Collet and Chazottes~\cite{CC}, where the error terms $\cR_1$ and $\cR_2$ are also similar to the error terms in the classical Chen-Stein method by Arratia el al~\cite{AGG}. A Chen-Stein method approach to the compound Poisson distribution is also under development by Gallo, Haydn and Vaienti~\cite{GHV}.

However, we would like to point out that the Chen-Stein method may not be suitable for invertible maps with Young's towers, due to the gap in both error terms being opened towards the past, making it difficult to apply the decay of correlations. 
\end{remark}

\begin{remark}\label{r.3}
The error terms $\cR_1$ and $\cR_2$ are similar to the conditions $D_p(u_n)^*$ and $D'_p(u_n)^*$ used by Freitas et al in~\cite{FFT13}. As we will see later, $\cR_1$ can be verified similar to $D_p$ using decay of correlations against $L^\infty$ functions. On the other hand, the proof of $D'_p$ in~\cite{FFT13} requires decay of correlations against $L^1$ functions, which does not hold for Young's towers with less than exponential tail. This is because decay of correlations against $L^1$ functions at summable rate implies the decay of correlations again all $L^\infty$ functions with exponential rate~\cite[Theorem B]{AFLV}. 

However, as we will see in later sections, the error term $\cR_2$ is very easy to verify due to the desynchronization between $K$ and $n$.

\end{remark}

\subsection{Young's towers}\label{ss.3.3}
Young's towers, also known as the Gibbs-Markov-Young structure, is first introduced by Young in~\cite{Y2} and~\cite{Y3} as a discrete time suspension over a countable Markov map. The base of the tower is constructed in a way such that every time a partition set returns, it will be mapped to the entire base, with well controlled hyperbolicity and distortion estimates. It turns out that the decay of correlations for the tower depends on the time it takes for points to return. The first paper,~\cite{Y2}, deals with the local diffeomorphisms on compact manifolds whereas the second paper,~\cite{Y3}, contains a more abstract setting for non-invertible systems. Below, we will discuss these two cases separately.

\subsubsection{The non-invertible case}
In this subsection we assume that  $f$ is a differentiable map of a
Riemannian manifold $M$. Assume that there is a subset $\Omega_0\subset M$ with the following properties:\\
(i) $\Omega_0$ is partitioned into disjoint sets $\Omega_{0,i}, i=1,2,\dots$ and
there is a {\em return time function} $R:\Omega_0\rightarrow\mathbb{N}$,  constant on
the partition elements $\Omega_{0,i}$,  such that $f^R$ maps $\Omega_{0,i}$ bijectively
to the entire set $\Omega_0$. We write $R_i=R|_{\Omega_{0,i}}$.\\
(ii) For $j=1,2,\dots, R_i-1$ put $\Omega_{j,i}=\{(x,j): x\in\Omega_{0,i}\}$
and define $\Omega =\bigcup_{i=1}^\infty\bigcup_{j=0}^{R_i-1}\Omega_{j,i}$. Note that $\{(x,0): x\in\Omega_{0,i}\}$ can be naturally identified with $\Omega_{0,i}$.
$\Omega$ is called the {\em Markov tower} for the map $f$. It has the associated partition
$\mathcal{A}=\{\Omega_{j,i}:\; 0\le j<R_i, i=1,2,\dots\}$ which typically is countably infinite.
The map $F: \Omega\to\Omega$ is given by
$$\
F(x,j)=\left\{\begin{array}{ll}(x,j+1) &\mbox{if } j<R_i-1\\
(Tx,0)&\mbox{if } j=R_i-1\end{array}\right.
$$
where we put $T=f^R$ for the induced map on $\Omega_0$. If we denote by $\pi_\Omega:\Omega\to \bM$, $\pi_\Omega((x,j))=f^j(x)$ then $\pi_\Omega$ semi-conjugates $F$ and $f$.\\
(iii) Non-uniformly expanding: there is $0<\kappa<1$ such that for all $x,y\in\Omega_{0,i}$, $d(Tx,Ty)>\kappa^{-1}d(x,y)$. Moreover, there is
$C>0$ such that $d(f^kx,f^ky)\le Cd(Tx, Ty)$ for all $x,y\in\Omega_{0,i}$ and $0\le k < R_i$.
\\
(iv) The {\em separation time function} $s(x,y)$,  $x,y\in\Omega_0$, is defined as the largest positive
$n$ so that $(f^R)^jx$ and  $(f^R)^jy$ lie in the same sub-partition elements
for $0\le j<n$, i.e. $(f^R)^jx, (f^R)^jy\in\Omega_{0,i_j}$
for some $i_0,i_1,\dots,i_{n-1}$ while $(f^R)^jx$ and $(f^R)^jy$ belong to different $\Omega_{0,i}$'s.
We extend the separation time function to all of $\Omega$
by putting $s(x,y)=s(F^{R-j}x,F^{R-j}y)$ for $x,y\in\Omega_{j,i}$.\\
(v) There is a finite given `reference' measure on $\Omega_0$ which can be lifted to $\Omega$ by $F$. We denote the measure on $\Omega_0$ by $\nu_0$ and the lifted measure by $\nu$, and assume that the Jacobian $JF=\frac{d(F^{-1}_*\nu)}{d\nu}$
is  H\"older continuous in the following sense:
there exists a $\lambda\in(0,1)$ so that
$$
\left|\frac{Jf^Rx}{Jf^Ry}-1\right|\le C_2 \lambda^{s(Tx,Ty)}
$$
for all $x,y\in \Omega_{0,i}$, $i=1,2,\dots$.

The reference measure on $\Omega_0$ is often taken to be the Riemannian volume restricted to $\Omega_0$. If the return time $R$ is integrable with respect to  $\nu_0$, i.e., $$\int_{\Omega_0} R\,d\nu_0<\infty,$$ then by~\cite[Theorem~1]{Y3}
, there exists an $F$-invariant probability measure
$\tilde{\mu}$ on $\Omega$ which is absolutely continuous with respect to $\nu$. Then the pushed forward measure $\mu = \pi_*\tilde{\mu}$ is a measure on $\bM$ which is absolutely continuous with respect to the Riemannian volume.

When the return time function $R$ is the first return time of $x$ to the base $\Omega_0$, i.e., $R(x) = \tau_{\Omega_0}(x)$, then we say that the tower is defined using the first return map. In this case, the semi-conjugacy $\pi$ indeed conjugates the tower with the real dynamics.

The set $\{x:(R(x)>k)\}$ is usually referred to as {\em the tail of the tower}. It has been shown in~\cite{Y3} that if $\nu_0(R>k)\le  Ck^{-p}$ for some $C>0$ and $p>1$, then the system has {decay of correlations} for H\"older (or Lipschitz) functions against $L^\infty$ functions at polynomial rate: let $C^{\gamma}$ be the space of $\gamma$-H\"older functions from $\bM$ to $\RR$; then for any functions $\phi\in C^{\gamma}$ and $\psi\in L^\infty$, we have
\begin{equation}\label{e.decay1}
\left|\int_\bM\phi\cdot\psi\circ f^k\,d\mu-\int_\bM\phi\,d\mu\int_\bM\psi\,d\mu\right|\le C\|\phi\|_\gamma\|\psi\|_{L^\infty} \cC(k),
\end{equation}
where $\cC(k)$ is a positive, decreasing sequence with $\cC(k)\to 0$ as $k\to\infty$, with rate depending on $\nu_0(R(x)>k)$.

\subsubsection{The invertible case}
Next we consider the invertible case. We refer the readers to~\cite{Y2} and~\cite{AA} for the precise definition. Roughly speaking, a (local) diffeomorphism $f$ is modeled by Young's towers if there exists a set $\Lambda$ and two continuous families $\Gamma^s = \{\gamma^s_x\}$ and $\Gamma^u=\{\gamma^u_x\}$ of smooth stable and unstable disks with $\dim\gamma^s+\dim\gamma^u=\dim \bM$, such that $\Lambda$ consists of points that are the (unique) transverse intersection of disks in  $\Gamma^s$ and $\Gamma^u$. Without loss of generality, we assume that the diameter of all the disks in $\Gamma^s$ and $\Gamma^u$ are between $1/2$ and $1$.

It is then assumed that there is a partition of $\Gamma^s=\cup_i\Gamma^s_i$. One should think of each $\Lambda_i$ as the `product' of the $\Omega_{0,i}$ with entire stable disks. If we denote by $\Lambda_i$ the intersection of disks in $\Gamma^s_i$ with disks in $\Gamma^u$, then $\{\Lambda_i\}$ is a partition of $\Lambda$. Consider the {\em return time function} $R$, which is a function that is constant on each  $\Lambda_i$ (thus $R$ is constant on every stable disk), such that $f^{R}(\Lambda_i)$ consists of entire $u$-disks intersecting with $\Lambda$. In particular, this means that $f^R$ has the Markov property:
$$
f^R(\gamma^s(x))\subset \gamma^s(f^R(x)), \mbox{ and } f^R(\gamma^u(x))\supset \gamma^u(f^R(x)).
$$

Similar to (iii) of the non-invertible towers, we assume that on unstable disks, $f$ is backward contracting at polynomial rate: 
\begin{equation}\label{e.exp}
\forall \gamma^u\in\Gamma^u, x,y\in\gamma^u, n\ge 0, \mbox{ we have }d(f^{-n}x, f^{-n}y )\le \frac{C}{n^\alpha}.
\end{equation}

Similarly, $f$ is forward contracting at polynomial rate along stable disks:\footnote{In most examples (such as those in~\cite{Y2},~\cite{AL} and~\cite{Y19}), the contracting rates along both stable and unstable disks are indeed exponential. This is because the measures in question are usually hyperbolic (i.e., all the Lyapunov exponents are non-zero), and the return map is defined using `hyperbolic times'. To be more precise, for $\eta\in(0,1)$, a positive integer $n$ is called a $(\eta,u)$-hyperbolic time of $x$, if for every $0\le k<n$, we have $\Pi_{j=k}^{n} \|Df^{-1}(f^j(x)|_{E^u})\|\le C \eta^{n-j}$. $(\eta,s)$-hyperbolic times are defined similarly using the forward iterations of $f$. 

Every hyperbolic measure has plenty of hyperbolic times for typical points of the measure, due to the Pliss lemma.  Also note that if $n$ is a hyperbolic time of $x$ and $m$ is a hyperbolic time of $f^n(x)$, then $n+m$ is a hyperbolic time of $x$. Therefore, one can ask the contracting estimate to hold for {\em every} $n\ge 0$ as long as all the return times $R(x)$ are hyperbolic times of $x$ (or if they are close to a hyperbolic time). In this case, the measure of the tail of the tower, $\nu(R(x)\ge k)$, coincides with the tail of the hyperbolic times $\nu(n(x)\ge k)$, where $n(x)$ is the first hyperbolic time along the orbit of $x$. We refer the readers to~\cite{AL} and~\cite{Y19} for more details on hyperbolic times and how to use them to construct Young's towers.} 
\begin{equation}\label{e.cont}
\forall \gamma^s\in\Gamma^s, x,y\in\gamma^s, n\ge 0, \mbox{ we have }d(f^{n}x, f^{n}y )\le \frac{C}{n^\alpha}.
\end{equation}
Note that such contracting/expanding rate only applies to disks in $\Gamma^s$ and $\Gamma^u$, which are usually only defined inside a very small open ball in $M$.

The separation function $s(x,y)$ is defined in a similar way as in the non-invertible case, with the extra assumption that $s(x,y)$ only depends on the stable disks that contain $x$ and $y$.
The reference measure $\nu$ is usually taken such that the conditional measures of $\nu$ are the restriction of the Riemannian volume on the unstable disks, which we denote by $\nu_{\gamma^u}$. Then it is assumed that the Jacobian of the return map,  $Jf^R|_{\gamma^u}$, is H\"older continuous: for every $\gamma^u\in\Gamma^u$ and $x,y\in\gamma^u$,
$$
\log\frac{Jf^R|_{\gamma^u}(x)}{Jf^R|_{\gamma^u}(y)}\le \beta^{s(f^R(x),f^R(y))}.
$$

We also need the Jacobian of the holonomy map along stable disks, denoted by $\Theta_{{\gamma^u}',\gamma^u}:{\gamma^u}'\cap\Lambda\to\gamma^u\cap\Lambda$, to be absolutely continuous with respect to the reference measure $\nu_{{\gamma^u}'}$.

It is shown in~\cite{Y2} and~\cite{AA} that under the above assumptions, if the return time function $R$ is integrable with respect to some $\nu_{\gamma^u}$, then there exists a measure $\mu_0$, supported on $\Omega_0$, whose conditional measures along $\gamma^u$ are absolutely continuous with respect to  $\nu_{\gamma^u}$.  $\mu_0$ can be lifted to a measure $\mu$ on the entire tower, which is an SRB measure. Moreover, the system has decay of correlations for H\"older functions against $L^\infty$ functions that are constant on stable disks: if $\nu_{\gamma^u}(R>k)\le C k^{-p}$ for some $\gamma^u\in\Gamma^u$, $C>0$ and $p>1$, then one has
\begin{equation}\label{e.decay2}
\left|\int_\bM\phi\cdot\psi\circ f^k\,d\mu-\int_\bM\phi\,d\mu\int_\bM\psi\,d\mu\right|\le C\|\phi\|_\gamma\|\psi\|_{L^\infty} \cC(k),
\end{equation}
for $\phi\in C^\gamma$ and $\psi\in L^\infty$ such that $\psi|_{\gamma^s}$ is constant for every $\gamma^s$. 
Here $\cC(k)$ is a positive, decreasing sequence with $\cC(k)\to 0$ as $k\to\infty$, with rate depending on $\nu_0(R(x)>k)$.
\footnote{In~\cite{AA} the decay of correlation is proven when $\phi$ and $\psi$ are both H\"older continuous. However, since the proof there uses the quotient along stable disks to obtain a non-invertible tower, where the decay of correlations is known for $\psi\in L^\infty$ by~\cite[Theorem 3]{Y3}, one can easily check that the same proof carries over to $L^\infty$ functions $\psi$ that are constant on stable disks.} The rate function $\cC(k)$ is of order $k^{-(p-1)}$ if $\nu_{\gamma^u}(R>k)\le Ck^{-p}$, and is (stretched) exponential if $\nu_{\gamma^u}(R>k)$ is (stretched) exponential.

\section{Equivalence of rare event process and entry times distribution}\label{s.4}

In this section we will establish the relation between rare event process and entry times distributions. Such relation was first discovered by Freitas et al in~\cite{FFT10} for rare event laws and first entry times distributions, in the case $w_n=n$ and $U = B_r(x)$.

Recall that $\{U_n\}$ is a sequence of nested sets whose measures satisfy~\eqref{e.1}, and $\xi^{N}_{u_n}= \sum_{k=0}^{N-1} \II_{\{X_k>u_n\}}$ is the rare event process defined with respect to $\{u_n\}$ and the process $X_j=\vp\circ f^j$. On the other hand, we define the {\em entry times distribution} of a set $U$ as 
\begin{equation}\label{e.4.1}
\zeta^N_U = \sum_{k=0}^{N-1}\II_{U}\circ f^k.
\end{equation}
The next general theorem states that the distribution of $\xi^{N}_{u_n}$ and $\zeta^N_{U_n}$ are the same:

\begin{main}\label{m.3}
For any measure preserving system $(\bM, \cB,\mu,f)$ and any continuous function $\vp:\bM\to\RR\cup\{\pm\infty\}$, let $\{u_n\}$, $\{w_n\}$ be two non-decreasing sequences such that~\eqref{e.1} holds for the process $X_j=\vp\circ f^j$. Then for the nested sets $U_n = \{X_0>u_n\}$, the following statements are equivalent:
\begin{enumerate}
	\item there exists a distribution $m$ such that $\PP(\xi^{w_n}_{u_n} = k)\to m(\{k\})$ as $n\to\infty$ for every $k$;
	\item there exists a distribution $m$ such that $\PP(\zeta^{{\tau}/{\mu(U_n)}}_{U_n} = k)\to m(\{k\})$ as $n\to\infty$ for every $k$.
\end{enumerate}
Here $\tau>0$ is given by~\eqref{e.1}. 
\end{main}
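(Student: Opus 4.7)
The plan is to reduce the equivalence to a direct identification of the two processes followed by a stationarity argument between nearby time horizons, using no probabilistic input beyond $f$-invariance of $\mu$.

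First I would observe that since $X_k=\vp\circ f^k$, we have $\{X_k>u_n\}=f^{-k}\{X_0>u_n\}=f^{-k}U_n$, and therefore
\[
\xi^N_{u_n}(x)=\sum_{k=0}^{N-1}\II_{U_n}(f^k(x))=\zeta^N_{U_n}(x)
\]
as random variables, for every integer $N\ge 0$ and every $n$. So the rare event process for $\vp$ at threshold $u_n$ literally coincides with the entry times process into the sublevel set $U_n$, and the theorem reduces to showing that $\zeta^{w_n}_{U_n}$ and $\zeta^{N_n}_{U_n}$, with $N_n:=\lfloor\tau/\mu(U_n)\rfloor$, share the same limit distribution (or neither has one).

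For this, I would exploit the $f$-invariance of $\mu$. Hypothesis \eqref{e.1} gives $w_n\mu(U_n)\to\tau$, and trivially $N_n\mu(U_n)\to\tau$, so $|w_n-N_n|\mu(U_n)\to 0$. Assuming $w_n\ge N_n$ without loss of generality,
\[
0\le \zeta^{w_n}_{U_n}-\zeta^{N_n}_{U_n}=\sum_{k=N_n}^{w_n-1}\II_{U_n}\circ f^k,
\]
whose expectation is $(w_n-N_n)\mu(U_n)$ by invariance. Markov's inequality then yields $\PP(\zeta^{w_n}_{U_n}\ne\zeta^{N_n}_{U_n})\to 0$, and consequently
\[
|\PP(\zeta^{w_n}_{U_n}=k)-\PP(\zeta^{N_n}_{U_n}=k)|\le \PP(\zeta^{w_n}_{U_n}\ne\zeta^{N_n}_{U_n})\to 0
\]
for every $k\ge 0$. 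Both implications (1)$\Rightarrow$(2) and (2)$\Rightarrow$(1) follow with the same limiting $m$.

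There is essentially no obstacle: the only content is the observation that both time horizons sit inside the Kac window $\tau/\mu(U_n)$ with a discrepancy of $o(1/\mu(U_n))$, which is exactly what \eqref{e.1} encodes. In particular the continuity of $\vp$ enters only to guarantee that $U_n$ is a measurable set, which is why the remark following Theorem~\ref{m.3} (allowing upper semicontinuous $\vp$, with $U_n=\{X_0\ge u_n\}$ or its interior) holds without any modification of this argument.
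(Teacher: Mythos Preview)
Your proof is correct and follows essentially the same route as the paper: first identify $\xi^N_{u_n}=\zeta^N_{U_n}$ pointwise, then use invariance of $\mu$ to bound $\PP(\zeta^{w_n}_{U_n}\ne\zeta^{N_n}_{U_n})$ by $|w_n-N_n|\mu(U_n)\to 0$. The paper phrases the second step via a union bound over $\bigcup_k f^{-k}U_n$ rather than Markov's inequality, and handles the ordering of $w_n$ and $\tau/\mu(U_n)$ with $a_n=\min$, $b_n=\max$ instead of a ``without loss of generality'' (which is the cleaner way, since the ordering can vary with $n$), but these are cosmetic differences.
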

\begin{remark}
Note that in this theorem, we do not assume any type of mixing condition, nor do we need any regularity assumptions on $U_n$ such as Assumption~\ref{a.1}. Also note that the distribution $m$ depends implicitly on $\tau>0$.
\end{remark}
\begin{remark}
Due to the Kac's theorem, the average of the return time on any positive measure set $U$ is given by  $\frac{1}{\mu(U)}$. This coincides with the normalizing factor $\tau/\mu(U_n)$. 
\end{remark}

An an simple corollary, we obtain the equivalence between extremal value laws and first entry times distribution for any continuous observable:
\begin{corollary}\label{c.2}
Under the assumptions of Theorem~\ref{m.3}, the following statements are equivalent:
\begin{enumerate}
	\item the extremal value process $	M_n=\max\{X_k,k=0,\ldots,n-1\}$	satisfies \\$
	\PP(M_{w_n}\le u_n)\to G(\tau)$ for some function $G$;
	\item the first entry time $\tau_{U_n}$ satisfies $\PP\left(\tau_{U_n}>\frac{\tau}{\mu(U_n)}\right)\to G(\tau)$ for some function $G$.
\end{enumerate}
\end{corollary}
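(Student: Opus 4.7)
The plan is to reduce the corollary directly to the equivalence established in Theorem~\ref{m.3}, using the elementary identities that link the extremal process with zero-exceedance events and the first entry time with zero-entry events.

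First I would record the two tautologies. By the definition of $M_n$ as a maximum,
$$
\{M_{w_n}\le u_n\}=\{X_0\le u_n,\ldots,X_{w_n-1}\le u_n\}=\{\xi^{w_n}_{u_n}=0\}.
$$
Likewise, the first entry time exceeds $N$ exactly when there is no entry to $U_n$ during the first $N$ iterates, so
$$
\{\tau_{U_n}>N\}=\{\zeta^{N}_{U_n}=0\}
$$
for any integer $N$. Applied with $N=\tau/\mu(U_n)$ (or rather its integer part; the rounding is absorbed by stationarity as discussed below), these identities translate both sides of the corollary into statements about the $k=0$ point mass of the distributions appearing in Theorem~\ref{m.3}.

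The second step is to apply Theorem~\ref{m.3}. Inspecting its statement one sees that the equivalence is quantitative in each coordinate $k$: the proof matches $\PP(\xi^{w_n}_{u_n}=k)$ with $\PP(\zeta^{\tau/\mu(U_n)}_{U_n}=k)$ up to an error tending to zero as $n\to\infty$, for each fixed $k$. Specializing to $k=0$, we obtain
$$
\PP(M_{w_n}\le u_n)=\PP(\xi^{w_n}_{u_n}=0)=\PP(\zeta^{\tau/\mu(U_n)}_{U_n}=0)+o(1)=\PP\!\left(\tau_{U_n}>\tfrac{\tau}{\mu(U_n)}\right)+o(1),
$$
which immediately yields the desired equivalence of the two limits.

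The only mild subtlety is that the window lengths $w_n$ and $\tau/\mu(U_n)$ do not coincide, merely satisfying $w_n\mu(U_n)\to\tau$. However the number of iterates in the symmetric difference of the two windows has expected occupation at most $|w_n-\tau/\mu(U_n)|\cdot\mu(U_n)=o(1)$, so by Markov's inequality the contribution of this discrepancy to $\PP(\xi^{w_n}_{u_n}=0)\triangle \PP(\zeta^{\tau/\mu(U_n)}_{U_n}=0)$ vanishes. I expect this rounding/window-matching to be the only place where one must actually compute anything; everything else is a bookkeeping consequence of Theorem~\ref{m.3}. The final sentence of the corollary, concerning $\pi_{\ess}(U_n)\to\infty$, then follows by combining this equivalence with Corollary~\ref{c.1}, which gives $\alpha_1=1$ in that setting.
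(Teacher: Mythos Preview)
Your proof is correct and follows essentially the same route as the paper: both identify $\{M_{w_n}\le u_n\}=\{\xi^{w_n}_{u_n}=0\}$ and $\{\tau_{U_n}>\tau/\mu(U_n)\}=\{\zeta^{\tau/\mu(U_n)}_{U_n}=0\}$, then invoke Theorem~\ref{m.3} at $k=0$. Two small remarks: the window-matching discussion you include is already the content of the proof of Theorem~\ref{m.3}, so it is redundant here; and there is no ``final sentence about $\pi_{\ess}(U_n)\to\infty$'' in Corollary~\ref{c.2}---you may be thinking of a different corollary.
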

\begin{proof}[Proof of Theorem~\ref{m.3}]
From the definition of $X_j$ and $U_n$, we see that 
\begin{align*}
\{X_k>u_n\} = &\{\vp\circ f^k> u_n\}\\
=&f^{-k}\{\vp>u_n\}\\
=&f^{-k} U_n,
\end{align*}
which means
\begin{align*}
\xi^{N}_{u_n} = &\sum_{k=0}^{N-1} \II_{\{X_k>u_n\}} = \sum_{k=0}^{N-1} \II_{f^{-k} U_n} \\
= &\sum_{k=0}^{N-1} \II_{ U_n}\circ f^k \\ = &\,\zeta^N_{U_n}.\\
\end{align*}

To prove the theorem, it suffices to show that 
$$
\left|\PP(\xi^{w_n}_{u_n} = k)-\PP(\zeta^{{\tau}/{\mu(U_n)}}_{U_n} = k)\right|\to 0
$$ 
for each $k$, since then the convergence of either one of them to $m(\{k\})$ will imply the convergence of the other to the same limit.
To this end, we write 
$$
a_n = \min\{w_n,\tau/\mu(U_n)\} \mbox{ and }b_n = \max\{w_n,\tau/\mu(U_n)\},
$$
then we have
$$
\{\xi^{w_n}_{u_n} = k\}\dotminus\{\zeta^{\tau/\mu(U_n)}_{U_n}=k \}\subset \left\{\sum_{k=a_n}^{b_n-1}\II_{ U_n}\circ f^k \ge 1\right\},
$$
where $\dotminus$ is the symmetric difference.

It then follows that
\begin{align*}
&\left|\PP(\xi_{u_n}^{w_n} = k)-\PP(\zeta_{U_n}^{{\tau}/{\mu(U_n)}} = k)\right|\\
\le& \PP\left(\sum_{k=a_n}^{b_n-1}\II_{ U_n}\circ f^k \ge 1\right)\\
= &\mu\left(\bigcup_{k=a_n}^{b_n-1}f^{-k}U_n\right)\\
\le& (b_n-a_n)\mu(U_n)\\
=&\left|w_n-\frac{\tau}{\mu(U_n)}\right|\mu(U_n)\\
=&\left|w_n\mu(U_n)-\tau\right|\to 0,
\end{align*}
thanks to~\eqref{e.1}. This finishes the proof of Theorem~\ref{m.3}.
\end{proof}
\begin{proof}[Proof of Corollary~\ref{c.2}]
Note that 
\begin{align*}
\{M_{w_n}\le u_n\} =&\{X_j\le u_n \mbox{ for all } j=0,1,\ldots, w_n-1\}\\
= &\{\xi_{u_n}^{w_n}=0\}..
\end{align*}

On the other hand, 
$$
\left\{\tau_{U_n}>\frac{\tau}{\mu(U_n)}\right\} = \{\zeta_{U_n}^{\tau/\mu(U_n)}=0\}.
$$
So the corollary follows from Theorem~\ref{m.3} by taking $k=0$, and considering $m(\{0\})$ as a function of $\tau$.

\end{proof}
\begin{remark}
Corollary~\ref{c.2} is first obtained in~\cite{FFT10} for functions $\vp$ where the maximal value is achieved at a single point $x$, for the case $w_n=n$. Moreover, it is assumed that the function has certain regularity near $x$. It turns out that such regularity assumption will make the extremal value distribution to be either Gumbel, Fr\'echet, or Weibull distribution. See the book~\cite{FFFH} for more discussion on this topic.
\end{remark}

\section{Proof of Theorem~\ref{m.2} when $U_n\subset \Omega_0$}\label{s.6}
This Section contains the proof of Theorem~\ref{m.2}, under the additional assumption that $U_n\subset \Omega_0$ for $n$ large enough and that the tower is defined using the first return map. The general case will be dealt with in the next section.

In view of Theorem~\ref{m.3}, we only need to show that under the assumptions of Theorem~\ref{m.2}, the distribution of the entry times $\zeta_{U_n}$ converges to the compound Poisson distribution with parameters $\{\alpha_1\tau\lambda_\ell\}$. The proof is carried out in four steps: 
\begin{enumerate}
	\item first we show that for {\em $\phi$-mixing measures}, the entry times distribution for a union of cylinders can be approximated by a compound binomial distribution;
	\item furthermore, assume that the systems is {\em Gibbs-Markov}, we will show the convergence to the compound Poisson distribution, for a sequence of nested cylinder sets; this step yields a theorem that is interesting in itself (Theorem~\ref{m.4});
	\item then we will approximate the sets $U_n$ from inside by unions of cylinders, and prove that the entry times distribution will converge to the same limit;
	\item finally, we verify that the return maps for the Young's towers, $T=f^R$, satisfy the assumptions above; then an inducing argument will carry the convergence to the original map $f$.
\end{enumerate} 
One thing to keep in mind is that, in this section, we will not use Assumption~\ref{a.3} or~\ref{a.4}. In the meantime, Assumption~\ref{a.1} and~\ref{a.2} are only used when one considers open sets $U_n$ (Theorem~\ref{t.4}).
\subsection{Compound binomial distribution of cylinder sets for $\phi$-mixing measures}
In this subsection, we let $T$ be a map on a probability space $\Omega$ and $\mu$ be a $T$-invariant probability measure 
on $\Omega$. We assume that there is a measurable partition (finite or countably infinite) $\mathcal{A}$ of 
$\Omega$ and denote by $\mathcal{A}^n=\bigvee_{j=0}^{n-1}T^{-j}\mathcal{A}$  its $n$th join. $\mathcal{A}^n$ is a partition of $\Omega$ and its elements are
called $n$-cylinders. For a point $x\in\Omega$ we denote by $A_n(x)\in\mathcal{A}^n$ 
the unique $n$-cylinder that contains the point $x$. 
We assume that $\mathcal{A}$ is generating, that is $\bigcap_nA_n(x)$ consists of the 
singleton $\{x\}$.

\begin{definition}  The measure $\mu$ is {\em left $\phi$-mixing} with respect to  $\mathcal{A}$
	if
	$$
	|\mu(A \cap T^{-n-k} B) - \mu(A)\mu(B)| \le \phi(k)\mu(A)
	$$
	for all $A \in \sigma(\mathcal{A}^n)$, $n\in\mathbb{N}$ and  $B \in \sigma(\cup_j \mathcal{A}^j )$,
	where $\phi(k)$ is a decreasing function which converges to zero as $k\to\infty$. Here $\sigma(\mathcal{A}^n)$ is the $\sigma$-algebra generated by $n$-cylinders.\\
\end{definition}

For simplicity we will drop the superscript in $\zeta$ (as it is always coupled with the measure of $U$) and write 
$$
\zeta_U = \zeta^{\tau/\mu(U)}_{U} =  \sum_{k=0}^{\tau/\mu(U)-1}\II_{U}\circ f^k.
$$
We will also write $S\lesssim B$ if there is a universal constant $C$ such that $S\le C\cdot B$.  Recall that $\lambda(K,U)$ is defined by~\eqref{e.2} and $\alpha_1(K,U) = \mu_{U}(\tau_{U}>K).$

The next theorem is the compound binomial approximation for a union of $n$-cylinders. A similar result is obtained in~\cite{GHV} using the Chen-Stein method. Here we will prove it using the compound binomial distribution theorem in Section~\ref{s.3}.

To simplify notation, we let $K$ be an integer and put $Z_j = \sum_{i=j(2K+1)}^{(j+1)(2K+1)-1}X_i$ as the $j$th block, with $X_i=\II_{U}\circ T^{i}$  as before. We will also write
$$
\phi^1(k)=\sum_{j=k}^{\infty}\phi(j)
$$
for the tail sum of $\phi$.

\begin{theorem}\label{t.3}
Let $\mu$ be a $T$-invariant probability measure that is left $\phi$-mixing with respect to an at most countable, generating partition $\cA$. Assume that $\phi(k)$ is summable in $k$. Let $U\in\sigma(\cA^n)$ be a union of $n$-cylinders with positive measure.

Then there exists a constant $C>0$, such that for all integers $K,\Delta$ with  $\Delta(2K+1) < \tau/\mu(U)$ and every $k\in\NN_0$, one has
\begin{align*}\numberthis\label{e.5.1}
\left|\PP(\zeta_U = k)- m(\{k\})\right|\le& C\phi(\Delta/2)+(2K+2)\Delta\mu(U)+\phi^1(K)\\&+\frac{\tau}{(2K+1)\mu(U)}\sum_{j=1}^{j_0}\PP\left(Z_0\ge 1 \,\wedge\,Z_j\ge 1\right),
\end{align*}
where $m$ is compound binomial with parameters $(\tau/((2K+1)\mu(U)),\PP(\tau_U\le 2K))$ on the binomial part,   $\{\lambda_\ell(K,U)\}$ on the compound part, and $j_0=[n/(2K+1)]+2.$
\end{theorem}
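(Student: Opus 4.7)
The plan is to apply Theorem~\ref{t.2} with $N=\tau/\mu(U)$ and block length $2K+1$, so that $N'=\tau/((2K+1)\mu(U))$ and the resulting compound binomial measure carries exactly the parameters claimed in the statement. The $\Delta\PP(X_0=1)=\Delta\mu(U)$ term from Theorem~\ref{t.2} is immediately absorbed into $(2K+2)\Delta\mu(U)$, so the whole task reduces to bounding $N'\cR_1$ and $N'\cR_2$ by the other three summands in~\eqref{e.5.1}, using the left $\phi$-mixing hypothesis.

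For $\cR_1$, I would use that $U\in\sigma(\cA^n)$ implies $\{Z=u\}\in\sigma(\cA^{n+2K})$ for every $u$, while the tail event $\{W^{M(2K+1)}_{\Delta(2K+1)}=q-u\}$ lies in the future shifted by $\Delta(2K+1)$. Applying left $\phi$-mixing term by term in $u$ and then summing $\sum_{u\ge 1}\PP(Z=u)\le\PP(Z\ge 1)\le(2K+1)\mu(U)$ gives $\cR_1\lesssim\phi\bigl(\Delta(2K+1)-n-2K\bigr)(2K+1)\mu(U)$. Once $\Delta$ and $K$ are chosen so that this gap exceeds $\Delta/2$, multiplying by $N'$ yields $N'\cR_1\lesssim\tau\phi(\Delta/2)\lesssim\phi(\Delta/2)$.

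The control of $\cR_2=\sum_{j=2}^{\Delta}\PP(Z_0\ge 1\wedge Z_j\ge 1)$ is more delicate, and is the real reason for the threshold $j_0=[n/(2K+1)]+2$. For $j<j_0$ the blocks $Z_0$ and $Z_j=Z\circ T^{(2K+1)j}$ may still overlap on the coordinates that determine cylinder membership, so $\phi$-mixing gives no useful gain; I would retain these terms and let them become, after multiplication by $N'$, the last explicit summand in~\eqref{e.5.1}. For $j\ge j_0$ the separation $(2K+1)j-n-2K$ is at least $2K+2$ and grows by at least $2K+1$ per additional block, so left $\phi$-mixing yields
\[
\PP(Z_0\ge 1\wedge Z_j\ge 1)\le \PP(Z_0\ge 1)^2+\phi(\mathrm{gap}_j)\,\PP(Z_0\ge 1).
\]
Summing and multiplying by $N'$, the quadratic part contributes $\lesssim\tau(2K+1)\Delta\mu(U)$, matching $(2K+2)\Delta\mu(U)$, while monotonicity of $\phi$ bounds $\sum_{j\ge j_0}\phi(\mathrm{gap}_j)\le\phi^1(2K+2)\le\phi^1(K)$, producing the final $\phi^1(K)$ contribution after a further factor $\tau=N'\PP(Z_0\ge 1)/(1+o(1))$.

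The main obstacle is not any individual estimate but the bookkeeping around $j_0$ and the $\sigma$-algebra of $\{Z=u\}$: $j_0$ must be small enough that the retained explicit sum is useful, yet large enough that the $\phi$-mixing gap is both positive and has summable tail; and the index $n+2K$ has to be correctly identified so that the left-$\phi$-mixing inequality applies to $\{Z=u\}$ rather than only to cylinders of length $n$. Once these choices are in place, the remainder of the argument is the triangle inequality together with monotonicity (and summability) of $\phi$ and the $T$-invariance of $\mu$.
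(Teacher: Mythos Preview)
Your proposal is correct and follows essentially the same approach as the paper: apply Theorem~\ref{t.2} with $N=\tau/\mu(U)$, bound $\cR_1$ by one application of left $\phi$-mixing using $\{Z_0=u\}\in\sigma(\cA^{n+2K+1})$, and split $\cR_2$ at $j_0$, retaining the short-range terms verbatim and controlling $j>j_0$ by $\phi$-mixing plus the trivial bound $\PP(Z_0\ge1)\le(2K+1)\mu(U)$.

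One small arithmetic slip (which the paper shares, in a different form): at $j=j_0=[n/(2K+1)]+2$ the gap $(2K+1)j-(n+2K)$ is only guaranteed to be $\ge 2$, not $\ge 2K+2$; it is for $j\ge j_0+1$ that the gap is $\ge 2K+2$. Since the explicit sum in~\eqref{e.5.1} runs up to $j_0$ (so $j=j_0$ is retained, not estimated), your tail bound $\sum_{j>j_0}\phi(\mathrm{gap}_j)\le\phi^1(2K+2)\le\phi^1(K)$ is in fact correct once you shift the split by one. Also, both your argument and the paper's tacitly require $\Delta$ large relative to $n+2K$ to get the $\phi(\Delta/2)$ form of the $\cR_1$ bound; this is harmless for the intended application (Remark~\ref{r.5.2}) but is not literally covered by the ``for all $K,\Delta$'' in the statement.
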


\begin{proof}

We employ the compound binomial approximation theorem in Section~\ref{s.3} on $N=[\tau/\mu(U)]$. Put $V_a^b=\sum_{j=a}^b Z_j$. Then for any $2\le\Delta\le N'=N/(2K+1)$ (where we assume $N'$ is an integer for simplicity), we have
$$
\left|\PP(V_0^{N'}=k)- m(\{k\})\right|\le CN'(\cR_1+\cR_2)+\Delta\mu(U),
$$
where
\begin{flalign*}
\cR_1=\sup\limits_{\substack{M\in[\Delta,N']\\q\in(0,N'-\Delta)}}\Bigg|\sum_{u=1}^{q-1}\big(\PP(Z_0=u\,\wedge\,&V_\Delta^M=q-u)-\PP(Z_0=u)\PP\left(V_\Delta^M=q-u\right)\big)\Bigg|,&
\end{flalign*}
and
\begin{flalign*}
\cR_2=\sum_{j=1}^{\Delta}\PP&\left(Z_0\ge 1 \,\wedge\,Z_j\ge 1\right).&
\end{flalign*}
Here $m$ is the compound binomial distribution with parameter $N'=\tau/((2K+1)\mu(U))$, $p=\PP(Z_j\ge 1) = \PP(\tau_U\le 2K)$ in the binomial part, and $(1/p)\PP(Z_j=\ell)=\lambda_\ell(K,U)$ in the compound part.

Next we will estimate the error terms $\cR_1$ and $\cR_2$ using the left $\phi$-mixing property. We will also use the following trivial estimate:
$$
\PP(Z_0\ge 1) =\mu(\bigcup_{i=0}^{2K}T^{-i}U)\le (2K+1)\mu(U).
$$

\noindent 1. Estimate $\cR_1$.

Note that $\{Z_0=u\}\in\sigma(\cA^{n+2K+1})$, and $\{V_\Delta^M=q-u\}\in T^{-\Delta}\sigma(\bigcup_j\cA^j)$. Therefore, if $\Delta\ge 2(n+2K+1)$ then we get from the mixing property,
\begin{align*}
\cR_1\le&\, \phi(\Delta/2)\sum\limits_{u=1}^{q-1}\PP(Z_0=u)\\
\le&\,\phi(\Delta/2)\PP(\tau_U\le 2K+1)\\
\le&\,\phi(\Delta/2)(2K+1)\mu(U).
\end{align*}

\noindent 2. Estimate $\cR_2$.
To estimate $\cR_2$, we first note that since $\{Z_0\ge 1\}\in\sigma(\cA^{n+2K+1})$, and $n$ will be sent to infinity while $K$ is fixed~\footnote{This fact is not used in this theorem, but is essential for the convergence to the compound Poisson distribution in our setup, as the convergence of parameters $\lambda_\ell(K,U)$ requires two separate limits.}, one cannot use the mixing assumption on $\PP\left(Z_0\ge 1 \,\wedge\,Z_j\ge 1\right)$ for small values of $j$. 

To solve this issue, we write
$$
j_0=[n/(2K+1)]+2.
$$
When  $j\ge j_0$, we have a gap between $\{Z_0\ge 1\}\in\sigma(\cA^{n+2K+1})$ and $\{Z_j \ge 1\}\in T^{-j(2K+1)}\sigma(\bigcup_j\cA^j)$ with size at least $K$. The mixing property then yields:
\begin{align*}
\PP\left(Z_0\ge 1 \,\wedge\,Z_j\ge 1\right)\le &\PP(Z_0\ge 1)(\PP(Z_j\ge 1)+\phi((j-1)(2K+1)-n))\\
=&\PP(Z_0\ge 1)^2+ \PP(Z_0\ge 1)\phi((j-1)(2K+1)-n),
\end{align*}
where the second line follows from stationarity. Sum over $j>j_0$ and recall that $\phi(k)\lesssim k^{-p}$ for some $p>1$, we obtain
\begin{align*}
&\sum_{j=j_0}^{\Delta}\PP\left(Z_0\ge 1 \,\wedge\,Z_j\ge 1\right)\\
\le &\sum_{j=j_0}^{\Delta}\PP(Z_0\ge 1)^2+ \PP(Z_0\ge 1)\phi((j-1)(2K+1)-n)\\
\le &\Delta(2K+1)^2\mu(U)^2+(2K+1)\mu(U)\sum_{j\ge j_0}\phi((j-1)(2K+1)-n)\\
\lesssim &\Delta(2K+1)^2\mu(U)^2+(2K+1)\mu(U)\phi^1(K).
\end{align*}

\noindent 3. Collect the error terms.

Now we collect the estimates above and obtain (recall that $N'=N/(2K+1) = \tau/((2K+1)\mu(U))$):
\begin{align*}
&\left|\mu(\zeta_n = k)- m(\{k\})\right|\\
\lesssim& N'\big(\phi(\Delta/2)(2K+1)\mu(U)+\Delta(2K+1)^2\mu(U)^2\\&+\sum_{j=1}^{j_0}\PP\left(Z_0\ge 1 \,\wedge\,Z_j\ge 1\right)+(2K+1)\mu(U)\phi^1(K)\big) +\Delta\mu(U)\\
\lesssim& \phi(\Delta/2)+(2K+2)\Delta\mu(U)+\phi^1(K)+\frac{\tau}{(2K+1)\mu(U)}\sum_{j=1}^{j_0}\PP\left(Z_0\ge 1 \,\wedge\,Z_j\ge 1\right).
\end{align*}

\end{proof}
\begin{remark}\label{r.5.2}
The first two terms on the right-hand-side of~\eqref{e.5.1} will converge to zero if one considers a sequence of nest sets $U_n$ with $\mu(U_n)\to 0$ and let $\Delta = \mu(U_n)^{-1/2}$. The third term can be dealt with by sending $K$ to infinity on a second limit (recall that $\phi$ is assumed to be summable). Doing so will also make the compound binomial distribution $m$ converge to a compound Poisson distribution with the desired parameters $\{\tau\alpha_1\lambda_\ell\}$, as we have seen in Remark~\ref{r.2}, Section~\ref{ss.3.2}. However, controlling the last term will require more information on other structures of the system. This is carried out in the next subsection.
\end{remark}
\subsection{Gibbs Markov systems}

Recall that a map $T:\Omega\to\Omega$ is called {\em Markov} if there is a countable measurable partition $\cA$ on $\Omega$ with $\mu(A)>0$ for all $A\in \cA$, such that for all $A\in \cA$, $T(A)$ is injective and can be written as a union of elements in $\cA$. Write $\cA^n=\bigvee_{j=0}^{n-1}T^{-j}\cA$ as before, it is also assumed that $\cA$ is (one-sided) generating.

Fix any $\lambda\in(0,1)$ and define the metric $d_\lambda$ on $\Omega$ by $d_\lambda(x,y) = \lambda^{s(x,y)}$, where $s(x,y)$ is the largest positive integer $n$ such that $x,y$ lie in the same $n$-cylinder. Define the Jacobian $g=JT^{-1}=\frac{d\mu}{d\mu\circ T}$ and $g_k = g\cdot g\circ T \cdots g\circ T^{k-1}$.

The map $T$ is called {\em Gibbs-Markov} if it preserves the measure $\mu$, and also satisfies the following two assumptions:\\
(i) The big image property: there exists $C>0$ such that $\mu(T(A))>C$ for all $A\in \cA$.\\
(ii) Distortion: $\log g|_A$ is Lipschitz for all $A\in\cA$.

 For example, if a differentiable map $f$ is modeled by Young's towers with a base $\Omega_0$, then the return map $T = f^R:\Omega_0\to\Omega_0$ is a Gibbs-Markov map with respect to the invariant measure $\mu|_{\Omega_0}=(h\nu)|_{\Omega_0}$ and the partition $\{\Omega_{0,i}\}$, since $T(\Omega_{0,i}) =\Omega_0$.

 In view of (i) and (ii), there exists a constant $D>1$ such that for all $x,y$ in the same $n$-cylinder, we have the following distortion bound:
$$
\left|\frac{g_n(x)}{g_n(y)}-1\right|\le D d_\lambda(T^nx,T^ny),
$$
and the Gibbs property:
$$
D^{-1}\le \frac{\mu(A_n(x))}{g_n(x)}\le D.
$$
It is well known (see, for example, Lemma 2.4(b) in~\cite{MN05}) that Gibbs-Markov systems are exponentially $\phi$-mixing, that is, $\phi(k)\lesssim \eta^k$ for some $\eta\in(0,1)$.

Before stating the next theorem, we will make some assumption on the sizes of the nested sequence $\{U_n\}$. We assume that each $U_n$ is a union of $\kappa_n$-cylinders, for some integers $\kappa_n\to\infty$ as $n\to\infty$. For each $n$ and $j\ge 1$, we define $\cC_j(U_n) =\{A\in\cA^j,C\cap U_n\ne\emptyset\}$ the collection of all $j$-cylinders that have non-empty intersection with $U_n$.
Then we write 
$$
U_n^j = \bigcup_{A\in\cC_j(U_n)} A
$$
for the approximation of $U_n$ by $j$-cylinders from outside. For each fixed $j$, $\{U_n^j\}_n$ is also nested, that is, $U_{n+1}^j\subset U_n^j$. Obviously we have $U_n\subset U_n^j$ for all $j$, and $U_n=U_n^j$ if $j\ge \kappa_n$. Also note that the diameter of $j$-cylinders are exponentially small in $j$. Together with the distortion property (ii), we see that the measure of $j$-cylinders are also exponentially small in $j$.

The next theorem shows the convergence to the compound Poisson distribution for a nest sequence of cylinder sets $U_n$, which is interesting in its own right:

\begin{main}\label{m.4}
Let $T$ be a Gibbs-Markov systems and $U_n\in\sigma(\cA^{\kappa_n})$ a sequence of nested sets with $\kappa_n\mu(U_n)\to 0$. Assume that $\{\hat{\alpha}_{\ell}\}$ defined in~\eqref{e.3} exists, and satisfies $\sum_{\ell}\ell\hat\alpha_{\ell}<\infty$. We also assume that there are constants $C>0$ and $p'>1$ such that $\mu(U_n^j)\le \mu(U_n) + Cj^{-p'}$ for every $j\le \kappa_n$ .

Then the entry times distribution $\zeta_{U_n}= \sum_{k=0}^{\tau/\mu(U_n)-1}\II_{U_n}\circ f^k$ satisfy
$$
\PP(\zeta_{U_n} = k)\to m(\{k\})
$$
as $n\to\infty$ for every $k\in\NN_0$, where $m$ is the compound Poisson distribution with parameters $\{\tau\alpha_{1}\lambda_\ell\}$ with $\lambda_\ell$, $\alpha_1$ defined by~\eqref{e.3'} and~\eqref{e.4} respectively.
\end{main}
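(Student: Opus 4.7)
The strategy is to apply the compound binomial approximation of Theorem~\ref{t.3} with $U=U_n$, choose an auxiliary parameter $\Delta=\Delta(n)$ with $\Delta(n)\to\infty$ and $\Delta(n)\mu(U_n)\to 0$ (for instance $\Delta(n)=[\mu(U_n)^{-1/2}]$), and pass to the iterated limit $\lim_{K\to\infty}\lim_{n\to\infty}$. Three of the four error terms on the right-hand side of~\eqref{e.5.1} are routine under this schedule: the exponential $\phi$-mixing of Gibbs-Markov systems gives $\phi(\Delta/2)\to 0$; the choice of $\Delta$ handles $(2K+2)\Delta\mu(U_n)\to 0$; and summability of $\phi$ gives $\phi^1(K)\to 0$ as $K\to\infty$. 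On the distributional side, the binomial parameters $(\tau/((2K+1)\mu(U_n)),\PP(\tau_{U_n}\le 2K))$ of the approximating measure converge to a Poisson with intensity $\tau\alpha_1$ by Lemma~\ref{l.3} and Remark~\ref{r.2}, while the compound parameters $\lambda_\ell(K,U_n)$ converge to $\lambda_\ell$ by Theorem~\ref{t.1} combined with the existence of $\{\hat\alpha_\ell\}$ and the summability $\sum\ell\hat\alpha_\ell<\infty$.

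The heart of the argument is the fourth error term
$$
E_n(K)=\frac{\tau}{(2K+1)\mu(U_n)}\sum_{j=1}^{j_0}\PP(Z_0\ge 1\wedge Z_j\ge 1),\qquad j_0=[\kappa_n/(2K+1)]+2.
$$
The plan is to split the index $j$ at a threshold $j_*$ and treat the two pieces with complementary tools. For the ``far'' range $j\ge j_*$, approximate $U_n\subset U_n^{m_j}\in\sigma(\cA^{m_j})$, choosing $m_j$ so that the event $\{Z_0^{(m_j)}\ge 1\}\in\sigma(\cA^{m_j+2K})$ is separated from $\{Z_j\ge 1\}$ by a $\phi$-mixing gap $g_j$. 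The mixing inequality
$$
\PP(Z_0^{(m_j)}\ge 1\wedge Z_j\ge 1)\le\PP(Z_0^{(m_j)}\ge 1)\PP(Z_j\ge 1)+\phi(g_j)\PP(Z_0^{(m_j)}\ge 1),
$$
combined with the hypothesis $\mu(U_n^{m_j})\le\mu(U_n)+Cm_j^{-p'}$ (whose tail is summable since $p'>1$) and the exponential decay of $\phi$, will show that the contribution of the far range to $E_n(K)$ vanishes in the double limit. For the ``near'' range $j<j_*$, the plan is to reduce to a sum of the form $(2K+1)\sum_k\mu(U_n\cap f^{-k}U_n)$ over a short window and invoke Lemma~\ref{l.2}, which (thanks to $\sum\ell\hat\alpha_\ell<\infty$) makes the conditional mass arbitrarily small once $K$ is large.

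The main obstacle is the interplay between the cylinder-approximation scale $m_j$ (where the gain is polynomial) and the $\phi$-mixing scale (where the gain is exponential): naive choices force either the cross terms $\phi(g_j)m_j^{-p'}/\mu(U_n)$ or the near-range count $j_*$ to blow up as $n\to\infty$ for fixed $K$. Balancing these will require coupling $m_j$ to a negative power of $\mu(U_n)$ with exponent above $1/p'$, and organizing the near-range bound using the short-return decay from Lemma~\ref{l.2}. Once this balance is in place, all four pieces of the error collectively vanish under $\lim_{K\to\infty}\lim_{n\to\infty}$, yielding $\PP(\zeta_{U_n}=k)\to m(\{k\})$ for every $k\in\NN_0$.
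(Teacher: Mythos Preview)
Your overall framework---applying Theorem~\ref{t.3}, choosing $\Delta\sim\mu(U_n)^{-1/2}$, and passing to the iterated limit $\lim_K\lim_n$ with Remark~\ref{r.2} and Theorem~\ref{t.1} handling convergence of the parameters---matches the paper exactly. The issue is your treatment of $E_n(K)$.

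You correctly diagnose the obstacle, but your proposed fix does not close. To kill the cross term $\phi(g_j)m_j^{-p'}/\mu(U_n)$ under $\lim_K\lim_n$ you must take $m_j\gtrsim\mu(U_n)^{-\beta}$ with $\beta>1/p'$, which forces the near--far threshold to satisfy $j_*\gtrsim\mu(U_n)^{-\beta}/(2K+1)\to\infty$ as $n\to\infty$ with $K$ fixed. Lemma~\ref{l.2} then cannot handle the near range: it only asserts $\PP_{U_n}(W_n^{K'-K}\circ f^K>0)\le\eta$ for \emph{fixed} $K'>K\ge K_0$ and $n$ large depending on $K'$; it gives no decay in $j$ and does not apply to windows whose length $j_*(2K+1)$ tends to infinity with $n$. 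Reducing the near-range contribution to $\mu(U_n)^{-1}\sum_{m\le j_*(2K+1)}\mu(U_n\cap T^{-m}U_n)=\EE_{U_n}[W_n^{j_*(2K+1)}]$ does not help either, since $\sum_\ell\ell\hat\alpha_\ell<\infty$ controls $\lim_n\EE_{U_n}[W_n^M]$ only for fixed $M$, not for $M=M_n\to\infty$. Thus your balance kills the far range but leaves the near range uncontrolled.

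The paper does not use $\phi$-mixing at all for $E_n(K)$. Instead it exploits the Gibbs--Markov big-image and bounded-distortion properties to obtain the pointwise estimate
\[
\mu(U_n\cap T^{-j}U_n)\;\lesssim\;\mu(U_n)\,\mu(U_n^j)\qquad\text{for every }j\ge1,
\]
by writing $U_n\cap T^{-j}U_n\subset\bigcup_{A\in\cC_j(U_n)}(A\cap T^{-j}U_n)$ and pushing each piece forward by $T^j$. The crucial extra factor $\mu(U_n)$ comes from distortion, not from mixing. Combined with the hypothesis $\mu(U_n^j)\le\mu(U_n)+Cj^{-p'}$ this gives $\mu(U_n\cap T^{-j}U_n)\lesssim\mu(U_n)(\mu(U_n)+j^{-p'})$, a bound summable in $j$ with tail $\lesssim K^{1-p'}$ \emph{independent of $n$}. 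No near/far split and no appeal to Lemma~\ref{l.2} is needed (Lemma~\ref{l.2} plays no role in this proof); the only residual case is $j=1$, handled by a secondary cut at $K'=\sqrt K$. This distortion estimate is the missing ingredient in your scheme.
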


\begin{proof}
In view of Theorem~\ref{t.3}, Remark~\ref{r.2} and~\ref{r.5.2}, we only need to show that the last term in~\eqref{e.5.1}:
$$
\frac{\tau}{(2K+1)\mu(U_n)}\sum_{j=1}^{j_0}\PP\left(Z_0\ge 1 \,\wedge\,Z_j\ge 1\right)
$$
converges to zero under the limit in $n$ followed by a limit in $K$. Here $j_0 =\kappa_n/(2K+1)]+2$ and $Z_j =\sum_{i=j(2K+1)}^{(j+1)(2K+1)-1}X_i$.

We start with some observations on the Gibbs-Markov systems. First, By the big image property, for any $j$-cylinder $A\in\cA^j$, we have
\begin{equation}\label{e.5.2}
\mu(T^jA)\ge C,
\end{equation}
where $C$ is the constant from (i).

Secondly, for any $j$-cylinder $A$ and any set $U\in \Omega$, the distortion property (ii) and the Gibbs property gives
\begin{equation}\label{e.5.3}
\frac{\mu(U\cap A)}{\mu(A)}\le D\frac{\mu(T^j(U\cap A))}{\mu(T^jA)}.
\end{equation}

Now we are ready to estimate  $\mu(U_n\cap T^{-j}U_n)$:
\begin{align*}
\mu(U_n\cap T^{-j}U_n)\le& \sum_{A\in\cC_j(U_n)}\mu(T^{-j}U_n\cap A)\\
=&\sum_{A\in\cC_j(U_n)}\frac{\mu(T^{-j}U_n\cap A)}{\mu(A)}\mu(A)\\
\lesssim&\sum_{A\in\cC_j(U_n)}\frac{\mu(T^j(T^{-j}U_n\cap A))}{\mu(T^jA)}\mu(A)\\
\lesssim&\sum_{A\in\cC_j(U_n)}\mu(U_n)\mu(A)\\
=&\mu(U_n)\mu\left(\bigcup_{A\in\cC_j(U_n)}A\right)=\mu(U_n)\mu(U_n^j),
\end{align*}
where we use~\eqref{e.5.3} and~\eqref{e.5.2} on the third and forth line, respectively.

Then for $j\ge 2$, 
\begin{align*}
\PP(Z_0\ge 1, Z_j\ge 1) \le& \sum_{0\le k,\ell<2K+1}\mu(T^{-k}U_n\cap T^{-\ell-j(2K+1)}U_n)\\
=&\sum\limits_{u=(j-1)(2K+1)}^{(j+1)(2K+1)}((2K+1)-|u-j(2K+1)|)\mu(U_n\cap T^{-u}U_n)\\
\le&(2K+1)\sum\limits_{u=(j-1)(2K+1)}^{(j+1)(2K+1)}\mu(U_n\cap T^{-u}U_n).
\end{align*}
Summing over $j$ from  $2$ to $j_0$, we get
\begin{align*}
\sum_{j= 2}^{j_0}\PP(Z_0\ge 1, Z_j\ge 1)\le &2(2K+1)\sum\limits_{u=(2K+1)}^{(j_0+1)(2K+1)}\mu(U_n\cap T^{-u}U_n)\\
\lesssim& (2K+1)\mu(U_n)\sum_{u=2K+1}^{\kappa_n+2(2K+1)} \mu(U_n^u)\\
\lesssim& (2K+1)\mu(U_n)\left((\kappa_n+4(2K+1))\mu(U_n)+\sum_{u= 2K+1}^{\kappa_n} u^{-p'}\right),
\end{align*}
where the last line follows from the assumption that $\mu(U_n^j)\le \mu(U_n) + Cj^{-p'}$ and the observation that $U_n^j=U_n$ for $j\ge \kappa_n$. Divide by $(2K+1)\mu(U_n)$, we see that 
\begin{align*}
&\frac{\tau}{(2K+1)\mu(U_n)}\sum_{j=2}^{j_0}\PP\left(Z_0\ge 1 \,\wedge\,Z_j\ge 1\right)\\
\lesssim&(\kappa_n+4(2K+1))\mu(U_n)+\sum_{u\ge 2K+1} u^{-p'}\\\lesssim&(\kappa_n+4(2K+1))\mu(U_n)+ K^{-(p'-1)}.
\end{align*}
The first term goes to zero with $n\to\infty$, and the second term vanishes with $K\to\infty$ (recall that $p'>1$).

We are only left with $\PP(Z_0\ge1, Z_1\ge 1)$. We take any $K'<K$ and split the sum in $Z_0$ as:
$$
Z_0'=\sum_{i=2K+1-K'}^{2K+1}X_i,\mbox{ and }\, Z_0''=Z_0-Z_0'.
$$
Then
\begin{align*}
\PP(Z_0\ge 1,Z_1\ge 1)\le& \PP(Z_0''\ge 1,Z_1\ge 1)+\PP(Z_0'\ge 1)\\
\le& \PP(Z_0''\ge 1,Z_1\ge 1)+K'\mu(U_n).
\end{align*}
For the first term on the right-hand-side, we follow the previous estimate to obtain:
\begin{align*}
\PP(Z_0''\ge 1, Z_1\ge 1) \le& \sum_{\substack{0\le k\le 2K+1-K'\\0\le\ell<2K+1}}\mu(T^{-k}U_n\cap T^{-\ell-(2K+1)}U_n)\\
\le&(2K+1)\sum\limits_{u=K'}^{2(2K+1)}\mu(U_n\cap T^{-u}U_n)\\
\le&(2K+1)\mu(U_n)\sum_{u=K'}^{2(2K+1)}\mu(U_n^u)\\
\lesssim&(2K+1)\mu(U_n)\left(2(2K+1)\mu(U_n) + \sum_{u=K'}^{2(2K+1)}u^{-p'}\right).
\end{align*}
Divide by $(2K+1)\mu(U_n)$, we obtain that for any $K'<K$,
\begin{align*}
\frac{\tau}{(2K+1)\mu(U_n)}\PP(Z_0\ge 1,Z_1\ge 1)\lesssim K\mu(U_n)+(K')^{-(p'-1)}+\frac{K'}{K}.
\end{align*}
If we choose $K'=\sqrt{K}$ then all three terms converge to zero under the limit $n\to\infty$ then limit in $K\to\infty$. This finishes the proof of Theorem~\ref{m.4}.
\end{proof}

\begin{remark}
The assumption that $\kappa_n\mu(U_n)\to 0$ is very mild, as the measure of $\kappa_n$ cylinders are of the order $\lambda ^{-\kappa_n}$, so one allows the number of $\kappa_n$-cylinders in $U_n$ to be exponentially large in $\kappa_n.$

Same can be said about the assumption $\mu(U_n^j)\le \mu(U_n) + Cj^{-p'}$. In fact, we will see in the next subsection that the difference between $U_n^j$ and $U_n$ are precisely those $j$-cylinders that cross the topological boundary of $U_n$. 
\end{remark}

Recall that $\beta_\ell$ is defined by~\eqref{e.beta} as an alternative way to study the short return properties of $\{U_n\}$ by synchronizing $K$ and $n$ (thus taking only one limit). As a by-product of the previous theorem, we have the following relation between $\{\beta_\ell\}$ and $\{\hat\alpha_\ell\}$:

\begin{proposition}\label{p.1}
Under the assumptions of Theorem~\ref{m.4}, for any increasing sequence  $\{s_n\}$ with $s_n\to\infty$ and $s_n\mu(U_n)\to 0$, the sequence $\{\beta_\ell\}$ defined by~\eqref{e.beta} exists and satisfies
$
\beta_\ell=\hat{\alpha}_\ell
$
for all $\ell \ge 1$.
\end{proposition}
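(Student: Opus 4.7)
The plan is to sandwich $\mu_{U_n}(\tau^{\ell-1}_{U_n}\le s_n)$ between $\hat\alpha_\ell(K)$ and $\hat\alpha_\ell(K)+o_K(1)$, using the Gibbs--Markov short-return estimate already established inside the proof of Theorem~\ref{m.4}. The case $\ell=1$ is trivial since $\tau^0_{U_n}=0$ on $U_n$, so I focus on $\ell\ge 2$.

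\textbf{Step 1 (easy direction).} Since $s_n\to\infty$, for each fixed $K$ we have $s_n\ge K$ eventually. The inclusion $\{\tau^{\ell-1}_{U_n}\le K\}\subset\{\tau^{\ell-1}_{U_n}\le s_n\}$ gives $\hat\alpha_\ell(K)\le \liminf_n \mu_{U_n}(\tau^{\ell-1}_{U_n}\le s_n)$, and letting $K\to\infty$ yields $\hat\alpha_\ell\le\liminf_n\mu_{U_n}(\tau^{\ell-1}_{U_n}\le s_n)$.

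\textbf{Step 2 (hard direction, via a short-return estimate).} For $\ell\ge 2$, any $x\in U_n$ with $\tau^{\ell-1}_{U_n}(x)\in(K,s_n]$ has $f^j(x)\in U_n$ at some $j\in(K,s_n]$, so
$$
\mu_{U_n}\bigl(K<\tau^{\ell-1}_{U_n}\le s_n\bigr)\le \frac{1}{\mu(U_n)}\sum_{j=K+1}^{s_n}\mu\bigl(U_n\cap f^{-j}U_n\bigr).
$$
Now I would reuse the Gibbs--Markov bound derived in the proof of Theorem~\ref{m.4}, namely $\mu(U_n\cap f^{-j}U_n)\lesssim \mu(U_n)\mu(U_n^j)$, together with the standing hypothesis $\mu(U_n^j)\le \mu(U_n)+Cj^{-p'}$ (which holds for all $j$, trivially when $j\ge\kappa_n$). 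Summing gives
$$
\mu_{U_n}\bigl(K<\tau^{\ell-1}_{U_n}\le s_n\bigr)\lesssim s_n\mu(U_n)+\sum_{j>K}j^{-p'}\lesssim s_n\mu(U_n)+K^{-(p'-1)},
$$
where the exponent $p'>1$ is what makes the tail sum summable.

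\textbf{Step 3 (conclude).} Splitting $\mu_{U_n}(\tau^{\ell-1}_{U_n}\le s_n)=\mu_{U_n}(\tau^{\ell-1}_{U_n}\le K)+\mu_{U_n}(K<\tau^{\ell-1}_{U_n}\le s_n)$ and passing to $\limsup_n$, the hypothesis $s_n\mu(U_n)\to 0$ kills the first error term, yielding
$$
\limsup_n\mu_{U_n}(\tau^{\ell-1}_{U_n}\le s_n)\le \hat\alpha_\ell(K)+C K^{-(p'-1)}.
$$
Sending $K\to\infty$ collapses the right-hand side to $\hat\alpha_\ell$, and combining with Step 1 shows that $\beta_\ell$ exists and equals $\hat\alpha_\ell$.

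\textbf{Expected obstacle.} There is nothing genuinely new to invent: the only nontrivial input is the short-return bound $\mu(U_n\cap f^{-j}U_n)\lesssim\mu(U_n)\mu(U_n^j)$, which is exactly the estimate extracted from big-image/distortion in the proof of Theorem~\ref{m.4}. The bookkeeping step I would be careful about is verifying that the same bound applies uniformly for $j$ ranging through $(K,s_n]$ without needing any assumption on the relative size of $s_n$ and $\kappa_n$; this is where the observation ``$U_n^j=U_n$ whenever $j\ge\kappa_n$, so the hypothesis $\mu(U_n^j)\le\mu(U_n)+Cj^{-p'}$ holds trivially in that range'' becomes the key remark.
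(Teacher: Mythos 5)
Your proof is correct, and it takes a cleaner route than the paper's. The paper estimates $|\beta_\ell-\hat\alpha_\ell(2K)|$ by summing $\PP(X_0=1\wedge Z_j\ge 1)$ over blocks $j=1,\dots,s_n/(2K+1)$, splitting into a short range $j\le j_0\approx\kappa_n/(2K+1)$ (handled by the Gibbs--Markov distortion bound exactly as in the $\cR_2$ estimate of Theorem~\ref{m.4}) and a long range $j>j_0$ (handled by exponential $\phi$-mixing, producing a $\phi^1(K)$ tail term). You instead invoke the single distortion estimate $\mu(U_n\cap T^{-j}U_n)\lesssim\mu(U_n)\mu(U_n^j)$ uniformly over the entire range $j\in(K,s_n]$, which works because that estimate is valid for all $j\ge 1$ (its derivation uses only big image and bounded distortion on $j$-cylinders), and because $U_n^j=U_n$ for $j\ge\kappa_n$ makes the hypothesis $\mu(U_n^j)\le\mu(U_n)+Cj^{-p'}$ trivially true there, as you correctly flag. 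This removes the need for the $\phi$-mixing step and the $I/II$ split entirely; in the end you get $s_n\mu(U_n)+K^{-(p'-1)}$ where the paper gets $s_n\mu(U_n)+K^{-(p'-1)}+\phi^1(K)$, both of which vanish under $n\to\infty$ then $K\to\infty$. Your Step 1 direction, which the paper does not spell out (it phrases the whole argument as a two-sided absolute-value estimate), is a correct and worthwhile clarification since it isolates the monotone inclusion that gives the easy bound.
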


\begin{proof}
We estimate $|\beta_\ell-\hat{\alpha}_\ell|$ by writing:
\begin{align*}
&|\mu_{U_n}(\tau_{U_n}^{\ell-1}\le s_n) - \mu_{U_n}(\tau^{\ell-1}_{U_n}\le 2K)|\\
\le& \PP_{U_n}\left(\sum_{i=2K+1}^{s_n}X_i\ge1\right)\\
\le& \frac{1}{\mu(U_n)}\sum_{j=1}^{s_n/(2K+1)}\PP(X_0=1, Z_j\ge 1)\\
\le& I+ II,
\end{align*}
where $I$ is the sum over $j$ from $1$ to $j_0=[\kappa_n/(2K+1)]+2$, and $II$ is the sum from $j_0$ to $s_n/(2K+1)$.

For $II$, we follow the estimation of $\cR_2$ in the proof of Theorem~\ref{t.3} and obtain by the $\phi$-mixing assumption: 
\begin{align*}
II\le& \frac{1}{\mu(U_n)}\sum_{j=j_0}^{s_n/(2K+1)} \mu(U_n)\PP(Z_0\ge 1)+\mu(U_n)\phi((j-1)(2K+1)-\kappa_n)\\
\le& \frac{s_n\mu(U_n)}{2K+1}+\phi^1(K),
\end{align*}
where $\phi^1$ is the tail sum of $\phi$ as before. 

For $I$, we use the argument in the proof of Theorem~\ref{m.4} and get:
\begin{align*}
I=&\frac{1}{\mu(U_n)}\sum_{j=1}^{j_0}\PP(X_0=1,Z_j\ge 1) \\
\le&\frac{1}{\mu(U_n)}\sum_{j=1}^{j_0} \sum_{k=j(2K+1)}^{(j+1)(2K+1)}\mu(U\cap T^{-k}U_n)\\
=&\frac{1}{\mu(U_n)}\sum_{k=2K+1}^{(j_0+1)(2K+1)}\mu(U\cap T^{-k}U_n)\\
\le&\frac{1}{\mu(U_n)}\sum_{k=2K+1}^{(j_0+1)(2K+1)}\mu(U_n)\mu(U^j_n)\\
\le& (\kappa_n+4(2K+1))\mu(U_n)+\sum_{j=2K+1}^{\kappa_n}j^{-p'}\\
\lesssim&(\kappa_n+4(2K+1))\mu(U_n)+K^{-(p'-1)}.
\end{align*}

Collect the estimations above and send $n$ to infinity, we see that
$$
|\beta_\ell-\hat{\alpha}_{\ell}(2K)|\lesssim\phi^1(K)+ K^{-(p'-1)},
$$
which converges to zero following the limit in $K$. This concludes the proof.
\end{proof}
Recall that $\alpha_1$ is defined by~\eqref{e.4} and satisfies $\alpha_1=\hat{\alpha}_1-\hat\alpha_2$. 
In particular, this proposition shows that $\alpha_1$ coincides with the extremal index $\theta=\lim_n \mu_{U_n}(\tau_{U_n}> s_n)$ defined in~\cite{FFRS}.

\subsection{From cylinders to open sets}
Now we shift our attention to $U_n$'s that are not necessarily unions of cylinder sets. For this purpose, let $\Omega = \bM$ be a compact Riemannian manifold and $T=f$ a differentiable map on $\bM$. We will still assume that there is a (at most) countable partition $\cA$, with respect to which the system is Gibbs-Markov. 
Examples of such systems include Markov interval maps, higher dimensional expanding maps with Markov partition, and the return map of Young's towers for non-invertible maps. 

We will take $\{U_n\}$ a sequence of nested open sets with measure converging to zero. In particular, one can take a continuous function $\vp:\bM\to\RR\cup\{\pm\infty\}$ and a sequence of thresholds $\{u_n\}$, and let $U_n = \{x:f(x)>u_n\}$. As before, we are interested in the limiting distribution of $\PP(\zeta_{U_n}=k)$, which can be immediately translated into the distribution of $\PP(\xi^{w_n}_{u_n}=k)$ according to Theorem~\ref{m.3}, where $\{w_n\}$ is a sequence of integers satisfying~\eqref{e.1}.

Note that the sets $U_n$ are very  likely not unions of cylinders in $\cA^n$, thus one cannot directly apply Theorem~\ref{m.4}. To solve this issue, we will approximate $U_n$ by unions of cylinders from inside. Given any set $U\subset M$ and $\rho>0$, we write  $B_\rho(U) = \bigcup_{x\in U}B_\rho(x)$ for the $\rho$-neighborhood of $U$.

The main result of this section is the following theorem:

\begin{theorem}\label{t.4}
Let $(\bM,\mu,f,\cA)$ be a Gibbs-Markov system, and $\{U_n\}$ be a nested sequence of open sets that satisfies Assumption~\ref{a.1}, with $\{\hat{\alpha}_{\ell}\}$ exists and satisfies $\sum_{\ell}\ell\hat{\alpha}_{\ell}<\infty$. Write $\kappa_n$ the smallest positive integer with $\diam\cA^{\kappa_n}\le r_n$ where $r_n$ is the sequence in Assumption~\ref{a.1}.  We assume that:
\begin{enumerate}[label=(\alph*)]
\item $\kappa_n\mu(U_n)\to 0$;
\item $U_n$ have small boundary: there exists $C>0$ and $p'>1$, such that \\$\mu\left(\bigcup_{A\in\cA^j, A\subset  B_{r_n}(\partial U_n) }A\right)\le C j^{-p'}$ for all $j\le\kappa_n$.
\end{enumerate}

Then the entry times distribution $\zeta_{U_n}= \sum_{k=0}^{\tau/\mu(U_n)-1}\II_{U_n}\circ f^k$ satisfy
$$
\PP(\zeta_{U_n} = k)\to m(\{k\})
$$
as $n\to\infty$ for every $k\in\NN_0$, where $m$ is the compound Poisson distribution with parameters $\{\tau\alpha_{1}\lambda_\ell\}$.

In particular, the rare event process $\xi$ has the same limiting distribution:
$$
\PP(\xi^{w_n}_{u_n} = k)\to m(\{k\}),
$$
where $w_n$ is a sequence of integers given by~\eqref{e.1}.
\end{theorem}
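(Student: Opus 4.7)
The plan is to sandwich $U_n$ between inner and outer approximations by unions of $\kappa_n$-cylinders, show that the entry times distribution for $U_n$ is $o(1)$-close to that for either approximation, and then invoke Theorem~\ref{m.4} on the cylinder side. I would set
\begin{equation*}
U_n^{+} = \bigcup\{A\in\cA^{\kappa_n}: A\cap U_n\ne\emptyset\},\qquad
U_n^{-} = \bigcup\{A\in\cA^{\kappa_n}: A\subset U_n\},
\end{equation*}
so $U_n^{-}\subset U_n\subset U_n^{+}$. Since every element of $\cA^{\kappa_n}$ has diameter at most $r_n$, the inclusions $U_n^{i}\subset U_n^{-}$ and $U_n^{+}\subset U_n^{o}$ hold, and Assumption~\ref{a.1} delivers $\mu(U_n^{+}\setminus U_n^{-})= o(\mu(U_n))$. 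In particular $\mu(U_n^{\pm})=(1+o(1))\mu(U_n)$.

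Next I would compare the distribution of $\zeta_{U_n}$ with that of $\zeta_{U_n^{-}}$. The symmetric difference between $\{\zeta_{U_n}^{N}=k\}$ and $\{\zeta_{U_n^{-}}^{N}=k\}$ is contained in the event that some $0\le i<N$ has $f^i(x)\in U_n\setminus U_n^{-}\subset U_n^{+}\setminus U_n^{-}$; by stationarity this event has probability at most $N\mu(U_n^{+}\setminus U_n^{-})=o(1)$ for $N=\tau/\mu(U_n)$. The discrepancy between the two natural windows $\tau/\mu(U_n)$ and $\tau/\mu(U_n^{-})$ contributes another $o(1)$ by the exact argument used in the proof of Theorem~\ref{m.3}. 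Thus it is enough to show that $\PP(\zeta_{U_n^{-}}^{\tau/\mu(U_n^{-})}=k)$ converges to the compound Poisson distribution with parameters $\{\tau\alpha_{1}\lambda_\ell\}$.

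To that end I would apply Theorem~\ref{m.4} to $\{U_n^{-}\}$ (refining the index $\kappa_n$ if needed so that the cylinder sequence is nested). Modulo the estimate $\mu(U_n^{-})\sim\mu(U_n)$, the hypotheses of Theorem~\ref{m.4} reduce to: (i) $\kappa_n\mu(U_n^{-})\to 0$, immediate from (a); (ii) existence of $\hat\alpha_\ell$ for $\{U_n^{-}\}$ with $\sum\ell\hat\alpha_\ell<\infty$, matching the parameters of $\{U_n\}$, because a short return to $U_n^{-}$ differs from one to $U_n$ only on a set whose conditional mass is bounded by $\mu(U_n^{+}\setminus U_n^{-})/\mu(U_n)=o(1)$; and (iii) the small-boundary condition $\mu((U_n^{-})^{j})\le\mu(U_n^{-})+Cj^{-p'}$, which I would extract from condition (b) by noting that a $j$-cylinder $A\in\cA^j$ that meets $U_n^{-}$ but is not contained in it must contain a $\kappa_n$-cylinder inside $U_n$ and a $\kappa_n$-cylinder meeting $U_n^{c}$, so $A$ straddles $\partial U_n$ and, for $j$ large enough that $\diam\cA^{j}\le r_n$, lies in $B_{r_n}(\partial U_n)$. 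Finally, Theorem~\ref{m.3} transfers the resulting compound Poisson convergence from $\zeta_{U_n}$ to the rare event process $\xi^{w_n}_{u_n}$. The main obstacle I expect is step (iii): verifying rigorously that the boundary cylinders for $U_n^{-}$ inherit the polynomial-tail bound from condition (b), in particular for the small-to-moderate values of $j\le\kappa_n$ for which $\diam\cA^{j}$ may exceed $r_n$ and one must separately absorb the contribution into the $o(\mu(U_n))$-bound from Assumption~\ref{a.1}.
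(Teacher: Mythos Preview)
Your approach is essentially identical to the paper's: define the inner cylinder approximation $U_n^{-}$ (the paper calls it $V_n$), use Assumption~\ref{a.1} to get $\mu(U_n\setminus U_n^{-})=o(\mu(U_n))$, verify the hypotheses of Theorem~\ref{m.4} for $\{U_n^{-}\}$ via condition~(b), and then transfer both the parameters $\hat\alpha_\ell$ and the distribution of $\zeta$ from $U_n^{-}$ to $U_n$ by exactly the comparison arguments you sketch (these are Lemmas~\ref{l.5.5} and~\ref{l.5.6} in the paper). Your flagged obstacle in step~(iii) is real and is handled in the paper by the one-line inclusion $V_n^j\setminus V_n\subset \bigcup_{A\in\cA^j,\,A\subset B_{r_n}(\partial U_n)}A$, which is precisely the point where condition~(b) enters; your concern about small $j$ is legitimate but the paper treats it at the same level of rigor you do.
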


To prove this theorem we first introduce some notations. Let $r_n$ be the sequence of real numbers given by Assumption~\ref{a.1}. For each $n$, we take $\kappa_n$ to be the smallest integer such that $\diam(\cA^{\kappa_n})\le r_n$, and put
$$
V_n = \bigcup_{A\in\cA^{\kappa_n},A\subset U_n}A.
$$
In other words, $V_n$ is the approximation of $U_n$ from inside by $\kappa_n$-cylinders. Due to the choice of $\kappa_n$, we have 
$$
U_n^i\subset V_n\subset U_n,
$$
and
\begin{equation}\label{e.5.4}
\frac{\mu(U_n\setminus V_n)}{\mu(U_n)} = o(1).
\end{equation}
It then follows that $\kappa_n\mu(V_n)\to 0$, provided that $\kappa_n\mu(U_n)\to 0.$

Theorem~\ref{m.4} requires us to estimate the measure of $V_n^j$, which is the union of $j$-cylinders that has non-empty intersection with $V_n$. Observe that
$$
V_n^j\subset \bigcup_{A\in\cA^j, A\cap U_n\ne\emptyset }A;
$$
moreover, the difference between $V_n$ and $V_n^j$ satisfies
$$
V_n^j\setminus V_n\subset \bigcup_{A\in\cA^j, A\subset  B_{r_n}(\partial U_n) }A.
$$
This gives
\begin{equation}
\mu(V_n^j)\le \mu(V_n)+\mu\left(\bigcup_{A\in\cA^j, A\subset  B_{r_n}(\partial U_n) }A\right)\lesssim \mu(V_n)+j^{-p'},
\end{equation}
thanks to the assumption (b).

Then we can apply Theorem~\ref{m.4} on the sequence of nested cylinder sets $V_n\in\cA^{\kappa_n}$ to get
$$
\PP(\zeta_{V_n} = k)\to m(\{k\})
$$
where $m$ is the compound Poisson distribution with parameters $\{\alpha^V_{1}\tau\lambda^V_\ell\}$ defined using $\{V_n\}$. It remains to show that the parameters $\alpha^U_{1}$, $\lambda^U_\ell$ defined using $\{U_n\}$ coincides with those defined using $\{V_n\}$, and that $\zeta_{U_n}$ has the same limiting distribution with $\zeta_{V_n}$.

In view of Theorem~\ref{t.1}, to prove that parameters satisfy $\lambda^U_\ell=\lambda^V_\ell$, we only need to show the following lemma:
\begin{lemma}\label{l.5.5}
Let $V_n, U_n$ be two sequences of nested sets with $V_n\subset U_n$ for each $n$. Put 
$$
\hat\alpha^*_\ell = \lim_{K\to\infty}\lim_{n\to\infty}\mu_{*_n}(\tau^{\ell-1}_{*_n}\le K),*=U,V.
$$
Then $\hat\alpha^U_\ell=\hat\alpha^V_\ell$ provided that~\eqref{e.5.4} holds.
\end{lemma}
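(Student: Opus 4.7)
The plan is to prove the two families of limits agree by a direct comparison argument, exploiting that the limit in $n$ is taken before the limit in $K$, so $K$ can be treated as a fixed constant throughout.

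First, observe that since $V_n\subset U_n$, the hypothesis $\mu(U_n\setminus V_n)=o(\mu(U_n))$ gives $\mu(V_n)/\mu(U_n)\to 1$ as $n\to\infty$. Now the numerators defining the conditional probabilities differ in two places: the set on which we condition, and the return time function itself. I will treat these separately. For the conditioning set, since $U_n\setminus V_n$ has measure $o(\mu(U_n))$, swapping $U_n$ for $V_n$ in the conditioning event changes the measure of the numerator by at most $o(\mu(U_n))$, uniformly in $K$.

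For the return time, note that any return to $V_n$ is a return to $U_n$, so $\tau^{\ell-1}_{V_n}(x)\ge \tau^{\ell-1}_{U_n}(x)$ pointwise, which yields the inclusion
$$\{\tau^{\ell-1}_{V_n}\le K\}\subset\{\tau^{\ell-1}_{U_n}\le K\}.$$
The symmetric difference consists exactly of those points that accumulate $\ell-1$ entries to $U_n$ by time $K$ but fewer than $\ell-1$ entries to $V_n$; such a point must visit $U_n\setminus V_n$ at some time $1\le j\le K$. Hence the symmetric difference is contained in $\bigcup_{j=1}^{K}f^{-j}(U_n\setminus V_n)$, whose measure is bounded by $K\mu(U_n\setminus V_n)=K\cdot o(\mu(U_n))$ by $f$-invariance of $\mu$.

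Combining these two estimates gives
$$\bigl|\mu(V_n\cap\{\tau^{\ell-1}_{V_n}\le K\})-\mu(U_n\cap\{\tau^{\ell-1}_{U_n}\le K\})\bigr|\le (K+1)\,\mu(U_n\setminus V_n)=o(\mu(U_n)),$$
where the $o(\cdot)$ is for $n\to\infty$ with $K$ held fixed. Dividing by $\mu(U_n)$ (and using $\mu(V_n)/\mu(U_n)\to 1$ to replace the denominator on the $V$-side) we obtain
$$\lim_{n\to\infty}\mu_{V_n}(\tau^{\ell-1}_{V_n}\le K)=\lim_{n\to\infty}\mu_{U_n}(\tau^{\ell-1}_{U_n}\le K)$$
for every fixed $K$, provided both limits exist; but if one exists the above identity forces the other to exist and to equal it. Finally, sending $K\to\infty$ yields $\hat\alpha^V_\ell=\hat\alpha^U_\ell$.

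No real obstacle here: the key observation is just that the order of limits ($n$ before $K$) is exactly what allows the factor of $K$ to be absorbed. I would emphasize this point in the write-up, since this ordering is precisely the desynchronization feature highlighted earlier in the introduction.
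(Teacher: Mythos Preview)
Your proof is correct and follows essentially the same approach as the paper: both arguments separate the discrepancy into (i) the change in conditioning set, controlled by $\mu(U_n\setminus V_n)=o(\mu(U_n))$, and (ii) the change in the return-time event, controlled by the inclusion into $\bigcup_{j=1}^{K}f^{-j}(U_n\setminus V_n)$ with measure $\le K\mu(U_n\setminus V_n)$. Your write-up is slightly more streamlined in that you bound the numerators directly and then handle the denominator via $\mu(V_n)/\mu(U_n)\to 1$, whereas the paper inserts an intermediate triangle-inequality step, but the underlying estimates are identical.
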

\begin{proof}
To simplify notation, we drop the index on $U_n$. We have to estimate:
\begin{align*}
&\left|\mu_{U}(\tau^{\ell-1}_{U}\le K)-\mu_{V}(\tau^{\ell-1}_{V}\le K)\right|\\
\le &\frac{1}{\mu(U)}\left|\mu(\tau^{\ell-1}_{U}\le K\wedge U)-\mu(\tau^{\ell-1}_{V}\le K\wedge V)\right|+\frac{\mu(U\setminus V)}{\mu(U)}\mu_{V}(\tau^{\ell-1}_{V}\le K)\\
\le &\frac{1}{\mu(U)}\left|\mu(\tau^{\ell-1}_{U}\le K\wedge U)-\mu(\tau^{\ell-1}_{V}\le K\wedge U)\right|
+\frac{1}{\mu(U)}\mu(U\setminus V)\\
&+\frac{\mu(U\setminus V)}{\mu(U)}\mu_{V}(\tau^{\ell-1}_{V}\le K).
\end{align*}
The second and third term on the right-hand-side converge to zero as $n\to \infty$, thanks to~\eqref{e.5.4}. The first term is estimated as
\begin{align*}
&\frac{1}{\mu(U)}\left|\mu(\tau^{\ell-1}_{U}\le K\wedge U)-\mu(\tau^{\ell-1}_{V}\le K\wedge U)\right|\\
\le&\mu_U(\tau_{U\setminus V}\le K)\\
\le& \frac{1}{\mu(U)} K\mu(U\setminus V)\to 0\mbox{ as }n\to\infty.
\end{align*}
This finishes the proof of the lemma.
\end{proof}
As a simple consequence of this lemma, we have $\sum_{\ell}\ell\hat{\alpha}^V_{\ell}<\infty$, provided that $\sum_{\ell}\ell\hat{\alpha}^U_{\ell}<\infty$.

Finally we control the difference between $\zeta_{U_n}$ and $\zeta_{V_n}.$
\begin{lemma}\label{l.5.6}
Assume that $\{U_n\}$, $\{V_n\}$ are two sequences of nested sets with $V_n\subset U_n$. Moreover, assume that~\eqref{e.5.4} holds. Then 
$$ 
|\PP(\zeta_{U_n}=k)-\PP(\zeta_{V_n}=k)|\to 0
$$
as $n\to\infty$.
\end{lemma}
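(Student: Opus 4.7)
The plan is to introduce an intermediate counting random variable and chain two estimates via the triangle inequality. Set $N_n=\lfloor\tau/\mu(U_n)\rfloor$ and $M_n=\lfloor\tau/\mu(V_n)\rfloor$; note that $M_n\ge N_n$ since $V_n\subset U_n$. Define
$$
\tilde\zeta_{V_n}=\sum_{j=0}^{N_n-1}\II_{V_n}\circ f^j,
$$
which counts visits to the smaller set $V_n$ over the same time window that $\zeta_{U_n}$ uses. Then
$$
|\PP(\zeta_{U_n}=k)-\PP(\zeta_{V_n}=k)|\le \PP(\zeta_{U_n}\ne\tilde\zeta_{V_n})+\PP(\tilde\zeta_{V_n}\ne\zeta_{V_n}),
$$
and I would bound each term separately.

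The first term handles switching the indicator from $\II_{U_n}$ to $\II_{V_n}$ on a common time window: one has $\{\zeta_{U_n}\ne\tilde\zeta_{V_n}\}\subset\bigcup_{j=0}^{N_n-1}f^{-j}(U_n\setminus V_n)$, so by invariance of $\mu$ together with the hypothesis~\eqref{e.5.4},
$$
\PP(\zeta_{U_n}\ne\tilde\zeta_{V_n})\le N_n\,\mu(U_n\setminus V_n)\le \tau\,\frac{\mu(U_n\setminus V_n)}{\mu(U_n)}=o(1).
$$
For the second term, both counts use $\II_{V_n}$ but $\zeta_{V_n}$ runs for $M_n-N_n$ additional iterates, so a union bound gives
$$
\PP(\tilde\zeta_{V_n}\ne\zeta_{V_n})\le (M_n-N_n)\,\mu(V_n)\le \left(\frac{\tau\,\mu(U_n\setminus V_n)}{\mu(U_n)\mu(V_n)}+1\right)\mu(V_n)=\tau\,\frac{\mu(U_n\setminus V_n)}{\mu(U_n)}+\mu(V_n),
$$
which again tends to zero by~\eqref{e.5.4} and the fact that $\mu(V_n)\le\mu(U_n)\to 0$. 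Combining these bounds gives the desired convergence.

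There is no serious obstacle here: each error term reduces to multiplying the length of the relevant time window by a measure, and the hypothesis $\mu(U_n\setminus V_n)=o(\mu(U_n))$ is exactly what makes both products vanish. The only mild subtlety is the mismatch between the time horizons $\tau/\mu(U_n)$ and $\tau/\mu(V_n)$, which is precisely what motivates introducing $\tilde\zeta_{V_n}$ as an intermediate quantity so that one can separate the ``different indicator'' error from the ``different window length'' error.
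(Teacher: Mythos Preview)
Your proof is correct and follows essentially the same approach as the paper: both split the error into a piece coming from visits to $U_n\setminus V_n$ on the common time window $[0,\tau/\mu(U_n))$ and a piece coming from the extra iterates in $[\tau/\mu(U_n),\tau/\mu(V_n))$, and both pieces are controlled by a union bound together with~\eqref{e.5.4}. The only cosmetic difference is that you introduce the intermediate variable $\tilde\zeta_{V_n}$ explicitly, whereas the paper bounds the symmetric difference of the two events directly.
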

\begin{proof}
First note that $\frac{1}{\mu(V_n)}>\frac{1}{\mu(U_n)}$, i.e., $\zeta_{V_n}$ contains more terms. We have
\begin{align*}
|\PP(\zeta_{U_n}=k)-\PP(\zeta_{V_n}=k)|\le& \PP\left(\tau_{U_n\setminus V_n}\le \frac{1}{\mu(U_n)}\right)+\PP\left(\sum\limits_{i=1/\mu(U_n)}^{1/\mu(V_n)}\II_{U_n}\circ f^i\ge 1\right)\\
\le&\frac{1}{\mu(U_n)}\mu(U_n\setminus V_n) + \mu(U_n)\left(\frac{1}{\mu(V_n)}-\frac{1}{\mu(U_n)}\right)\to 0.
\end{align*}
The proof is finished.
\end{proof}
With these lemmas, we conclude the proof of Theorem~\ref{t.4}.
\begin{remark}
It can be seen from the proof that one does not need $V_n$ to be subsets of $U_n$. If $\{V_n\}$ is a nested sequence such that
$$
\frac{\mu(U_n\dotminus V_n)}{\mu(U_n)}\to 0 \mbox{ as } n\to\infty
$$
where $\dotminus$ is the symmetric difference, then the same proof will show that $\hat{\alpha}^U_\ell = \hat{\alpha}^V_\ell$ for all $\ell\ge 1$. Similarly, $\zeta_{U_n}$ and $\zeta_{V_n}$ must converge to the same compound Poisson distribution with the same parameters.

Note that the proof of the previous two lemmas does not require the system to be Gibbs-Markov or even mixing. The proof also applies to invertible systems without any change.
\end{remark}
\begin{remark}
Note that in the assumption (b), we only need the estimate on the boundary of $U_n$ for $j<\kappa_n$. This coincides with our statement of Assumption~\ref{a.2} at the beginning of the paper, where we need $r$ to be small but not too small, where the lower bound depends on $n$.
\end{remark}

Assumption (a) of Theorem~\ref{m.4} is rather mild. Normally the sets $U_n$ are the $\rho_n$-neighborhood of $\Lambda$ for some $\rho_n>0$, and the measure of $U_n$ are of order $\rho_n^a$ for some $a>0$. Then Assumption~\ref{a.1} holds with $r_n=\rho_n^b$ for some $b>1$ large enough. Since the diameter of $n$-cylinders are exponentially small, $\kappa_n$ is of order $|\log \rho_n|$. In this case, $\kappa_n\mu(U_n)\to 0$ holds.

On the other hand, to achieve assumption (b) in Theorem~\ref{t.4}, note that $\diam \cA^j\lesssim \lambda^j$, so $V_n^j$ is the $(\rho_n+\lambda^j)$-neighborhood of $\Lambda$, and  $V^j_n\setminus V_n$ consists of the $(\lambda^j)$-neighborhood of $\partial U_n$, whose measure can be controlled if $\mu$ is absolutely continuous with respect to the volume on $\bM$ and if $\Lambda$ is `nice' (for example, a embedded submanifold with dimension less than $\dim\bM$).

We conclude this subsection with the following observation:

\begin{remark}\label{r.invertible}
Note that the proof of the previous theorems does not depend on whether the system is non-invertible or not. In particular, Theorem~\ref{t.4} holds when the systems is invertible and $\phi$-mixing (where the partition $\cA^n$ should be defined using two-sided joint, i.e., $\cA^n = \bigvee_{i=-n}^n f^{-i}\cA$). Such systems include Axiom A diffeomorphisms with equilibrium states and dispersing billiards.

Similarly, Theorem~\ref{m.4} also holds for the systems mentioned above, as the only ingredient in the proof is the distortion estimate, which holds as long as the systems has sufficient hyperbolicity. 
\end{remark}

\subsection{The inducing argument}
Now let $f$ be a non-invertible, differentiable map $f$ on a compact manifold $\bM$, preserving an invariant measure $\mu$. We assume that $\vp:\bM\to\RR\cup\{\pm\infty\}$ is a continuous function that achieves its maximum on a set $\Lambda$ with zero measure. The following theorem is proven in~\cite{FFM}:
\begin{theorem}\cite[Theorem 2.C]{FFM}\label{t.5}
Assume that there is a set $\Omega\subset \bM$ with positive $\mu$ measure, with $\Lambda\subset \Omega$. Assume that there is a sequence of thresholds $\{u_n\}$, such that the sets $U_n=\{x:\vp(x)>u_n\}$ are contained in $\Omega$ for $n$ large enough. Moreover, assume that the induced map $T:\Omega\to\Omega$ is defined using the first return map of $f$ on $\Omega$, such that the return time function is integrable with respect to the induced measure $\mu_0=\mu|_\Omega$. 

Then if the rare event process $\xi$ for the induced system $(\Omega,T,\mu_0)$ is compound Poisson distributed, so is the rare event process for the original system $(\bM,f,\mu)$.
\end{theorem}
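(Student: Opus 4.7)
My plan is to translate, via Theorem~\ref{m.3}, the rare-event convergence for $f$ into one of entry-time counts, and then exploit the pointwise identity between $f$- and $T$-entries into $U_n\subset\Omega$ given by the first-return inducing. Write $w_n=\lfloor\tau/\mu(U_n)\rfloor$, $N_0=\lfloor\tau\mu(\Omega)/\mu(U_n)\rfloor$, and set
\begin{equation*}
\zeta_f^W(x)=\sum_{k=0}^{W-1}\II_{U_n}\circ f^k(x),\qquad \zeta_T^N(x)=\sum_{\ell=0}^{N-1}\II_{U_n}\circ T^\ell(x).
\end{equation*}
By Theorem~\ref{m.3} the problem reduces to showing that $\zeta_f^{w_n}$ converges in distribution under $\mu$ to the same compound Poisson law $m$ that, by hypothesis, governs $\zeta_T^{N_0}$ under the probability $\mu_0/\mu(\Omega)$; Kac's formula $\int_\Omega R\,d\mu_0=1$ makes the two time scales compatible in the sense $N_0\cdot\mu(U_n)/\mu(\Omega)\to\tau$.

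The pointwise identity is the crux: for $x\in\Omega$, since $U_n\subset\Omega$, every $f$-visit $f^k(x)\in U_n$ forces $k=S_\ell(x):=\sum_{j<\ell}R\circ T^j(x)$ with $f^k(x)=T^\ell(x)$, whence
\begin{equation*}
\zeta_f^{w_n}(x)=\zeta_T^{N(x)}(x),\qquad N(x):=\#\{\ell\ge 0:S_\ell(x)<w_n\}.
\end{equation*}
Birkhoff's theorem applied to $R$ under $T$ yields $N(x)/N_0\to 1$ in $\mu_0$-probability. For any fixed $\delta>0$, off a set of $\mu_0$-measure $o(1)$ we have $|N(x)-N_0|\le\delta N_0$, and the defect $|\zeta_T^{N(x)}-\zeta_T^{N_0}|$ is dominated by $T$-entries into $U_n$ in a window of length at most $\delta N_0$. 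Markov and $T$-invariance of $\mu_0$ bound the $(\mu_0/\mu(\Omega))$-probability that this defect is nonzero by $\delta N_0\cdot\mu(U_n)/\mu(\Omega)=\delta\tau+o(1)$. Sending $n\to\infty$ first and $\delta\to 0$ afterwards matches the $(\mu_0/\mu(\Omega))$-limit of $\zeta_f^{w_n}$ with the limit of $\zeta_T^{N_0}$, namely $m$.

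To transfer from $\mu_0/\mu(\Omega)$ to $\mu$, I use the Kac/Rokhlin decomposition $\mu=\sum_{t\ge 0}f^t_*(\mu_0|_{\{R>t\}})$. For $y\in\Omega$ with $R(y)>t$, the orbit $f^t y,\ldots,f^{R(y)-1}y$ avoids $\Omega\supset U_n$, so $\zeta_f^{w_n}(f^t y)=\zeta_f^{w_n-(R(y)-t)}(Ty)$. Truncating $R$ at a slowly growing $T_n$ with $T_n\mu(U_n)\to 0$ costs $o(1)$: the tail $\{R>T_n\}$ has $\mu$-mass $\int R\,\II_{R>T_n}\,d\mu_0\to 0$ by $L^1$-integrability, and on $\{R\le T_n\}$ the window-length discrepancy $R(y)-t\le T_n$ contributes expected $\le T_n\mu(U_n)\to 0$ entries. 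This reduces $\mu(\zeta_f^{w_n}\in B)$ to $\int R(y)\,\II_B(\zeta_f^{w_n}(Ty))\,d\mu_0(y)+o(1)$; asymptotic decorrelation of the local variable $R$ from the long-range counter $\zeta_f^{w_n}\circ T$ (inherited from whatever mixing already produces the compound Poisson limit for $T$), together with $T$-invariance of $\mu_0$ and $\int R\,d\mu_0=1$, identifies the limit with $(\mu_0/\mu(\Omega))(\zeta_T^{N_0}\in B)\to m(B)$.

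The main obstacle is the absence of a uniform Birkhoff rate for $R$, circumvented by the double-limit procedure ($n\to\infty$ then $\delta\to 0$) together with the quantitative fact that any $f$- or $T$-window of sub-linear length contributes $o(\tau)$ expected $U_n$-entries. A secondary technical point is the $R$-versus-$\zeta_T^{N_0}$ decorrelation used to close the Kac decomposition, which is essentially built into the mixing hypotheses implicit in the compound Poisson statement for $T$. Everything else is bookkeeping between the two time scales.
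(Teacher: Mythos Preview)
The paper does not prove Theorem~\ref{t.5}; it is quoted verbatim from \cite[Theorem~2.C]{FFM} and used as a black box. So there is no in-paper argument to compare your proposal against.

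On its own merits, your sketch has the right architecture: the pointwise identification $\zeta_f^{w_n}(x)=\zeta_T^{N(x)}(x)$ for $x\in\Omega$ is exactly the mechanism behind first-return inducing, and the replacement of the random $N(x)$ by the deterministic $N_0$ via Birkhoff for $R\in L^1(\mu_0)$ together with the $\delta$-window Markov bound is correct. The Kac/Rokhlin decomposition of $\mu$ and the truncation of $R$ at a level $T_n$ with $T_n\mu(U_n)\to0$ are also the standard moves.

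There is, however, a genuine gap at the final step. You reduce $\mu(\zeta_f^{w_n}\in B)$ to $\int R(y)\,\II_B\bigl(\zeta_f^{w_n}(Ty)\bigr)\,d\mu_0(y)+o(1)$ and then invoke ``asymptotic decorrelation of $R$ from $\zeta_f^{w_n}\circ T$, inherited from whatever mixing already produces the compound Poisson limit for $T$''. But the hypothesis of Theorem~\ref{t.5} is \emph{only} distributional convergence of the induced rare event process, not any mixing statement; distributional convergence alone does not force $R$ (a function of the $0$th $T$-coordinate) and $\zeta_T^{N_0}$ (which depends on coordinates $0,\dots,N_0-1$, not just the far future) to decorrelate. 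Even if one shifts by $L$ steps to make $\zeta_T^{N_0}\circ T^L$ depend only on coordinates $\ge L$, closing the factorisation still requires an explicit mixing rate for $(T,\mu_0)$, which is simply not among the stated assumptions. In the paper's application this is harmless, since the induced map is Gibbs--Markov and exponentially $\phi$-mixing, so the decorrelation is available; but as a proof of the theorem \emph{as stated}, this step is not justified. If you want a self-contained argument at this level of generality you should consult the proof in \cite{FFM} rather than assume mixing.
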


Then if $f$ is modeled by Young's tower defined using the first return map, and $\U_n\subset \Omega_0$ (as we have assumed at the beginning of this section), then one can apply Theorem~\ref{t.4} on the induced Gibbs-Markov map $T=f^R$ to obtain the compound Poisson distribution for the induced rare event process. Theorem~\ref{t.5} will then guarantee that the original system has the same distribution. This finishes the proof of Theorem~\ref{m.2}, under the extra assumption that $U_n$ is contained in the induced base $\Omega_0$ and that the tower is defined using the first return map. As a trade-off, we  do not need (the very technical) Assumption~\ref{a.3}.

This result is not very satisfactory, however, as in higher dimensions, $\Omega_0$ is usually a Cantor set with empty interior, and the tower is often defined using a higher order return map\footnote{In the case of $C^{1+\alpha}$ surface diffeomorphisms, one can always construct towers using the first return map. See~\cite[Theorem B]{CLP}}. The general case of Theorem~\ref{m.2} will be dealt with in the next section.

\section{Proof of Theorem~\ref{m.1} and the general case of Theorem~\ref{m.2}}\label{s.5}
In this section, we will prove Theorem~\ref{m.2} and~\ref{m.1} using an argument that is similar to~\cite{HV19}, which was originally motivated by~\cite{CC}. Roughly speaking, we will approximate indicator functions $\II_{\{Z_0=u\}}$ by H\"older continuous functions $\phi$, and approximate $\II_{\{V_\Delta^M=q-u\}}$ by $L^\infty$ functions $\psi$ that are constant on stable disks. This will allow us to use decay of correlations~\eqref{e.decay2} to estimate terms like $\PP(Z_0=u,V_\Delta^M=q-u)$. More importantly, we do not need to consider the case $j\le j_0$ separately while controlling $\cR_2$. As a trade-off, we have to construct $\phi$ and $\psi$ very carefully, which will require assumptions on the topological boundary of $U_n$. Also note that the desynchronization between $n$ and $K$ plays an important role in the approximation. 

In view of Theorem~\ref{m.3}, we need to show that the hitting times distribution $\zeta_{U_n}$ converges to the compound Poisson distribution with parameters $\{\tau\alpha_{1}\lambda_\ell\}$. This is stated as the following theorem:

\begin{main}\label{m.5}
Let $f$ be either a $C^{1+\alpha}$ (local) diffeomorphism or a non-invertible map that can be modeled by Young's tower, with the decay rating satisfying $\cC(k) = o(1/k)$. Assume that $\{U_n\}$ is a sequence of nested sets with $\mu(U_n)\to 0$ and $\sum_\ell \ell\hat{\alpha}_\ell<\infty $. Furthermore, assume that Assumptions 1 to 4 hold with:
\begin{enumerate}
\item $r_n = o\left(\frac{\mu(U_n)}{\cC(\Delta_n/2)}\right)$ for a sequence $\Delta_n\nearrow\infty$ with $\Delta_n\mu(U_n)\to 0$; here $\cC$ is the rate in the decay of correlations given by~\eqref{e.decay1} or~\eqref{e.decay2}; 
\item for Assumption 2:
\begin{enumerate}
	\item in the non-invertible case,  $\mu(B_{r}(\partial U_n)) = \mathcal{O}(r^{p'})$ for $p'>1/\alpha$; here $\alpha>0$ is given by~\eqref{e.exp}; 
	\item in the invertible case, $\mu(B_{r}(\partial U_n)) = \mathcal{O}(r^{p'})$ for $p'>2/\alpha$;
\end{enumerate}
\item $p''>1$ in Assumption~\ref{a.3}.
\end{enumerate}
Then the entry times distribution $\zeta_{U_n}$ satisfies
$$
\PP(\zeta_{U_n} = k)\to m(\{k\})
$$
as $n\to\infty$ for every $k\in\NN_0$, where $m$ is the compound Poisson distribution with parameters $\{\tau\alpha_{1}\lambda_\ell\}$ with $\lambda_\ell$,  $\alpha_1$ defined by~\eqref{e.3'} and~\eqref{e.4} respectively.
\end{main}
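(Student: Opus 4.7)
The plan is to apply the abstract compound binomial approximation theorem (Theorem~\ref{t.2}) to the stationary $\{0,1\}$-valued process $X_i = \II_{U_n}\circ f^i$, with $N = \lfloor\tau/\mu(U_n)\rfloor$, cluster length $2K+1$, and gap $\Delta = \Delta_n$ chosen so that $\Delta_n\mu(U_n)\to 0$. By Remark~\ref{r.2} combined with Lemma~\ref{l.3} and Theorem~\ref{t.1}, the compound binomial measure $\tilde m$ produced by Theorem~\ref{t.2} converges in the iterated limit $\lim_{K\to\infty}\lim_{n\to\infty}$ to the compound Poisson distribution with parameters $\{\tau\alpha_1\lambda_\ell\}$. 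Since $\Delta_n\PP(X_0=1) = \Delta_n\mu(U_n)\to 0$ by construction, the task reduces to showing
$$
\lim_{K\to\infty}\lim_{n\to\infty} N'(\cR_1 + \cR_2) = 0,
$$
where $N' = \tau/((2K+1)\mu(U_n))$.

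The error $\cR_2 = \sum_{n=2}^{\Delta}\PP(Z\ge 1 \wedge Z\circ f^{(2K+1)n}\ge 1)$ is the direct payoff of desynchronizing $K$ from $n$. Each summand has a gap of at least $K$ between the two events, so for indices $j$ large enough that $j(2K+1) \ge s(k)$ (with $s(\cdot)$ as in Assumption~\ref{a.3}), decay of correlations~\eqref{e.decay1} or~\eqref{e.decay2} applied to Hölder approximations of $\II_{\{Z\ge 1\}}$ produces bounds that after summation and division by $(2K+1)\mu(U_n)$ vanish in the iterated limit. For the initial short range of $j$'s, Assumption~\ref{a.3} directly supplies $G(k)\le Ck^{-p''}$ with $p''>1$, which is summable and disappears once $K\to\infty$. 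In the special subcase of Section~\ref{s.6} where the tower is built from the first return map and $U_n\subset\Omega_0$, this whole step is replaced by the Gibbs-Markov $\phi$-mixing estimate that bypasses Assumption~\ref{a.3}.

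The technical core of the proof is $\cR_1$. The function $\II_{\{Z_0=u\}}$ depends only on iterates up to time $2K$, so I approximate it by a Hölder function $\phi$ supported in an $r_n$-neighborhood of $\{Z_0=u\}$ with Lipschitz norm $\mathcal{O}(r_n^{-1})$; the $L^1$ approximation error is $\mu(B_{r_n}(\partial U_n)) = \mathcal{O}(r_n^{p'})$ by Assumption~\ref{a.2}, which must dominate the Hölder norm — this is where the numerical constraint on $p'$ enters. Dually, $\II_{\{V_\Delta^M = q-u\}}$ is a function of iterates $f^j$ with $j\ge\Delta(2K+1)$. In the non-invertible case, any $L^\infty$ representative suffices and~\eqref{e.decay1} applies directly. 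In the invertible case, however,~\eqref{e.decay2} only accepts observables constant on stable disks, so I replace $\II_{\{V_\Delta^M = q-u\}}$ by a function $\psi$ that is constant on each $\gamma^s$: the polynomial contraction~\eqref{e.cont} along $\gamma^s$ makes $f^{-j}U_n$ nearly constant along stable disks, modulo an exceptional set controlled by Assumption~\ref{a.3} (tower tail) and Assumption~\ref{a.4} (uniform control of $\mu_{\gamma^u}(f^{-b}U_n\cap\Omega_0)$). Plugging both approximations into the decay-of-correlations inequality yields a bound of the form $\cR_1 \lesssim \cC(\Delta_n/2)\,r_n^{-\gamma} + \text{approximation errors}$, and the scaling $r_n = o(\mu(U_n)/\cC(\Delta_n/2))$ guarantees $N'\cR_1 \to 0$.

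The hard part is the construction of $\psi$ in the invertible setting and the matching of length scales. The polynomial stable contraction~\eqref{e.cont} is slow: to make $\II_{f^{-j}U_n}$ genuinely constant along a typical $\gamma^s$ one needs $j^{-\alpha}$ to beat $r_n$, which explains the discrepancy between the invertible requirement $p' > 2/\alpha$ and the non-invertible requirement $p' > 1/\alpha$ — the invertible case pays an extra factor from the stable-disk approximation. Combining Assumptions~\ref{a.2},~\ref{a.3}, and~\ref{a.4} simultaneously so that the exceptional set stays smaller than $\mu(U_n)$ while the Hölder side is kept within the same order is the delicate quantitative balancing act, and it is precisely what dictates every numerical constraint appearing in the statement.
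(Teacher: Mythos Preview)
Your overall architecture is right --- apply Theorem~\ref{t.2}, verify the compound binomial limit via Remark~\ref{r.2}, and control $N'\cR_1$ and $N'\cR_2$ --- and your treatment of $\cR_1$ is essentially what the paper does. But your plan for $\cR_2$ has a real gap, and relatedly you have swapped the roles of Assumptions~\ref{a.2},~\ref{a.3},~\ref{a.4}.

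You propose to bound $\PP(Z_0\ge 1,\, Z_j\ge 1)$ for $j\ge 2$ by approximating $\II_{\{Z_0\ge 1\}}$ with a Lipschitz function and invoking decay of correlations. That approximation carries a Lipschitz norm $C_K/r_n$, so each summand contributes a term of order $r_n^{-1}\cC((j-1)(2K+1))$ in addition to an approximation error $o(\mu(U_n))$. After summing $j=2,\dots,\Delta$ and multiplying by $N'\sim 1/((2K+1)\mu(U_n))$, you face both $\frac{1}{r_n\mu(U_n)}\sum_j\cC(jK)$ and $\Delta\cdot o(1)$. Neither is controllable under the stated hypotheses: $\cC(k)=o(1/k)$ does not make $\sum_j\cC(jK)$ small enough to beat $1/(r_n\mu(U_n))$, and Assumption~\ref{a.1} gives only $\mu(U^o\setminus U^i)=o(\mu(U_n))$, not $o(\mu(U_n)/\Delta)$. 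The hypothesis $r_n=o(\mu(U_n)/\cC(\Delta_n/2))$ is calibrated for a \emph{single} use of decay of correlations (in $\cR_1$), not $\Delta$ of them. Also, $G(k)$ in Assumption~\ref{a.3} is not a bound on $\PP(Z_0\ge 1,\,Z_j\ge 1)$ for short $j$; it measures the portion of $U_n$ lying in ``bad'' tower cylinders.

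The paper avoids decay of correlations entirely in $\cR_2$. It estimates $\mu(U\cap f^{-j}U)$ directly on the tower: decompose $\Omega_0$ into ``good'' $(l-1)$-cylinders $I'_{\vec i_l}$ (those whose penultimate return has $R_{i_{l-1}}\le s$) and a remainder; use the Gibbs--Markov distortion bound (Lemma~\ref{l.dist}) on the good part to get $\mu(U\cap f^{-j}U)\lesssim \mu(U)\bigl(\mu(U)+\mu(B_{(j/2)^{-\alpha}}(\partial U))\bigr)$, and use Assumption~\ref{a.3} (the $G(k)\le Ck^{-p''}$ bound) to control the remainder. In the invertible case the same argument is run along conditional measures $\mu_{\gamma^u}$, and this is exactly where Assumption~\ref{a.4} enters. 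Conversely, in the paper's $\cR_1$ estimate the stable-disk approximation of $\psi$ uses only the contraction~\eqref{e.cont} together with Assumption~\ref{a.2}; Assumptions~\ref{a.3} and~\ref{a.4} play no role there.
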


The rest of this section is devoted to the proof of this theorem.

We use the same notation as in the last section. For an integer $K$ we write $Z_j=\sum_{i=j(2K+1)}^{(j+1)(2K+1)-1}X_i$, where $X_i=\II_{U_n}\circ f^{i}$. Then we apply Theorem~\ref{t.2} on the sequence $\{X_j\}$ with $N=\tau/\mu(U_n)$, and estimate 
$$
\left|\PP(V_0^{N'}=k)- m(\{k\})\right|\le CN'(\cR_1+\cR_2)+\Delta\mu(U),
$$
where $N'=\tau/((2K+1)\mu(U_n))$, and
\begin{flalign*}
\cR_1=\sup\limits_{\substack{M\in[\Delta,N']\\q\in(0,N'-\Delta)}}\Bigg|\sum_{u=1}^{q-1}\big(\PP(Z_0=u\,\wedge\,&V_\Delta^M=q-u)-\PP(Z_0=u)\PP\left(V_\Delta^M=q-u\right)\big)\Bigg|,&
\end{flalign*}
and
\begin{flalign*}
\cR_2=\sum_{j=1}^{\Delta}\PP&\left(Z_0\ge 1 \,\wedge\,Z_j\ge 1\right).&
\end{flalign*}

\subsection{Estimate $\cR_1$}
To simply notation, we will drop the index in $U_n$ from now on. Write
$$
\cR_1(q,u)= \Big|\big(\PP(Z_0=u\,\wedge\,V_\Delta^M=q-u)-\PP(Z_0=u)\PP\left(V_\Delta^M=q-u\right)\big)\Big|,
$$
which is non-vanishing only if $u\le 2K+1$.

The set $\{Z_0=u\}$ is a disjoint union of the sets
$$
Z_{\vec{v}} = \bigcap_{j=1}^uf^{-v_j}U \cap \bigcap_{i\notin \{v_j\}}f^{-i}U^c,
$$
where $\vec{v}=(v_1,\ldots, v_u)$ with $0\le v_1<\cdots<v_u\le 2K$ marks the $u$ entries to $U$ before time $2K$. Note that for $u\ge 2K+1$ (and possibly for certain $\vec{v}$ with $u\le 2K+1$), $Z_{\vec{v}}$ will be empty.

Recall that $U^i$ and $U^o$ in Assumption~\ref{a.1} are the approximations of $U$ from inside and outside, respectively. This invites us to define
$$
Z_{\vec{v}}^o = \bigcap_{j=1}^uf^{-v_j}U^o \cap \bigcap_{i\notin \{v_j\}}f^{-i}(U^i)^c,
$$
as the approximations of $Z_{\vec{v}}$ from 
outside. 
Clearly one has 
$\overline{ Z_{\vec{v}}}\subset Z^o_{\vec{v}}$ for all vectors $\vec{v}$. Moreover, there are Lipschitz functions 
$\phi^o_{\vec{v}}$ that satisfy
$$
\phi^o_{\vec{v}}(x)=\begin{cases} 
1 & x\in Z_{\vec{v}} \\
0 & x\notin Z^o_{\vec{v}}.
\end{cases}
$$
with Lipschitz constants bounded by $C_K/r_n$ for some constant $K$ depending on $f$ and $K$ (but not on $n,u$ or  $\vec{v}$), with $r_n$ as in Assumption~\ref{a.1}.\footnote{One simple way to construction such functions is to first construct Lipschitz functions on $U^{i/o}$ with norm bounded by $1/r_n$, then iteration them under $f$. Since one only need to iterate no more than $K$ times, the Lipschitz constant is affected by a constant $C_K$.} By the construction, we have
$$
\II_{Z_{\vec{v}}}\le\phi^o_{\vec{v}},
$$
with difference bounded by
$$
\int_\bM\left(\phi^o_{\vec{v}}-\II_{Z_{\vec{v}}}\right)\,d\mu\le \mu(Z^o_{\vec{v}}\setminus Z_{\vec{v}})\le\mu(U^o\setminus U)=o(\mu(U)),
$$
thanks to Assumption~\ref{a.1}.

Then we have
\begin{align*}\numberthis\label{e.zv}
&\Big|\big(\PP(Z_{\vec{v}}\,\wedge\,V_\Delta^M=q-u)-\mu(Z_{\vec{v}})\PP\left(V_\Delta^M=q-u\right)\big)\Big|\\
=&\left|\int_\bM\II_{Z_{\vec{v}}}\cdot\II_{\{V_\Delta^M=q-u\}}\,d\mu - \int_\bM\II_{Z_{\vec{v}}}\,d\mu\int_{\bM}\II_{\{V_\Delta^M=q-u\}}\,d\mu\right|\\
\le& X+Y+Z,
\end{align*}
where 
\begin{align*}
&X=\int_\bM\left(\phi^o_{\vec{v}}-\II_{Z_{\vec{v}}} \right)\II_{\{V_\Delta^M=q-u\}}\,d\mu,\\
&Y=\left|\int_\bM\phi^o_{\vec{v}}\cdot\II_{\{V_\Delta^M=q-u\}}\,d\mu - \int_\bM\phi^o_{\vec{v}}\,d\mu\int_{\bM}\II_{\{V_\Delta^M=q-u\}}\,d\mu\right|,\\
&Z=\int_\bM\left(\phi^o_{\vec{v}}-\II_{Z_{\vec{v}}}\right)d\mu\int_{\bM}\II_{\{V_\Delta^M=q-u\}}\,d\mu.
\end{align*}

Note that 
\begin{equation}\label{e.xz}
X+Z\le 2\int_{\bM}\left(\phi^o_{\vec{v}}-\II_{Z_{\vec{v}}}\right)\,d\mu = o(\mu(U)),
\end{equation}
so we are left to estimate $Y$. One can easily check that the estimates below does not depend on $M,q,u$ or $\vec{v}$.

\noindent Case 1. $f$ is non-invertible. 

In this case, we directly apply the decay of correlations~\eqref{e.decay1} for non-invertible towers to the Lipschitz function $\phi_{\vec{v}}^o$ and $L^\infty$ function $\II_{\{V_\Delta^M=q-u\}}$. This gives
\begin{equation}\label{e.y.ni}
Y\le C\|\phi_{\vec{v}}^o\|_{\Lip}\cC(\Delta) \le \frac{C_K}{r_n}\cC(\Delta).
\end{equation}

\noindent Case 2. $f$ is invertible.

We need to approximate $\II_{\{V_\Delta^M=q-u\}}$ by $L^\infty$ functions that are constant on stable disks. We take any positive integer $\Delta'\le\Delta$ and write, for $k\ge (2K+1)\Delta'$,
$$
S_k(U) =\bigcup_{i}\bigcup_{j=0}^{R_i-1}\bigcup_{\substack{\gamma\in\Gamma^s\\f^{k+j}(\gamma\cap\Omega_{0,i})\cap\partial U\ne\emptyset}}f^j(\gamma) 
$$
for the union of stable disks (and their forward images before returning to $\Omega_0$) whose image under $f^k$ will intersect with the topological boundary of $U$. Note that $f^kS_k(U)$ is a union of $f^{k+j}\gamma$ for $\gamma\in\Gamma^s$. The polynomial contraction along stable disks~\eqref{e.cont} gives
$$\diam(f^{k+j}\gamma)\le C/(k+j)^{\alpha}\le C/k^\alpha.
$$ 
If we write $B_r(\partial U)$ for the $r$-neighborhood of $\partial U$, then the observation above yields
$$
f^kS_k(U)\subset B_{C/k^{\alpha}}(\partial U).
$$
As a result, we get by the invariance of $\mu$,
$$
\mu(S_k(U))\le \mu(B_{C/k^{\alpha}}(\partial U)).
$$
Now we define (and suppress the dependence on $q,u,n,\Delta$ and $M$ for simplicity):
$$
\tilde S=\bigcup_{k=(2K+1)\Delta'}^{(2K+1)(M+\Delta'-\Delta)}S_k(U).
$$

Consider the $L^\infty$ function 
$$
\psi=\II_{\{V_{\Delta'}^{M+\Delta'-\Delta}=q-u\}}\cdot \II_{\tilde S^c}.
$$
We see that $\psi$ is constant on stable disks, since if $x\in\{V_{\Delta'}^{M+\Delta'-\Delta}=q-u\}\cap \tilde S^c$ hits $U$ under the $j$th iteration of $f$ for $j\in[(2K+1)\Delta', (2K+1)(M+\Delta'-\Delta)]$, then the entire stable disk at $x$ will be contained in $U$ under the same iteration.  Meanwhile, we can easily estimate the $L^1$ norm of the difference between $\psi$ and $\II_{\{V_{\Delta'}^{M+\Delta'-\Delta}=q-u\}}$:
\begin{align*}
\int_\bM\left|\psi- \II_{\{V_{\Delta'}^{M+\Delta'-\Delta}=q-u\}}\right|d\mu=&
\int_{\bM}1-\II_{\tilde S^c}\,d\mu\\=\,&\mu(\tilde{S})\\
\le&\sum_{k=(2K+1)\Delta'}^{(2K+1)(M+\Delta'-\Delta)}\mu(S_k(U))\\\le& \sum_{k\ge(2K+1)\Delta'}\mu(B_{C/k^{\alpha}}(\partial U)).
\end{align*}

The term $Y$ can now be estimated as
\begin{align*}
Y=&\left|\int_\bM\phi^o_{\vec{v}}\cdot\II_{\{V_{\Delta'}^{M+\Delta'-\Delta}=q-u\}}\circ f^{\Delta-\Delta'}\,d\mu - \int_\bM\phi^o_{\vec{v}}\,d\mu\int_{\bM}\II_{\{V_{\Delta'}^{M+\Delta'-\Delta}=q-u\}}\,d\mu\right|\\
\le &\left|\int_\bM\phi^o_{\vec{v}}\cdot\psi\circ f^{\Delta-\Delta'}\,d\mu - \int_\bM\phi^o_{\vec{v}}\,d\mu\int_{\bM}\psi\,d\mu\right|\\
&+2\sum_{k\ge(2K+1)\Delta'}\mu(B_{C/k^{\alpha}}(\partial U))\\
\le &\frac{C_K}{r_n}\cC(\Delta-\Delta')+2\sum_{k\ge(2K+1)\Delta'}\mu(B_{C/k^{\alpha}}(\partial U))\numberthis\label{e.y}
\end{align*}
for any $0\le\Delta'\le\Delta<N'$.

Collect~\eqref{e.zv},~\eqref{e.xz} and~\eqref{e.y} (or~\eqref{e.y.ni} in the non-invertible case) and sum over $u$ and $\vec{v}$, we get (recall that we are only interested in the case $u\le 2K+1$, since otherwise $\{Z_0=u\}$ will be empty; therefore the total number of summands is bounded by a constant that depends on $K$):
\begin{align*}
\cR_1\le& \sup_{q,M}\sum_{u=1}^{q-1}\sum_{\substack{\vec{v}=(v_1,\ldots, v_u),\\0\le v_1<\cdots<v_u\le 2K}} \Big|\big(\PP(Z_{\vec{v}}\,\wedge\,V_\Delta^M=q-u)-\mu(Z_{\vec{v}})\PP\left(V_\Delta^M=q-u\right)\big)\Big|\\
\le& C_K'\Bigg(o(\mu(U_n))+\frac{1}{r_n}\cC(\Delta-\Delta')+\sum_{k\ge(2K+1)\Delta'}\mu(B_{C/k^{\alpha}}(\partial U_n))\Bigg), \numberthis\label{e.r1}
\end{align*}
where $C_K'$ is a constant that does not depend on $\Delta, \Delta'$ or $U$. The last term does not show up when $f$ is non-invertible.

\subsection{Estimate $\cR_2$}\label{s.6.2}
We use a strategy similar to the proof of Theorem~\ref{m.4}. Recall that $\mu_0$ is the measure supported on $\Omega_0$ and is invariant under $T=f^R$, such that $\mu$ is the lift of $\mu_0$ given by
$$
\mu(B) = \sum_{i=1}^\infty\sum_{k=0}^{R_i-1}\mu_0(f^{-k}(B)\cap \Omega_{0,i}).
$$
In particular, we have
\begin{align*}
\mu (U\cap f^{-j}(U))=&\sum_{i=1}^\infty\sum_{k=0}^{R_i-1}\mu_0(\Omega_{0,i}\cap f^{-k}U\cap f^{-(k+j)}U).
\end{align*}

\subsubsection{Case 1. $f$ is non-invertible}
Recall that $T=f^R$ is the induced map on $\Omega_0$. For each vector $\vec{i}_l = (i_1,i_2,\ldots,i_{l})\in\NN^l$, we define the {\em $l$-cylinder} $I_{\vec{i}_l}$ to be 
$$
I_{\vec{i}_l} = \Omega_{0,i_1}\cap T^{-1}\Omega_{0,i_2}\cap\cdots\cap T^{-l}\Omega_{0,i_{l}}
$$

We are particularly interested in those cylinders $I_{\vec{i}_l}$ where the second last visited partition element $\Omega_{0,i_{l-1}}$ has a short return time $R_{i_{l-1}}$. To be more precise, for each integer $s>0$, we define the collection of `good' cylinders to be 
$$
\cI^{\cG}_s=\{I_{\vec{i}_l}: R_{i_{l-1}}\le s\}.
$$
$\cI^{\cG}_s$ consists of all $l$-cylinders where the travel time from $\Omega_{0,i_{l-1}}$ to $\Omega_{0,i_{l}}$ is less than $s$.
We also write, for each $k$ large enough, the collection of `good' cylinders whose length (under the iteration of $f$) is around $k$: 
$$
\cI^\cG_s(k) = \{I_{\vec{i}_l}\in \cI^\cG_s: \sum_{j=1}^{l-1}R_{i_j}\le k < \sum_{j=1}^{l} R_{i_j}\}.
$$
For each vector $\vec{i}_l = (i_1,\ldots,i_l)$ such that $I_{\vec{i}_l}\in\cI^\cG_s(k+j)$, we write 
$$\vec{i}'_l = (i_1,\ldots,i_{l-1}),$$
i.e., we drop the last component from $\vec{i}_l$. Then the $(l-1)$-cylinder $I_{\vec{i}_l'}$ contains $I_{\vec{i}_l}$, and satisfies 
\begin{enumerate}
	\item $R_{i_{l-1}}<s$;
	\item $\sum_{j=1}^{l-1}R_{i_j}\le k+j < \sum_{j=1}^{l} R_{i_j}$.
\end{enumerate}
For given $k<R_i$, $j>K_0$ and $s=s(k+j)$ given by Assumption~\ref{a.3}, we denote by $\tilde{\Omega}_i$ be the union of all the `good' $(l-1)$-cylinders in $\Omega_{0,i}$:
\begin{equation}\label{e.tilde}
\tilde{\Omega}_i = \bigcup_{I_{\vec{i}_l}\in\cI^\cG_s(k+j),I_{\vec{i}_l}\subset \Omega_{0,i} } I'_{\vec{i}_l}
\end{equation}

The next lemma is similar to the distortion estimate in the proof of Theorem~\ref{m.4}.

\begin{lemma}\label{l.dist} We have
$$
\mu_0(\tilde{\Omega}_i\cap f^{-k}U\cap f^{-(k+j)}U)\lesssim\mu(U) \mu_0 \left(\bigcup_{\substack{I'_{\vec{i}_l}:I_{\vec{i}_l}\in\cI^\cG_{k+j},I'_{\vec{i}_l}\cap \Omega_{0,i}\cap f^{-k}U\ne\emptyset }} I'_{\vec{i}_l}\right).
$$
\end{lemma}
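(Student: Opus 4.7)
The plan is to use the Markov property of the induced map $T=f^R$ together with Gibbs--Markov distortion to reduce the estimate on each constituent cylinder to an application of the $f$-invariance of $\mu$.

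First I would reorganize the union defining $\tilde{\Omega}_i$ by passing to its maximal (with respect to inclusion) cylinders, call them $\{I'_a\}_a$. Since any two cylinders of possibly different lengths are either disjoint or nested, the maximal cylinders form a pairwise disjoint family with $\tilde{\Omega}_i=\bigsqcup_a I'_a$. Moreover, any non-maximal cylinder in the collection that meets $f^{-k}U$ is contained in a maximal one that also meets $f^{-k}U$, so the union appearing on the right-hand side of the lemma coincides with the disjoint union of those $I'_a$ satisfying $I'_a\cap f^{-k}U\ne\emptyset$.

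Next, fix a maximal cylinder $I'_{\vec{i}_l}$ in the family and set $m:=\sum_{q=1}^{l-1}R_{i_q}$. By the Markov property, $f^m=T^{l-1}$ maps $I'_{\vec{i}_l}$ bijectively onto $\Omega_0$, and iterating the H\"older distortion estimate~(v) of Section~\ref{ss.3.3} along the cylinder provides a uniform bound on the oscillation of $\log JT^{l-1}$ on $I'_{\vec{i}_l}$. Together with the fact that $\mu_0(\Omega_0)=\mu(\Omega_0)$ is a positive constant, this yields the transfer inequality
$$\mu_0(A)\,\lesssim\,\mu_0(I'_{\vec{i}_l})\,\mu_0\bigl(T^{l-1}(A)\bigr)\qquad\text{for any measurable }A\subset I'_{\vec{i}_l}.$$
Applying this with $A=I'_{\vec{i}_l}\cap f^{-k}U\cap f^{-(k+j)}U$, the inequality $m\le k+j$ built into the definition of $\cI^\cG_s(k+j)$ gives $T^{l-1}(A)\subset\Omega_0\cap f^{-(k+j-m)}U$, so by $\mu_0\le\mu$ and $f$-invariance of $\mu$,
$$\mu_0\bigl(T^{l-1}(A)\bigr)\,\le\,\mu\bigl(f^{-(k+j-m)}U\bigr)\,=\,\mu(U).$$

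Summing the resulting per-cylinder bound $\mu_0(I'_a\cap f^{-k}U\cap f^{-(k+j)}U)\lesssim\mu_0(I'_a)\mu(U)$ over the disjoint maximal family, discarding those $I'_a$ disjoint from $f^{-k}U$ (which contribute zero), and invoking the absorption observation of the first paragraph, I obtain
$$\mu_0\bigl(\tilde{\Omega}_i\cap f^{-k}U\cap f^{-(k+j)}U\bigr)\,\lesssim\,\mu(U)\,\mu_0\Biggl(\bigcup_{\substack{I'_{\vec{i}_l}:\,I_{\vec{i}_l}\in\cI^\cG_s(k+j)\\ I'_{\vec{i}_l}\cap\Omega_{0,i}\cap f^{-k}U\ne\emptyset}} I'_{\vec{i}_l}\Biggr),$$
which is the right-hand side of the lemma. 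The main subtle point is the disjointness/absorption step of the first paragraph; it is needed because a given $x$ can a priori satisfy the defining conditions of the collection at several levels $l$ simultaneously (by choosing the last component $i_l$ with different return times), and only the collapse to maximal cylinders reconciles this with the clean disjoint sum. I would also note that the condition $R_{i_{l-1}}\le s$ in the definition of $\cI^\cG_s$ plays no role in the per-cylinder estimate itself; it enters only later, through Assumption~\ref{a.3}, to control the total tail $G(k)$.
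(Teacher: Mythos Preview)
Your proof is correct and follows essentially the same route as the paper: decompose $\tilde{\Omega}_i$ into the $(l-1)$-cylinders $I'_{\vec{i}_l}$, apply the Gibbs--Markov distortion bound to push the measure forward by $T^{l-1}$ onto $\Omega_0$, and then use $\mu_0\lesssim\mu$ together with $f$-invariance to bound $\mu_0(f^{-b}U)$ by $\mu(U)$. In fact you are more careful than the paper on one point: the paper sums over all $I'_{\vec{i}_l}$ and then passes tacitly from $\sum\mu_0(I'_{\vec{i}_l})$ to $\mu_0\bigl(\bigcup I'_{\vec{i}_l}\bigr)$, whereas you observe that distinct $I'_{\vec{i}_l}$ can be nested (e.g.\ when $R_{i_l}$ is large) and resolve this by reducing to the maximal cylinders first, which is the clean way to justify that step.
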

\begin{proof}
We have
\begin{align*}
&\mu_0(\tilde{\Omega}_{i}\cap f^{-k}U\cap f^{-(k+j)}U)\\
\le&\sum_{\substack{I'_{\vec{i}_l}:I_{\vec{i}_l}\in\cI^\cG_{k+j},I'_{\vec{i}_l}\cap \Omega_{0,i}\cap f^{-k}U\ne\emptyset }}\mu_0(f^{-(k+j)}U\cap I'_{\vec{i}_l})  \\
=&	\sum_{\substack{I'_{\vec{i}_l}:I_{\vec{i}_l}\in\cI^\cG_{k+j},I'_{\vec{i}_l}\cap \Omega_{0,i}\cap f^{-k}U\ne\emptyset }}\frac{\mu_0(f^{-(k+j)}U\cap I'_{\vec{i}_l})}{\mu_0(I'_{\vec{i}_l})} \mu_0(I'_{\vec{i}_l})  \\
\lesssim& \sum_{\substack{I'_{\vec{i}_l}:I_{\vec{i}_l}\in\cI^\cG_{k+j},I'_{\vec{i}_l}\cap \Omega_{0,i}\cap f^{-k}U\ne\emptyset }}\frac{\mu_0\left(T^{l-1}\left(f^{-(k+j)}U\cap I'_{\vec{i}_l}\right)\right)}{\mu_0(T^{l-1}I'_{\vec{i}_l})} \mu_0(I'_{\vec{i}_l}) ,
\end{align*}
where we used the distortion estimate on the last inequality.

Note that the denominator satisfies $\mu_0(T^{l-1}I'_{\vec{i}_l}) = \mu_{0}(\Omega_0)=1$. For the numerator, we write, with $b= k+j-\sum_{m=1}^{l-1} R_{i_m}\in [0,s]\cap\NN$,
\begin{align*}
\mu_0\left(T^{l-1}\left(f^{-(k+j)}U\cap I'_{\vec{i}_l}\right) \right)\le&\, \mu_0\left(T^{l-1}\left(f^{-(k+j)}U\right)\right)\\
=& \mu_0\left(f^{\sum_{m=1}^{l-1} R_{i_m}-(k+j)}U\right)\\
=& \mu_0(f^{-b}U)\\
\le & C_0 \mu(f^{-b}U) = C_0 \mu(U)
\end{align*}
for some constant $C_0>0$ independent of $b$.\footnote{The existence of such $C_0$ follows from the facts that $\mu_0 = \mu|_{\Omega_0}$ and $\mu_0(R)<\infty$.}

Now we conclude that
$$
\mu_0(\tilde{\Omega}_{i}\cap f^{-k}U\cap f^{-(k+j)}U)\lesssim\mu(U) \mu_0 \left(\bigcup_{\substack{I'_{\vec{i}_l}:I_{\vec{i}_l}\in\cI^\cG_{k+j},I'_{\vec{i}_l}\cap \Omega_{0,i}\cap f^{-k}U\ne\emptyset }} I'_{\vec{i}_l}\right).
$$
\end{proof}


Note that if $I'_{\vec{i}_l}$ is a `good' $(l-1)$-cylinder that has intersection with $f^{-k}U$, then the backward contraction along unstable disks~\eqref{e.exp} gives $$\diam( I'_{\vec{i}_l})\lesssim (j-s)^{-\alpha}.$$
As a result, such cylinders must be contained in the $(j-s)^{-\alpha}$-neighborhood of $\partial U$.
This together with the previous lemma and Assumption~\ref{a.3} give:

\begin{align*}
&\mu (U\cap f^{-j}(U))\\\le &\sum_{i}\sum_{k=0}^{R_i-1}\mu_0(\tilde{\Omega}_i\cap f^{-k}U\cap f^{-(k+j)}U)+\sum_i\sum\limits_{k=0}^{R_i}\mu_0(f^{-k}U_n\cap (\Omega_{0,i}\setminus \tilde{\Omega}_i))\\
\lesssim&\mu(U)\sum_{i}\sum_{k=0}^{R_i-1}\sum_{ \substack{I'_{\vec{i}_l}:I_{\vec{i}_l}\in\cI^\cG_{k+j},I'_{\vec{i}_l}\cap \Omega_{0,i}\cap f^{-k}U\ne\emptyset }}
\mu_0(I'_{\vec{i}_l})+G(j)\mu(U)\\
\le &\mu(U)\left(\mu\left(U\cup B_{(j/2)^{-\alpha}}(\partial U)\right)+k^{-p''}\right)\\
\lesssim &\mu(U)\left(\mu(U)+j^{-\alpha p'}+j^{-p''}\right)
\end{align*}

The rest of the proof follows the lines of the proof of Theorem~\ref{m.4}, with $\kappa_n$ replaced by $\Delta$. We obtain, for any $K'< K$,
\begin{align*}
\cR_2\lesssim (2K+1)\mu(U)\left(\Delta K\mu(U)+K^{-\min\{\alpha p',p''\}+1}+K\mu(U_n)+(K')^{-(p'-1)}+\frac{K'}{K}\right).
\end{align*}

\subsubsection{Case 2. $f$ is invertible}
We define the cylinders $T_{\vec{i}_l}$ and the collection of `good' cylinders $\cI^\cG_{s}(k)$ in the same way as before. We will estimate each set $\Omega_{0,i}\cap f^{-k}U\cap f^{-(k+j)}U$ using the conditional measures of $\mu_0$. 

Recall that $\mu_\gamma$ are the conditional measures of $\mu_0$ for $\gamma\in\Gamma^u$. Similar to the proof of Lemma~\ref{l.dist}, we have
\begin{align*}
&\mu_\gamma(\tilde{\Omega}_{i}\cap f^{-k}U\cap f^{-(k+j)}U)\\
\le&\sum_{\substack{I'_{\vec{i}_l}:I_{\vec{i}_l}\in\cI^\cG_{k+j},I'_{\vec{i}_l}\cap \Omega_{0,i}\cap f^{-k}U\ne\emptyset }}\mu_\gamma(f^{-(k+j)}U\cap I'_{\vec{i}_l})  \\
=&	\sum_{\substack{I'_{\vec{i}_l}:I_{\vec{i}_l}\in\cI^\cG_{k+j},I'_{\vec{i}_l}\cap \Omega_{0,i}\cap f^{-k}U\ne\emptyset }}\frac{\mu_\gamma(f^{-(k+j)}U\cap I'_{\vec{i}_l})}{\mu_\gamma(I'_{\vec{i}_l})} \mu_\gamma(I'_{\vec{i}_l})  \\
\lesssim& \sum_{\substack{I'_{\vec{i}_l}:I_{\vec{i}_l}\in\cI^\cG_{k+j},I'_{\vec{i}_l}\cap \Omega_{0,i}\cap f^{-k}U\ne\emptyset }}\frac{\mu_{\tilde{\gamma}}\left(T^{l-1}\left(f^{-(k+j)}U\cap I'_{\vec{i}_l}\right)\right)}{\mu_{\tilde{\gamma}}(T^{l-1}I'_{\vec{i}_l})} \mu_\gamma(I'_{\vec{i}_l}) ,
\end{align*}
where $\tilde{\gamma} = \tilde{\gamma}(I'_{\vec{i}_l}) = \gamma(T^{l-1}x)$ for $x\in\gamma\cap I'_{\vec{i}_l}$. As before, the denominator is bounded from above, and the numerator satisfies
$$
\mu_{\tilde{\gamma}}\left(T^{l-1}\left(f^{-(k+j)}U\cap I'_{\vec{i}_l}\right)\right)\le \mu_{\tilde{\gamma}}(f^{-b}U)\lesssim \mu(U)
$$
with $b= k+j-\sum_{m=1}^{l-1} R_{i_m}\in [0,s]\cap\NN$, and the last inequality follows from Assumption~\ref{a.4}. It then follows that (with $m(\gamma)$ the transverse measure): 
\begin{align*}
&\mu (U\cap f^{-j}(U))\\\le& \int\sum_{i}\sum_{k=0}^{R_i-1}\mu_\gamma(\tilde{\Omega}_i\cap f^{-k}U\cap f^{-(k+j)}U)\,dm(\gamma)+\sum_i\sum\limits_{k=0}^{R_i}\mu_0(f^{-k}U_n\cap (\Omega_{0,i}\setminus \tilde{\Omega}_i))\\
\lesssim&\mu(U)\int\sum_{i}\sum_{k=0}^{R_i-1}\sum_{ \substack{I'_{\vec{i}_l}:I_{\vec{i}_l}\in\cI^\cG_{k+j},I'_{\vec{i}_l}\cap \Omega_{0,i}\cap f^{-k}U\ne\emptyset }}
\mu_0(I'_{\vec{i}_l})\,dm(\gamma)+G(j)\mu(U)\\
\le &\mu(U)\left(\int\mu_\gamma\left(U\cup B_{(j/2)^{-\alpha}}(\partial U)\right)\,dm(\gamma)+k^{-p''}\right)\\
= &\mu(U)\left(\mu\left(U\cup B_{(j/2)^{-\alpha}}(\partial U)\right)+k^{-p''}\right)\\
\lesssim &\mu(U)\left(\mu(U)+j^{-\alpha p'}+j^{-p''}\right).
\end{align*}

\subsection{Collect the estimates}
In the non-invertible case, we have 
\begin{align*}
&\left|\PP(V_0^{N'}=k)- m(\{k\})\right|\le CN'(\cR_1+\cR_2)+\Delta\mu(U)\\
\lesssim& Ko(1)+\frac{C_K\cC(\Delta)}{Kr_n\mu(U_n)}+\Delta K\mu(U_n)+K^{-\min\{\alpha p',p''\}+1}+K\mu(U_n)\\&+(K')^{-\min\{\alpha p',p''\}+1}+\frac{K'}{K}+\Delta\mu(U_n).
\end{align*}

Sending $n$ to infinity then $K$ to infinity with  $K'=\sqrt{K}$, we see that the error term goes to zero as $n$ goes to infinity, provided that 
$$
r_n = o\left(\frac{\mu(U)}{\cC(\Delta/2)}\right).
$$
This will also make the compound binomial distribution converging to the compound Poisson distribution, following Remark~\ref{r.2}. This finishes the proof of Theorem~\ref{m.5} in the non-invertible case.

In the invertible case,
\begin{align*}
&\left|\PP(V_0^{N'}=k)- m(\{k\})\right|\le CN'(\cR_1+\cR_2)+\Delta\mu(U)\\
\lesssim& C_K\left(o(1)+\frac{\cC(\Delta)}{r_n\mu(U_n)}+\frac{1}{\mu(U_n)}\sum_{k\ge(2K+1)\Delta'}\mu(B_{C/k^{\alpha}}(\partial U_n))\right)+\Delta K\mu(U_n)\\&+K^{-\min\{\alpha p',p''\}+1}+K\mu(U_n)+(K')^{-\min\{\alpha p',p''\}+1}+\frac{K'}{K}+\Delta\mu(U_n).
\end{align*}

The third term is estimated by Assumption~\ref{a.2}. We take $\Delta = \mu(U)^{-1+\varepsilon}$ for $\varepsilon>0$ small enough, then
\begin{align*}
&\frac{1}{\mu(U_n)}\sum_{k\ge(2K+1)\Delta'}\mu(B_{C/k^{\alpha}}(\partial U_n))\\
\lesssim &\frac{1}{\mu(U_n)}\sum_{k\ge(2K+1)\Delta'}k^{-\alpha p'} \\
\lesssim &C_K\mu(U_n)^{-1}\Delta^{-(\alpha p'-1)}\\
=&C_K\mu(U_n)^{(1-\varepsilon)(\alpha p'-1)-1},
\end{align*}
which vanishes as long as $\alpha p'>2$ and $\varepsilon$ is taken small enough. We conclude the proof of Theorem~\ref{m.5} in the invertible case, and Theorem~\ref{m.2},~\ref{m.1} follows.

\subsection{Synchronizing $K$ and $n$}
The following proposition is a by-product from the proof of the previous theorem, which states that $\beta_\ell$ defined in~\eqref{e.beta} by synchronizing $K$ and $n$ is, in fact, the same as $\hat{\alpha}_\ell$. 

\begin{proposition}\label{p.2}
Under the assumptions of Theorem~\ref{m.5}, for any increasing sequence  $\{s_n\}$ with $s_n\to\infty$ and $s_n\mu(U_n)\to 0$, the sequence $\{\beta_\ell\}$ defined by~\eqref{e.beta} exists and satisfies
$
\beta_\ell=\hat{\alpha}_\ell
$
for all $\ell \ge 1$.
\end{proposition}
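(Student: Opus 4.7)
The plan is to mirror the proof of Proposition~\ref{p.1}, replacing the $\phi$-mixing machinery used there by the tower-based two-point estimate already established in Section~\ref{s.6.2}. The case $\ell=1$ is vacuous since $\tau^0_{U_n}=0$ by convention, so fix $\ell\ge 2$. The starting observation is that whenever $2K+1\le s_n$,
\begin{align*}
\bigl|\mu_{U_n}(\tau^{\ell-1}_{U_n}\le s_n)-\mu_{U_n}(\tau^{\ell-1}_{U_n}\le 2K)\bigr|
&=\mu_{U_n}(2K<\tau^{\ell-1}_{U_n}\le s_n)\\
&\le \mu_{U_n}\Bigl(\textstyle\sum_{i=2K+1}^{s_n}\II_{U_n}\circ f^i\ge 1\Bigr)\\
&\le \frac{1}{\mu(U_n)}\sum_{i=2K+1}^{s_n}\mu\bigl(U_n\cap f^{-i}U_n\bigr),
\end{align*}
by invariance of $\mu$. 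The whole task is therefore to control this last sum.

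Unlike the Gibbs--Markov setting of Proposition~\ref{p.1}, here I would not split the sum into ``short'' and ``long'' ranges; the work in Section~\ref{s.6.2} already produces a single estimate that is valid across the entire window. Specifically, in both the invertible and non-invertible cases that proof yields
$$
\mu\bigl(U_n\cap f^{-i}U_n\bigr)\;\lesssim\;\mu(U_n)\bigl(\mu(U_n)+i^{-\alpha p'}+i^{-p''}\bigr)
$$
for all $i$ in the regime where Assumption~\ref{a.3} (and Assumption~\ref{a.4} in the invertible case) applies, namely $i\le w_n$. Since $s_n\mu(U_n)\to 0$ while $w_n\mu(U_n)\to\tau>0$, we have $s_n\le w_n$ eventually, so the bound applies uniformly for every $i\in[2K+1,s_n]$.

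Substituting into the previous display and summing, one gets
$$
\bigl|\mu_{U_n}(\tau^{\ell-1}_{U_n}\le s_n)-\mu_{U_n}(\tau^{\ell-1}_{U_n}\le 2K)\bigr|
\;\lesssim\; s_n\mu(U_n)+K^{1-\alpha p'}+K^{1-p''}.
$$
The first summand vanishes as $n\to\infty$ by the hypothesis on $\{s_n\}$; the remaining two vanish as $K\to\infty$ because the assumptions of Theorem~\ref{m.5} force $\alpha p'>1$ and $p''>1$. Taking $\lim_{n\to\infty}$ followed by $\lim_{K\to\infty}$, and using that $\hat\alpha_\ell(2K)\to\hat\alpha_\ell$ by~\eqref{e.3.0}, one concludes that the limit defining $\beta_\ell$ exists and equals $\hat\alpha_\ell$.

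The substantive content has all been spent in Theorem~\ref{m.5}; the only points that require care in writing this up are (i) verifying that the constants implicit in the Section~\ref{s.6.2} bound are uniform in $i$ across the whole range $[2K+1,s_n]$ (they are, since that argument treats $i$ as a free parameter and all constants depend only on the tower and on $p',p''$), and (ii) reassuring the reader that in the invertible case Assumption~\ref{a.4}, which was stated only for $b\le s(1/\mu(U_n))$, remains usable here because the ``gap'' $b$ produced by the cylinder argument is bounded by $s(i)$ with $i\le s_n\ll 1/\mu(U_n)$. With these easy checks in place the proposition follows directly, and as noted after Proposition~\ref{p.1} this confirms in particular that the extremal index $\theta$ of~\cite{FFRS} coincides with the $\alpha_1$ defined in~\eqref{e.4}.
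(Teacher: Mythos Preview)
Your argument is correct and matches the approach the paper indicates: replace the $\phi$-mixing estimate of Proposition~\ref{p.1} by the tower-based two-point bound $\mu(U_n\cap f^{-i}U_n)\lesssim\mu(U_n)(\mu(U_n)+i^{-\alpha p'}+i^{-p''})$ from Section~\ref{s.6.2}, and sum. Your observation that no short/long split is needed here---because the sum begins at $i=2K+1$, which exceeds $K_0$ once $K$ is large---is a clean simplification of what a literal transcription of the $\cR_2$ argument (which starts at $j=1$ and therefore introduces the auxiliary $K'$) would give.
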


The proof follows the estimate on $\cR_2$ and is extremely similar to the proof of Proposition~\ref{p.1}, therefore will be omitted.

\subsection{Proof of Corollary~\ref{c.1}}
Assume that $\pi_{\ess}(U_n)\to\infty$. By Lemma~\ref{l.3.1} we have $\hat{\alpha}_1 = 1$ and $\hat{\alpha}_\ell =0$ for all $\ell\ge 2$. Then Theorem~\ref{t.1} gives
$$
\alpha_1 = \hat{\alpha}_1 - \hat\alpha_{2} = 1,\mbox{ and } \alpha_\ell = 0\mbox{ for all }\ell \ge 2.
$$
As a result, we have 
$$
\lambda_1 = \frac{\alpha_1 - \alpha_2}{\alpha_1} = 1,\mbox{ and } \lambda_\ell = 0\mbox{ for all }\ell \ge 2.
$$

In view of Theorem~\ref{m.3}, we only need to show that the entry times distribution $\zeta_{U_n}$ converges to a Poisson distribution with parameter $\tau>0$, where $\tau$ is given by~\eqref{e.1}. For this purpose, we apply Theorem~\ref{m.5} (or Theorem~\ref{m.4} and Theorem~\ref{t.4} when the tower is defined using the first return map). In this case, the compound part is a trivial distribution with $\PP(Z_j=1) = 1$. Then the compound Poisson distribution reduces to a Poisson distribution with parameter $\alpha_{1}\tau = \tau$. 

To finish the proof, we state a general proposition regarding the periodic of $U_n$. Note that the proof does not require the system to be measure preserving, and $\Lambda$ need not have zero measure.

\begin{proposition}\label{p.3}
Let $f$ be a continuous map on the compact metric space $\bM$, and $\{U_n\}$ a nested sequence of sets (need not be open), such that $\cap_nU_n = \cap_n \overline{U_n}$.  Then $\pi(U_n) \to\infty$ if and only if $\Lambda=\cap_nU_n $ does not contain periodic point and is contained in a fundamental domain of $f$, i.e., $\Lambda$ intersects every orbit at most once.
\end{proposition}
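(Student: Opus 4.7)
The plan is to prove the two implications separately; the forward direction is essentially immediate from the fact that $\Lambda\subset U_n$ for every $n$, while the reverse requires a compactness argument that is exactly where the closure hypothesis $\bigcap_n U_n = \bigcap_n \overline{U_n}$ enters.

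For the ``only if'' direction, I will argue by contrapositive. Suppose first that some $x\in\Lambda$ is periodic of period $p$. Then $x\in U_n$ and $f^p(x)=x\in U_n$ for every $n$, which forces $\pi(U_n)\le p$ for all $n$ and contradicts $\pi(U_n)\to\infty$. Similarly, if $\Lambda$ meets some orbit in two distinct points, say $x$ and $f^j(x)$ with $j\ge 1$, then both lie in every $U_n$ and the same pair witnesses $\pi(U_n)\le j$ for all $n$, again a contradiction. No use of the topology or continuity is needed here.

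For the ``if'' direction I will argue by contradiction: assume $\pi(U_n)\not\to\infty$, so there is a fixed integer $K$ and a subsequence $n_k\to\infty$ with $\pi(U_{n_k})\le K$. Pick witnesses $x_k\in U_{n_k}$ and $1\le j_k\le K$ with $f^{j_k}(x_k)\in U_{n_k}$, then by pigeonhole pass to a further subsequence on which $j_k=j$ is constant, and by compactness of $\bM$ to one on which $x_k\to x$. Continuity of $f$ gives $f^j(x_k)\to f^j(x)$. The key observation is that for each fixed $m$, $x_k\in U_{n_k}\subset U_m$ for $k$ large, so $x\in\overline{U_m}$, and hence
\[
x\in\bigcap_m \overline{U_m}=\bigcap_m U_m=\Lambda,
\]
where the closure hypothesis is used in the second equality. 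The same argument applied to $f^j(x_k)$ places $f^j(x)$ in $\Lambda$. If $f^j(x)=x$ then $x\in\Lambda$ is periodic, contradicting the first hypothesis on $\Lambda$; if $f^j(x)\ne x$ then $\Lambda$ contains two distinct points on a common orbit, contradicting the fundamental-domain hypothesis. Either way we reach the desired contradiction.

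The only nontrivial step is the passage from $x_k\to x$ to $x\in\Lambda$, which is precisely what the assumption $\bigcap_n U_n = \bigcap_n \overline{U_n}$ is there to permit; without it one could only conclude $x\in\bigcap_n\overline{U_n}$, which may be strictly larger than $\Lambda$ and could contain periodic or orbit-equivalent points extraneous to $\Lambda$. Beyond that, the proof is essentially a compactness-plus-pigeonhole exercise, so I do not anticipate any serious obstacle.
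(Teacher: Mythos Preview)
Your proof is correct and follows essentially the same approach as the paper's: contrapositive for the forward direction, and a compactness-plus-continuity argument for the reverse, with the closure hypothesis used exactly where you use it. The only cosmetic difference is that the paper exploits the monotonicity of $\pi(\cdot)$ (since $U_{n+1}\subset U_n$ forces $\pi(U_{n+1})\ge\pi(U_n)$) to conclude that a non-divergent $\pi(U_n)$ actually stabilises at some finite $N$, whereas you extract a constant $j$ by pigeonhole on a bounded subsequence; both routes arrive at the same limit point in $\Lambda$.
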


\begin{proof}
We first prove the `only if' part. Assume there exists a point $x$ such that $\Lambda\cap\Orb(x)$ contains two points $y$ and $y'$ (if $y=y'$ then we are in the periodic point case). Without loss of generality, we take $k>0$ such that $y' = f^k (y)$. Then we have
$\pi(U_n)\le k$ for every $n$ since $y\in U_n\cap f^{-k} U_n,$ a contradiction.

For the `if' part, we prove by contradiction. First, observe that $\pi(\cdot)$ is monotonic, i.e., $\pi(U)\ge \pi(V)$ if $U\subset V$. Therefore, if the sequence $\pi(U_n)$ does not go to infinity, it must remain bounded, thus has to converge to a finite number $N$. 

It then follows that for each $n$ large enough, there exists $x_n\in U_n$ such that $f^N(x_n)\in U_n$. Take a subsequence if necessary, we may assume that $x_n\to x$. Note that for each $n$, we have $x\in \overline{U_n}$. This shows that $x\in\Lambda=\cap_n\overline{U_n}$. Since $f$ is continuous, $f^N(x_n)\to f^N(x)$, which must be contained in $\Lambda$ according to the same argument. Then either $\Lambda\cap\Orb(x)$ contains at least two points, or $x = f^N(x)$, which means $x$ is a periodic points; both cases contradict with the assumption that $\Lambda$ does not contain periodic points and is contained in a fundamental domain of $f$.
 
\end{proof}
Note that in our setting, the condition $\cap_nU_n = \cap_n \overline{U_n}$ holds from the construction. Now the final statement of Corollary~\ref{c.1} follows from the observation that if $\Lambda$ is contained in a fundamental domain without periodic point, then $\pi(U_n)\to\infty$, which means $\pi_{\ess}(U_n)\to\infty$. We conclude the proof of Corollary~\ref{c.1}.

\end{document}